\numberwithin{equation}{section}
\numberwithin{figure}{section}
\theoremstyle{plain}
\newtheorem{thm}{\protect\theoremname}[section]
\theoremstyle{definition}
\newtheorem{defn}[thm]{\protect\definitionname}
\theoremstyle{remark}
\newtheorem{rem}[thm]{\protect\remarkname}
\theoremstyle{plain}
\newtheorem{assumption}[thm]{\protect\assumptionname}
\theoremstyle{plain}
\newtheorem{prop}[thm]{\protect\propositionname}
\theoremstyle{plain}
\newtheorem{lem}[thm]{\protect\lemmaname}
\theoremstyle{plain}
\newtheorem{cor}[thm]{\protect\corollaryname}
\providecommand{\assumptionname}{Assumption}
\providecommand{\corollaryname}{Corollary}
\providecommand{\definitionname}{Definition}
\providecommand{\lemmaname}{Lemma}
\providecommand{\propositionname}{Proposition}
\providecommand{\remarkname}{Remark}
\providecommand{\theoremname}{Theorem}
\begin{document}
\title{Well-posedness of the obstacle problem for stochastic nonlinear diffusion
equations: an entropy formulation\thanks{This work is supported
by the National Science and Technology Major Project (2022ZD0116401).
The first author is also supported by the National Natural Science Foundation of China (No.~12222103),
and by Shanghai Institute for Mathematics and Interdisciplinary Sciences (Grand No.~SIMIS-ID-2024-WE),
and by LMNS at Fudan University.}}
\author{Kai Du\thanks{Shanghai Center for Mathematical Sciences,
Fudan University, Shanghai, China (Email: {\tt kdu@fudan.edu.cn}).
The author is a member of LMNS, Fudan University.}
\and 
Ruoyang Liu\thanks{Corresponding Author. Department of Mathematics, Shanghai Normal University, Shanghai, China (Email: {\tt ryliu@shnu.edu.cn}).}
}
\date{}
\maketitle

\vspace{-1ex}
\begin{abstract}
In this paper, we establish the existence, uniqueness and stability results for the obstacle problem
associated with a degenerate nonlinear diffusion equation perturbed by conservative
gradient noise. 
Our approach revolves round introducing a new entropy formulation for stochastic variational inequalities.
As a consequence, we obtain a novel well-posedness result for 
the obstacle problem of deterministic porous medium equations with nonlinear reaction terms.
\medskip 

\noindent\textbf{Keywords}: nonlinear diffusion equations, stochastic porous medium equations,
entropy solutions, obstacle problem, conservative noise
\medskip 

\noindent\textbf{MSC2020}: {60H15, 35K86, 35K65, 47J20}
\end{abstract}

\section{Introduction}

The aim of this paper is to establish the well-posedness of the obstacle
problem for a \emph{degenerate} nonlinear diffusion equation perturbed
by nonlinear conservative multiplicative noise. 
Specifically, we prove existence
and uniqueness results for a random field $\{u(t,x):t\ge0,\,x\in\mathbb{T}^{d}\}$
which lies above a given obstacle~$\psi$ and obeys the stochastic
partial differential equation (SPDE)
\begin{equation}
\partial_{t}u=\Delta\Phi(u)+f(t,x,u)+\nabla\cdot\sigma^{k}(x,u)\circ\dot{W}_{t}^{k}\label{eq:main}
\end{equation}
on the region $\{(t,x):u(t,x)>\psi(t,x)\}$ in a suitable sense. Here,
$\mathbb{T}^{d}$ denotes the $d$-dimensional torus, $\Phi:\mathbb{R}\to\mathbb{R}$
is an increasing function with $\Phi'(0)=0$, $\{W^{k}:k=1,2,\dots\}$ are independent
Wiener processes, and the Einstein summation convention is used over  $k=1,2,\dots$ in~\eqref{eq:main}.
The stochastic integral is interpreted in the Stratonovich sense.
Stratonovich noise arises naturally in modeling large-scale turbulence by using the Wong--Zakai principle (e.g. \cite{fedrizzi2017regularity, munteanu2018total, flandoli20212d, du2024entropy}). For further applications of gradient noise in SPDEs, see \cite{fehrman2019well, fehrman2024well, sauerbrey2024martingale}, etc.

There is a vast literature concerning the obstacle problems for deterministic
PDEs, usually under the name of variational inequalities, see e.g.
\cite{lions1967variational,brezis1972problemes,bensoussan1982applications,alt1983quasilinear,caffarelli1998obstacle}
among the pioneers, and \cite{bogelein2011degenerate,bogelein2015obstacle,scheven2015existence,caffarelli2017obstacle,athanasopoulos2019parabolic}
as more recent references. The obstacle problem for SPDE was first
studied by Haussmann and Pardoux~\cite{haussmann1989stochastic}, using a framework of Skorohod problem from
the study of reflected diffusion (see e.g. \cite{skorokhod1961stochastic,tanaka1979stochastic,lions1984stochastic}).
The key point of this framework is
to impose a ``minimal forcing'' on the equation to keep the solution
in the constraint. In the context of (\ref{eq:main}), this means
that, along with the random field $u$, one also needs to seek a random Radon measure $\nu$ satisfying the \emph{Skorohod condition}:
\begin{equation}\label{eq:skorohod}
    \langle u-\psi,\nu\rangle=0\quad\text{(a.s.)}
\end{equation}
such that the equation
\begin{equation}
\partial_{t}u=\Delta\Phi(u)+f(t,x,u)+\nabla\cdot\sigma^{k}(x,u)\circ\dot{W}_{t}^{k}+\nu\label{eq:SVI}
\end{equation}
holds everywhere (usually in the sense of distributions). 
The random measure $\nu$ plays a role of the ``minimal forcing'' that enforces the solution to stay above the obstacle.

In the framework of Skorohod, the obstacle problem for SPDE has been extensively investigated
under various scenarios, see e.g. \cite{nualart1992white,donati1993white,xu2009white,denis2014obstacle,brzezniak2023reflection}
for well-posedness, and \cite{dalang2006hitting,zambotti2017random,matoussi2021large}
for solution properties. 
We remark that all those works primarily address
\emph{non-degenerate} leading operators. 
However, the literature on degenerate equations is relatively sparse.
Yang and Zhang \cite{yang2019obstacle} discussed a degenerate equation with Lipschitz nonlinearity. 
Regarding stochastic porous medium equations, R\"ockner, Wang, and Zhang \cite{rockner2013stochastic} considered a constant obstacle $\psi\equiv 0$ and proved the existence of solutions in the space $H^{-1}$.
Liu and Tang \cite{liu2024obstacle} obtained the well-posedness, but imposing a special assumption on the barrier $\psi$. 
Remarkably, even for deterministic degenerate diffusion equations like the porous medium equation,  the complete well-posedness of the corresponding obstacle problem remains elusive (cf.~\cite{levi2007obstacle,levi2008mathematical,bogelein2015obstacle}).

The lack of regularity products the main difficulty in our problem.
Specifically, the compensation term $\nu$ in (\ref{eq:SVI}) is only a Radon measure on $Q_{T}:=[0,T)\times\mathbb{T}^{d}$
from a priori estimates (see Proposition \ref{prop:estimate for u_n,epsilon}),
and as a result, the continuity in space and time of the solution
$u$ is necessary to ensure the Skorohod condition \eqref{eq:skorohod} well-defined. To our knowledge,
such a continuity property seems not to be always available of the 
solutions to degenerate diffusion equations \cite{gess2020optimal,bruno2022optimal}.
Therefore, it is imperative to construct a novel framework
to comprehend the obstacle problem of SPDE.

In this paper, we propose an entropy formulation for the obstacle problem for SPDE \eqref{eq:main},
based on the theory of entropy solutions of stochastic nonlinear diffusion
equations recently developed by \cite{dareiotis2019entropy,dareiotis2020nonlinear,du2024entropy}.
Roughly speaking, the random field $u$ along with a random Radon measure $\nu$ is an \emph{entropy solution} to the equation (\ref{eq:main}) subject to the constraint $u\geq\psi$, if for each convex $\eta\in C^{2}(\mathbb{R})$
with compact $\mathrm{supp}\,(\eta^{\prime\prime})$, the inequality
\begin{equation}
\partial_{t}\eta(u)\le\eta^{\prime}(u)\big[\Delta\Phi(u)+f(t,x,u)+\nabla\cdot\sigma^{k}(x,u)\circ\dot{W}_{t}^{k}\big]+\eta^{\prime}(\psi)\nu\label{eq:entropy-solution}
\end{equation}
holds in the sense of distribution (see Definition \ref{def:entropy solution with ob}
for the precise formulation). 
Remarkably, we do not impose the Skorohod condition explicitly into this definition. 

At first glance, it may seem more intuitive to replace the term $\eta^{\prime}(\psi)\nu$ in \eqref{eq:entropy-solution} with $\eta^{\prime}(u)\nu$,
because the essence of entropy solution is to test the inequality satisfied by $\eta(u)$, 
and the chain rule obviously suggests $\eta^{\prime}(u)\nu$ rather than $\eta^{\prime}(\psi)\nu$.
However, retaining this term is justifiable due to two observations. Firstly,
the ``right'' term $\eta^{\prime}(u)\nu$ may not be well-defined
due to the unmatched regularity of $u$ and $\nu$, while $\eta^{\prime}(\psi)\nu$
is well-defined as long as the obstacle $\psi$ is properly regular.
Secondly, we observe (formally) that $\langle\eta^{\prime}(u)-\eta^{\prime}(\psi),\nu\rangle=0$
from the Skorohod condition, and $\eta^{\prime}(u)\geq\eta^{\prime}(\psi)$
as $\eta^{\prime}$ is increasing, so we actually have $\eta^{\prime}(u)\nu=\eta^{\prime}(\psi)\nu$.
In a nutshell, by leveraging the characteristics of entropy solutions,
we embed the Skorohod condition naturally into the entropy inequality
(\ref{eq:entropy-solution}) that can be well-defined. This replacement of $\eta^{\prime}(u)\nu$ with $\eta^{\prime}(\psi)\nu$ is a key novelty of our formulation, providing a new perspective on handling regularity issues in entropy-based frameworks.

The main results of this paper, Theorems \ref{thm:existence_main thm} and \ref{thm:unque_main thm}, establish the existence, uniqueness, and stability
of entropy solutions to the obstacle problem for SPDE \eqref{eq:main}, 
subject to mild assumptions on the obstacles. 
Analogous to the theory of entropy solutions for PDEs \cite{chen2003well,chen2004quasilinear},
proper regularity is indispensable for effectively defining entropy solutions in our problem.
Specifically, the solution must possess the regularity $\sqrt{\Phi^\prime(u)}\nabla {u}\in L_2(Q_T)$.
Fortunately, this requisite regularity property is automatically satisfied by the solution constructed through the method of vanishing viscosity in our proof of the existence. 

The outcomes of this paper yield new results for deterministic nonlinear diffusion equations.
As an example, we will show how our results apply to porous medium equations with reaction terms (see Section~\ref{sec:Deterministic}).
The obstacle problem for porous medium equations finds applications in modeling the flow of substance moving in subsoil with saturation \cite{gagneux1995analyse,levi2005singular,levi2007obstacle,levi2008mathematical,amorim2017obstacle}, with its well-posedness explored in previous works such as  \cite{levi2007obstacle,levi2008mathematical,bogelein2015obstacle,avelin2017comparison,bogelein2018doubly}. 
L\'{e}vi and Vallet~\cite{levi2007obstacle,levi2008mathematical} 
obtained a well-posedness result for the case of constant barriers
in an entropy formulation that differs from ours.
In the framework of variational inequality, B\"ogelein, Lukkari, and Scheven~\cite{bogelein2015obstacle} investigated general barriers and demonstrated the existence of strong solutions, albeit leaving the uniqueness as a conjecture.
Utilizing the entropy formulation proposed here, one can ascertain not only the existence but also the uniqueness of the solution for a general form of equations and a wide range of obstacles.
%Apart from the various interpretations of the equation, the regularity $u^{(m+1)/2}\in L_2(0,T;H^1(\mathbb{T}^d))$ required in our notion of solutions contrasts with the requirement $u^m\in L_2(0,T;H^1(\mathbb{T}^d))$ for strong solutions as introduced in \cite{bogelein2015obstacle}.
Evidently, the entropy solution has an intimate relation with the variational solutions.
We will demonstrate in Section~\ref{sec:Deterministic} that
the entropy solution actually satisfies a variational inequality in the weak sense, and conversely, a strong variational solution is also an entropy solution if the obstacle $\psi$ is greater than a positive number.

Our strategy of proving the existence is to construct an entropy solution by a penalization method. Specifically, we consider the penalized equation
\[
\partial_{t}u_{\epsilon}=\Delta\Phi(u_{\epsilon})+f(t,x,u_{\epsilon})+\nabla\cdot\sigma^{k}(x,u_{\epsilon})\circ\dot{W}_{t}^{k}+\epsilon^{-1}(u_{\epsilon}-\psi)^{-},\quad\epsilon>0,
\]
whose solvability is provided by \cite{dareiotis2019entropy,du2024entropy}.
We prove the monotonicity of $u_{\epsilon}$ and a uniform energy
estimate for $u_{\epsilon}$ by further mollifying the equations (see
Sections \ref{sec:Approximation}--\ref{sec:Comparison-principle}),
and by these we obtain a limit $u$ and verify the convergence of
each terms in the definition of entropy solution, except the term
involving the measure $\nu$. 
For the latter, we derive a uniform estimate

\[
\sup_{\epsilon>0}\,\mathbb{E}\bigg(\iint\epsilon^{-1}(u_{\epsilon}-\psi)^{-}\mathrm{d}x\mathrm{d}t\bigg)^{(m+1)/2}<\infty.
\]
This along with a compactness argument yields the existence of $\nu$. We also shows that $\nu$ is almost surely a finite Radon measure
on $Q_{T}$. {For existence, we require the obstacle to be H\"older continuous.}

The uniqueness of the entropy solution follows from an $L_1$-stability estimate.
To prove the latter, we apply Kruzhkov's doubling-variables methods to 
estimate the difference between an entropy solution and another one that satisfies an extra condition called the $(\star)$-property (see Definition \ref{def:star property} and Proposition \ref{prop:uniqueentropy}). 
On the other hand, we verify that such a property is automatically satisfied by the solutions of approximating equations in our proof of the existence.
This along with the $L_1$-stability yields the uniqueness. 
Technically, the low regularity of $\nu$ brings new difficulties in proving the $L_1$-estimates in contrast to the previous work \cite{liu2024obstacle} where the measure $\nu$ lies in the space $L_2(Q_T)$ due to a specific structure of the obstacles they considered. 
To overcome the challenge, we carefully analyze the term involving the compensation measures (see \eqref{eq:FGdiffL1-1-1-1}):
with the help of the Taylor expansion of the barrier function $\psi$, we utilize the symmetry of the specific test function to cancel out lower-order terms and strategically take limits to eliminate the higher-order terms. 
For this reason, we require the obstacle $\psi\in C^{2}_{x}(\bar{Q}_T)$
in our uniqueness result.

This paper is organized as follows. In Section \ref{sec:entropy formula}, we state the main results
after introducing notations and assumptions. 
Section \ref{sec:-stability} is devoted to deriving an $L_1$-stabilty estimate and the uniqueness of solutions under
an extra condition called the $(\star)$-property. 
In Section \ref{sec:Approximation}, we introduce the approximating penalized equations, derive crucial estimates for the solutions $u_{n,\epsilon}$ 
and the penalty terms $\epsilon^{-1}(u_{n,\epsilon}-\psi)^{-}$,
and validate the $(\star)$-property of $u_{n,\epsilon}$.
In Section \ref{sec:Comparison-principle}, we prove a comparison principle, which indicates that $u_{n,\epsilon}$ is increasing as $\epsilon\downarrow0$. 
In Section \ref{sec:Existence}, we finalize the proof of the existence of
entropy solutions to the original problem by passing to the limit of $u_{n,\epsilon}$ 
as $n\rightarrow\infty$ and $\epsilon\downarrow 0$. 
In Section \ref{sec:Deterministic}, we apply our result to the obstacle problem for porous medium equations, and discuss the relation between entropy solutions and variational solutions in this case. 
Two auxiliary lemmas are proved in the final section.

\section{Entropy formulation and main results\label{sec:entropy formula}}

Let $(\Omega,\mathcal{F},\{\mathcal{F}_{t}\},\mathbb{P})$ be a complete
filtered probability space and $\{W^{k}\}$ be a sequence of independent
$\mathcal{F}_{t}$-adapted Wiener processes. 
Denote $x=(x_{1},x_{2},\ldots,x_{d})$ for $x\in\mathbb{R}^{d}$.
For a fixed $T>0$, define
$Q_{T}:=[0,T)\times\mathbb{T}^{d}$ and $\Omega_{T}:=\Omega\times[0,T]$.
Let $L_{p}$ and $H_{p}^{s}$ be the usual Lebesgue and Sobolev spaces
with $p\geq2$ and $s\in\mathbb{R}$; we simplify $H_{2}^{s}$ as
$H^{s}$ (cf. \cite{evans1998partial}).
Moreover, we denote by $C^\kappa_x(\bar{Q}_T)$ the space of continuous functions $f$ on $\bar{Q}_T$
such that $f(t,\cdot)\in C^\kappa(\mathbb{T}^d)$ and its $C^\kappa$-norm is bounded uniformly in $t\in[0,T]$.

For simplicity, we denote by $\Pi_{\psi}(\Phi,f,\xi)$ the obstacle
problem for (\ref{eq:main}) with the (lower) obstacle $\psi$ and with
the initial data $u(0,x)=\xi(x)$, and by $\Pi(\Phi,f,\xi)$ the Cauchy
problem for (\ref{eq:main}) without obstacles. 
To formulate the entropy solution explicitly, we rewrite (\ref{eq:main})
into It\^o's form: 
\begin{align}
\mathrm{d}u & =\big[\Delta\Phi(u)+\partial_{x_{i}}\big(a^{ij}(x,u)\partial_{x_{j}}u+b^{i}(x,u)\big)+f(t,x,u)\big]\mathrm{d}t\label{eq:ito}\\
&\quad+\nabla\cdot\sigma^{k}(x,u)\mathrm{d}W_{t}^{k},\nonumber
\end{align}
where
\begin{align*}
a^{ij}(x,r) & :=\frac{1}{2}\sigma_{r}^{ik}(x,r)\sigma_{r}^{jk}(x,r),\quad b^{i}(x,r):=\frac{1}{2}\sigma_{r}^{ik}(x,r)\sigma_{x_{j}}^{jk}(x,r).
\end{align*}
Hereafter, we use the Einstein summation convention over $k=1, 2, \dots$ and $i,j=1,\dots,d$.
Moreover, we define the set
\[
\mathcal{E}:=\{\eta\in C^{2}(\mathbb{R}):\eta\ \mathrm{is\ convex\ with}\ \eta^{\prime\prime}\ \mathrm{compactly\ supported}\};
\]
and for a function $g:\mathbb{T}^{d}\times\mathbb{R}\rightarrow\mathbb{R}$,
we write
\[
\llbracket g\rrbracket(x,r):=\int_{0}^{r}g(x,s)\mathrm{d}s,\quad r\in\mathbb{R}.
\]
In some circumstances, we write $g_{x_{i}}(x,r)=\partial_{x_{i}}g(x,r)$
and $g_{r}(x,r)=\partial_{r}g(x,r)$. Fix $m>1$.
\begin{defn}
\label{def:entropy solution with ob}An entropy solution to the obstacle
problem $\Pi_{\psi}(\Phi,f,\xi)$ is a pair $(u,\nu)$ such that
\begin{itemize}
\item[(i)] $u$ is a predictable random field on $\bar{Q}_{T}$ such that $u\ge\psi$
for almost all $(\omega,t,x)\in\Omega\times Q_{T}$ and
\[
u\in L_{m+1}(\Omega_{T};L_{m+1}(\mathbb{T}^{d})),\quad\llbracket\sqrt{\Phi^{\prime}}\rrbracket(u)\in L_{2}(\Omega_{T};H^{1}(\mathbb{T}^{d}));
\]
\item[(ii)] $\nu$ is a predictable Radon measure on $Q_{T}$ and $\mathbb{E}[\nu(Q_T)]<\infty$;
\item[(iii)] for all $(\eta,\varphi,\varrho)\in\mathcal{E}\times C_{c}^{\infty}([0,T))\times C^{\infty}(\mathbb{T}^{d})$
and $\phi:=\varphi\varrho\geq0$, it holds almost surely that
\begin{equation}
\begin{aligned} & -\int_{Q_{T}}\eta(u)\partial_{t}\phi\mathrm{d}x\mathrm{d}t-\int_{Q_{T}}\eta^{\prime}(\psi)\phi\nu(\mathrm{d}x\mathrm{d}t)\\
 & \leq\int_{\mathbb{T}^{d}}\eta(\xi)\phi(0)\mathrm{d}x+\int_{Q_{T}}\llbracket\Phi^{\prime}\eta^{\prime}\rrbracket(u)\Delta\phi\mathrm{d}x\mathrm{d}t+\int_{Q_{T}}\llbracket a^{ij}\eta^{\prime}\rrbracket(u)\phi_{x_{i}x_{j}}\mathrm{d}x\mathrm{d}t\\
 & \quad+\int_{Q_{T}}\Big(\llbracket a_{x_{j}}^{ij}\eta^{\prime}+b_{r}^{i}\eta^{\prime}\rrbracket(x,u)-2\eta^{\prime}(u)b^{i}(x,u)\Big)\phi_{x_{i}}\mathrm{d}x\mathrm{d}t\\
 & \quad+\int_{Q_{T}}\Big(-\eta^{\prime}(u)b_{x_{i}}^{i}(x,u)+\llbracket b_{rx_{i}}^{i}\eta^{\prime}\rrbracket(x,u)+\eta^{\prime}(u)f(t,x,u)\Big)\phi\mathrm{d}x\mathrm{d}t\\
 & \quad+\int_{Q_{T}}\Big(\frac{1}{2}\eta^{\prime\prime}(u)\sum_{k=1}^{\infty}|\sigma_{x_{i}}^{ik}(x,u)|^{2}-\eta^{\prime\prime}(u)|\nabla\llbracket\sqrt{\Phi^{\prime}}\rrbracket(u)|^{2}\Big)\phi\mathrm{d}x\mathrm{d}t\\
 & \quad+\int_{Q_{T}}\Big(\eta^{\prime}(u)\phi\sigma_{x_{i}}^{ik}(x,u)-\llbracket\sigma_{rx_{i}}^{ik}\eta^{\prime}\rrbracket(x,u)\phi-\llbracket\sigma_{r}^{ik}\eta^{\prime}\rrbracket(x,u)\phi_{x_{i}}\Big)\mathrm{d}x\mathrm{d}W_{t}^{k}.
\end{aligned}
\label{eq:entropy formula-1}
\end{equation}
\end{itemize}
\end{defn}

%\begin{rem}
%As in the literature we require that $\nu$ is a random measure in
%the above definition. Actually, relying on the regularity of the barrier
%$\psi$, this requirement can be weakened as long as the second term
%on the left-hand side of (\ref{eq:entropy formula-1}) is well-defined.
%\end{rem}

To establish the well-posedness of the obstacle problem,
we need some structural conditions of the equations.
The following assumptions are used in \cite{dareiotis2020nonlinear,du2024entropy}.
Fix constants $K\geq1$ and $\kappa\in(0,1]$.
\begin{assumption}
\label{assu:assumption for phi} $\Phi\in C^{2}(\mathbb{R})$ is strictly
increasing and odd, $\sqrt{\Phi^{\prime}}$ is away from the origin,
and
\[
\begin{gathered}|\sqrt{\Phi^{\prime}}(0)|\leq K,\quad|(\sqrt{\Phi^{\prime}})^{\prime}(r)|\leq K|r|^{\frac{m-3}{2}},\quad\sqrt{\Phi^{\prime}}(r)\geq K^{-1}\mathbf{1}_{\{|r|\geq1\}},\\
mK|\llbracket\sqrt{\Phi^{\prime}}\rrbracket(r)-\llbracket\sqrt{\Phi^{\prime}}\rrbracket(s)|\geq\begin{cases}
|r-s|, & \textrm{if}\ |r|\lor|s|\geq1;\\
|r-s|^{\frac{m+1}{2}}, & \textrm{if}\ |r|\lor|s|<1.
\end{cases}
\end{gathered}
\]
\end{assumption}

A typical choice of $\Phi$ is $\Phi(r)=|r|^{m-1}r$, associated with
porous medium equations.

\begin{assumption}
\label{assu:assumption for sigma} $\sigma^{k}\in C^{3}(\bar{Q}_{T};\mathbb{R}^{d})$
for each $k=1,2,\dots$, and
\[
\sum_{k=1}^{\infty}\Vert\sigma^{k}\Vert_{C^{3}(\bar{Q}_{T};\mathbb{R}^{d})}^{2}\leq K;
\]
$f\in C(\bar{Q}_{T}\times\mathbb{R})$ and satisfies
\begin{align}\label{eq:assumption for f}
 & \sup_{t,r}\,\Vert f(t,\cdot,r)\Vert_{C^{\kappa}(\mathbb{T}^d)}+\Vert f_{r}\Vert_{L_{\infty}(Q_{T}\times\mathbb{R})}\leq K.
\end{align}
\end{assumption}

Our main results are the following two theorems.

\begin{thm}[existence]
\label{thm:existence_main thm}Let Assumptions \ref{assu:assumption for phi}
and \ref{assu:assumption for sigma} be satisfied
and $\psi\in C^{\kappa}_x(\bar{Q}_{T})$ with $\kappa\in (0,1]$. 
Then for any  $\xi\in L_{m+1}(\Omega,\mathcal{F}_{0};L_{m+1}(\mathbb{T}^{d}))$
satisfying 
\begin{equation}
\xi(x)\geq\psi(0,x),\quad\forall(\omega,x)\in\Omega\times\mathbb{T}^{d},\label{eq:assumption on xi}
\end{equation}
there exists an entropy solution $(u,\nu)$ to $\Pi_{\psi}(\Phi,f,\xi)$.
\end{thm}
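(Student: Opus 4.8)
The plan is to construct the pair $(u,\nu)$ by a double approximation — a vanishing-viscosity/mollification parameter $n$ together with a penalization parameter $\epsilon$ — and then pass to the limit. I would fix smooth approximations $\psi_n,f_n,\sigma_n^k,\xi_n$ of the data and coefficients (the mollification of $\psi$ being what lets us work with a merely $C^\kappa_x$ obstacle) and add a viscosity term $n^{-1}\Delta$, and consider the penalized Cauchy problem
\[
\partial_t u_{n,\epsilon}=\Delta\Phi(u_{n,\epsilon})+n^{-1}\Delta u_{n,\epsilon}+f_n(t,x,u_{n,\epsilon})+\nabla\cdot\sigma_n^k(x,u_{n,\epsilon})\circ\dot W_t^k+\epsilon^{-1}(u_{n,\epsilon}-\psi_n)^-.
\]
For fixed $n,\epsilon$ the extra reaction term $r\mapsto\epsilon^{-1}(r-\psi_n(t,x))^-$ is Lipschitz in $r$ and $C^\kappa$ in $x$, so this is a Cauchy problem $\Pi(\Phi,\widetilde f,\xi_n)$ of the type solved in \cite{dareiotis2019entropy,du2024entropy,dareiotis2020nonlinear}, which yields a unique (and, thanks to the added viscosity, sufficiently regular) entropy solution $u_{n,\epsilon}$; the compatibility condition \eqref{eq:assumption on xi} ensures the approximate initial data still lies above the approximate obstacle, so the penalization is consistent.

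Next I would establish three families of uniform bounds. First, the comparison principle of Section~\ref{sec:Comparison-principle} shows $u_{n,\epsilon}$ is nonincreasing in $\epsilon$, so $u_{n,\epsilon}\uparrow u_n$ as $\epsilon\downarrow0$ and, along a subsequence, $u_n\to u$. Second, the a priori energy estimates of Section~\ref{sec:Approximation} (cf. Proposition~\ref{prop:estimate for u_n,epsilon}) bound $u_{n,\epsilon}$ uniformly in $L_{m+1}(\Omega_T;L_{m+1})$ and $\llbracket\sqrt{\Phi'}\rrbracket(u_{n,\epsilon})$ uniformly in $L_2(\Omega_T;H^1)$, which upon passing to the limit delivers precisely the regularity demanded in Definition~\ref{def:entropy solution with ob}(i) and, via weak/strong compactness of $\llbracket\,\cdot\,\rrbracket(u_{n,\epsilon})$, the convergences needed later. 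Third, the penalty estimate
\[
\sup_{n,\epsilon}\,\mathbb{E}\Big(\iint_{Q_T}\epsilon^{-1}(u_{n,\epsilon}-\psi_n)^-\,\mathrm{d}x\mathrm{d}t\Big)^{(m+1)/2}<\infty,
\]
obtained by testing the equation against a suitable function of $(u_{n,\epsilon}-\psi_n)^-$ and invoking Assumption~\ref{assu:assumption for phi}, shows the measures $\nu_{n,\epsilon}:=\epsilon^{-1}(u_{n,\epsilon}-\psi_n)^-\,\mathrm{d}x\mathrm{d}t$ have uniformly integrable total mass; a tightness argument on predictable finite Radon measures on $Q_T$ then produces a limit $\nu$ with $\mathbb{E}[\nu(Q_T)]<\infty$, giving (ii), and the same $L_1$-bound forces $(u_{n,\epsilon}-\psi_n)^-\to0$, hence $u\ge\psi$ a.e., completing (i).

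Finally I would pass to the limit in the entropy inequality. Each $u_{n,\epsilon}$ satisfies the entropy inequality of the obstacle-free problem with the extra term $\eta'(u_{n,\epsilon})\nu_{n,\epsilon}$ on the right-hand side; the deterministic and stochastic integrals all converge by the weak-$H^1$ and strong-$L^p$ convergence of the bracketed nonlinearities together with continuity of the coefficients, exactly as in the obstacle-free existence theory. The one genuinely new point is the penalty term: write $\eta'(u_{n,\epsilon})\nu_{n,\epsilon}=\eta'(\psi_n)\nu_{n,\epsilon}+(\eta'(u_{n,\epsilon})-\eta'(\psi_n))\nu_{n,\epsilon}$; since $\nu_{n,\epsilon}$ is supported on $\{u_{n,\epsilon}<\psi_n\}$, Lipschitz continuity of $\eta'$ bounds the second piece by $C\,\epsilon^{-1}\iint((u_{n,\epsilon}-\psi_n)^-)^2$, which vanishes by the penalty estimate, while $\eta'(\psi_n)\nu_{n,\epsilon}\to\eta'(\psi)\nu$. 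This is exactly the mechanism by which the Skorohod condition gets absorbed into the entropy formulation. I expect the \emph{penalty moment estimate} — and the attendant identification of $\nu$ as a bona fide predictable finite Radon measure under only Hölder regularity of $\psi$ — to be the crux; the energy estimates and the term-by-term passage to the limit are adaptations of the obstacle-free theory, and the monotonicity is handled separately via the comparison principle of Section~\ref{sec:Comparison-principle}.
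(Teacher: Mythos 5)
Your broad strategy (penalize, get uniform estimates, extract a limit measure, pass to the limit in the entropy inequality) matches the paper's, but both the details and one key step diverge, and the latter contains a genuine gap.

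On the approximation scheme: the paper does not mollify $\psi$, $f$, $\sigma^k$, nor add a viscosity term $n^{-1}\Delta$. Instead it replaces $\Phi$ by the smooth non-degenerate $\Phi_n$ of Lemma~\ref{lem:Assumptio for coef} (which already provides uniform parabolicity, $\sqrt{\Phi_n'}\ge 2/n$), and truncates the initial data, $\xi_n:=(-n)\vee(\xi\wedge n)$. Keeping the raw degenerate $\Phi$ and adding $n^{-1}\Delta$ as you propose would leave you to handle the non-smoothness of $\Phi$ at the origin when applying It\^o's formula in the Galerkin scheme, which is exactly what $\Phi_n$ is designed to avoid. You also claim the mollification of $\psi$ is what lets one work with a $C^\kappa_x$ obstacle; in fact the paper never mollifies $\psi$ --- the $C^\kappa_x$ regularity enters directly through the inhomogeneity $r\mapsto f+\epsilon^{-1}(r-\psi)^-$ being H\"older in $x$ and Lipschitz in $r$, which is all the penalized Cauchy problem needs. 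Finally, the order of limits is reversed: the paper sends $n\to\infty$ first (Lemma~\ref{lem:exist for u_=00005Cepsilon}) to obtain $u_\epsilon$, then uses monotonicity (Proposition~\ref{prop:comparison}) as $\epsilon\downarrow 0$. Your order ($\epsilon$ then $n$) is not obviously wrong but would require a separate stability argument for the obstacle problems with $\Phi_n$ in place of $\Phi$.

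The genuine gap is in your treatment of the penalty term. You write $\eta'(u_{n,\epsilon})\nu_{n,\epsilon}=\eta'(\psi_n)\nu_{n,\epsilon}+(\eta'(u_{n,\epsilon})-\eta'(\psi_n))\nu_{n,\epsilon}$ and claim the second piece vanishes because it is bounded by $C\epsilon^{-1}\iint ((u_{n,\epsilon}-\psi_n)^-)^2$, citing the penalty estimate. That does not follow: Proposition~\ref{prop:estimate for u_n,epsilon} (with $p=2$) gives exactly $\epsilon^{-1}\mathbb{E}\|(u_{n,\epsilon}-\psi)^-\|_{L_2(Q_T)}^2\le C$, i.e.\ this quantity is uniformly \emph{bounded}, not small. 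So the decomposition as written does not close. The mechanism the paper uses (and that your own decomposition \emph{almost} exposes) is not a vanishing-difference argument but a sign argument: $\nu_{n,\epsilon}\ge 0$, $\phi\ge 0$, and $u_{n,\epsilon}<\psi$ on the support of $\nu_{n,\epsilon}$, so the monotonicity of $\eta'$ gives the \emph{pointwise inequality} $\eta'(u_{n,\epsilon})\nu_{n,\epsilon}\phi\le\eta'(\psi)\nu_{n,\epsilon}\phi$. This can be plugged directly into the entropy inequality for the unconstrained penalized problem (moving the penalty to the left-hand side), which is already the form required by Definition~\ref{def:entropy solution with ob} at the $\epsilon>0$ level; afterwards only the trivial weak convergence $\int\eta'(\psi)\nu_{n,\epsilon}\phi\to\int\eta'(\psi)\phi\,\nu(\mathrm{d}x\,\mathrm{d}t)$ against a fixed continuous integrand is needed. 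In other words, your ``error'' term has the good sign and can be dropped outright; the claim that it tends to zero is both false and unnecessary.
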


\begin{thm}[$L_1$-stability and uniqueness]
\label{thm:unque_main thm}Let Assumptions \ref{assu:assumption for phi}
and \ref{assu:assumption for sigma} be satisfied and $\psi\in C^{2}_x(\bar{Q}_{T})$. 
Let $(u,\nu)$ and $(\tilde{u},\tilde{\nu})$ be the entropy solutions to $\Pi_{\psi}(\Phi,f,{\xi})$ and $\Pi_{\psi}(\Phi,f,\tilde{\xi})$, respectively,  with $\xi,\tilde{\xi}\in L_{m+1}(\Omega,\mathcal{F}_{0};L_{m+1}(\mathbb{T}^{d}))$ satisfying (\ref{eq:assumption on xi}).
Then there is a constant $C$ depending only on $K$, $d$, and $T$
such that
\[
\underset{t\in[0,T]}{\mathrm{ess\ sup}}\,\mathbb{E}\Vert u(t,\cdot)-\tilde{u}(t,\cdot)\Vert_{L_{1}(\mathbb{T}^{d})}\leq C\, \mathbb{E}\Vert\xi-\tilde{\xi}\Vert_{L_{1}(\mathbb{T}^{d})}.
\]
Consequently, the obstacle problem $\Pi_{\psi}(\Phi,f,{\xi})$ admits at most one entropy solution.
\end{thm}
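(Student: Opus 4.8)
The plan is to derive Theorem~\ref{thm:unque_main thm} as a soft consequence of a sharper one-sided comparison statement together with the existence theorem, pushing all the analysis into the former. In Section~\ref{sec:-stability} I would first prove Proposition~\ref{prop:uniqueentropy}: if $(u,\nu)$ is an entropy solution of $\Pi_{\psi}(\Phi,f,\xi)$ and $(\tilde u,\tilde\nu)$ is an entropy solution of $\Pi_{\psi}(\Phi,f,\tilde\xi)$ satisfying the additional $(\star)$-property of Definition~\ref{def:star property}, then $\mathrm{ess\,sup}_{t\in[0,T]}\,\mathbb E\|u(t,\cdot)-\tilde u(t,\cdot)\|_{L_{1}(\mathbb T^{d})}\le C\,\mathbb E\|\xi-\tilde\xi\|_{L_{1}(\mathbb T^{d})}$. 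Granting this, and granting (as established in Section~\ref{sec:Approximation}) that the penalized approximants used to build a solution in Theorem~\ref{thm:existence_main thm} enjoy the $(\star)$-property, the theorem is immediate: given arbitrary data $\xi,\tilde\xi$, invoke Theorem~\ref{thm:existence_main thm} to obtain $(\star)$-solutions $(u^{\star},\nu^{\star})$ and $(\tilde u^{\star},\tilde\nu^{\star})$ with these data; comparing $(u,\nu)$ with $(u^{\star},\nu^{\star})$ (equal data) via the Proposition forces $u=u^{\star}$ a.e., and similarly $\tilde u=\tilde u^{\star}$ a.e.; comparing $(u,\nu)$ with $(\tilde u^{\star},\tilde\nu^{\star})$ and using $\tilde u^{\star}=\tilde u$ then yields the asserted estimate, and the choice $\tilde\xi=\xi$ specializes it to uniqueness.

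The content is therefore in Proposition~\ref{prop:uniqueentropy}, which I would prove by Kruzhkov's doubling of variables adapted to the stochastic entropy framework of \cite{dareiotis2019entropy,du2024entropy}. One writes the entropy inequality~\eqref{eq:entropy formula-1} for $(u,\nu)$ in variables $(t,x)$, tested against a smooth convex approximation $\eta_{\delta}$ of $r\mapsto|r-k|$, and the same inequality for $(\tilde u,\tilde\nu)$ in variables $(s,y)$; one then substitutes $k=\tilde u(s,y)$, resp.\ $k=u(t,x)$, integrates both against a test function localized near the diagonal by mollifiers $\rho_{\varepsilon_{0}}(t-s)\,\rho_{\varepsilon}(x-y)$ times a time cutoff, adds, and takes expectations. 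The martingale terms vanish under $\mathbb E$; the second-order terms coming from $\Delta\Phi$ and from $a^{ij}$, together with the noise cross-terms, are absorbed by the nonnegative dissipation $-\eta_{\delta}''(u)|\nabla\llbracket\sqrt{\Phi'}\rrbracket(u)|^{2}$ (and its counterpart for $\tilde u$), available precisely because of the regularity $\llbracket\sqrt{\Phi'}\rrbracket(u)\in L_{2}(\Omega_{T};H^{1})$ built into Definition~\ref{def:entropy solution with ob}(i); the reaction terms are controlled by the Lipschitz bound on $f_{r}$ from Assumption~\ref{assu:assumption for sigma}. Letting $\varepsilon,\varepsilon_{0}\to0$ and then $\delta\to0$ in the standard way produces $\mathbb E\|u(t,\cdot)-\tilde u(t,\cdot)\|_{L_{1}}$ on the left and $\mathbb E\|\xi-\tilde\xi\|_{L_{1}}$ plus a Gronwall-type remainder on the right.

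The genuinely new difficulty, and the step I expect to dominate the work, is the pair of compensation-measure terms. After summing, these contribute (schematically) $-\iint\eta_{\delta}'(\psi(t,x)-\tilde u(s,y))\,\phi\,\nu(\mathrm dt\,\mathrm dx)$ and its $\tilde\nu$-analogue; since $\nu,\tilde\nu$ are only finite Radon measures on $Q_{T}$ (not $L_{2}$-functions as in \cite{liu2024obstacle}), one cannot simply let $\varepsilon\to0$ and replace $\psi(s,y)$ by $\psi(t,x)$. Instead I would Taylor-expand $\psi(s,y)=\psi(t,x)+\nabla_{x}\psi(t,x)\cdot(y-x)+O(|x-y|^{2})$ in the spatial variable, use the evenness $\rho_{\varepsilon}(x-y)=\rho_{\varepsilon}(y-x)$ of the mollifier to annihilate the first-order term upon integration, and dominate the remaining $O(|x-y|^{2})$ contribution by $O(\varepsilon^{2})\,\mathbb E[\nu(Q_{T})+\tilde\nu(Q_{T})]\to0$; on the contact sets one has $u=\psi$ while $\tilde u\ge\psi$ (and symmetrically), so that the $\delta\to0$ limit of $\eta_{\delta}'$ makes these terms enter with a sign that is either favourable or vanishing. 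The $(\star)$-property is what makes the ``rough'' side of the doubling rigorous (it supplies the approximation of $\tilde u$ and $\tilde\nu$ needed to justify the substitution $k=\tilde u(s,y)$ against a mere measure), and the strengthened hypothesis $\psi\in C^{2}_{x}(\bar Q_{T})$ (versus $C^{\kappa}_{x}$ for existence) is exactly what the quadratic Taylor remainder requires. Coordinating the orders of the limits $\delta\to0$ and $\varepsilon\to0$ against the $C^{2}$-modulus of $\psi$ and the total masses of $\nu,\tilde\nu$ is where the real care is needed; everything else follows the established SPDE entropy-solution playbook.
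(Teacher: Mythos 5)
Your high-level strategy — reduce the theorem to Proposition~\ref{prop:uniqueentropy} plus the fact that the penalization scheme produces $(\star)$-solutions, and use those as comparison objects — is the right one, and your doubling-of-variables sketch of Proposition~\ref{prop:uniqueentropy} is in the spirit of the paper's proof (including the crucial idea of Taylor-expanding $\psi$ and exploiting the oddness of $\eta_\delta'$ to kill the first-order term in the compensation-measure contributions). But the reduction step, as you have written it, contains a genuine gap.

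You ``invoke Theorem~\ref{thm:existence_main thm} to obtain $(\star)$-solutions $(u^\star,\nu^\star)$ and $(\tilde u^\star,\tilde\nu^\star)$ with these data.'' Theorem~\ref{thm:existence_main thm} does not assert the $(\star)$-property, and in fact the solution it constructs for a general $\xi\in L_{m+1}(\Omega,\mathcal F_0;L_{m+1}(\mathbb T^d))$ need not have it. Tracking through Section~\ref{sec:Approximation}: Proposition~\ref{prop:star property} gives the $(\star)$-property of the approximants $u_{n,\epsilon}$ with constant \emph{uniform in $n,\epsilon$ only when $\xi\in L_4(\Omega;L_2(\mathbb T^d))$}, and Lemma~\ref{lem:appro for star} requires precisely this uniformity to pass the $(\star)$-property to the a.s.\ limit. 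Since $L_{m+1}(\Omega;L_{m+1}(\mathbb T^d))\not\subset L_4(\Omega;L_2(\mathbb T^d))$ when $1<m<3$, your $\tilde u^\star$ is not known to have the $(\star)$-property, so the comparison ``$(u,\nu)$ vs.\ $(\tilde u^\star,\tilde\nu^\star)$'' is not covered by Proposition~\ref{prop:uniqueentropy}. The fix is the one the paper uses: introduce a \emph{single} auxiliary sequence $\xi_n:=(-n)\lor(\xi\land n)$ (bounded, hence in $L_4(\Omega;L_2)$), so that the corresponding $(u_n,\nu_n)$ genuinely has the $(\star)$-property; apply Proposition~\ref{prop:uniqueentropy} twice, comparing $u_n$ with $u$ (data $\xi_n$ vs.\ $\xi$) and $u_n$ with $\tilde u$ (data $\xi_n$ vs.\ $\tilde\xi$); add by the triangle inequality and let $n\to\infty$ \emph{in the $L_1$-estimate}, rather than trying to pass the $(\star)$-property itself to the limit. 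This also short-circuits your detour through ``$u=u^\star$'' and ``$\tilde u=\tilde u^\star$'', which is not needed.

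Two smaller points on the Proposition~\ref{prop:uniqueentropy} sketch, since you flag them as the crux. First, after the Taylor expansion of $\psi$ the quadratic remainder gives a bound of order $\|\eta_\delta''\|_{L_\infty}\varsigma^2\,\mathbb E[\nu(Q_T)+\tilde\nu(Q_T)]=O(\delta^{-1}\varsigma^2)$, not $O(\varsigma^2)$; the $\delta^{-1}$ is what forces the coupling $\delta=\varsigma^{2\vartheta}$ with $\vartheta<1$ before letting $\varsigma\downarrow 0$. Second, it is not accurate that ``the martingale terms vanish under $\mathbb E$''; on the side where one substitutes the rough variable $z=u(t,x)$ into the $(\tilde u,\tilde\nu)$ entropy inequality, the resulting stochastic integral is non-adapted and does \emph{not} vanish in expectation — this is exactly what the $(\star)$-property controls, trading it for the bilinear form $\mathcal B$ plus an $O(\theta^{1-\mu})$ error.
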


\begin{rem}
The advantage of our definition is its ability to address cases where the Skorohod condition~\eqref{eq:skorohod} may not be well-defined due to the weak regularity of $u$. Importantly, our definition guarantees the validity of the Skorohod condition, provided it is well-posed (such as when $u$ is continuous or quasi-continuous, as in \cite{denis2014obstacle}). 
To see this, one can derive the entropy inequality~\eqref{eq:entropy formula-1} for $\eta(r)=r^2$ by approximation, and compare this with the direct application of It\^{o}'s formula to $u^2$. This comparison naturally establishes the validity of the Skorohod condition.
\end{rem}

Our proof of the above theorems relies on an $L_1$-stability estimate, Proposition~\ref{prop:uniqueentropy}.
Compared with Theorem~\ref{thm:unque_main thm}, 
Proposition~\ref{prop:uniqueentropy} imposes an extra condition called the ($\star$)-property (see Definition \ref{def:star property}) on one of the solutions that are compared.

The proof of Theorem \ref{thm:existence_main thm} is finalized in Section~\ref{sec:Existence}.
Here we conclude the proof of Theorem~\ref{thm:unque_main thm}.

\begin{proof}[Proof of Theorem~\ref{thm:unque_main thm}]
Define $\xi_{n}:=(-n)\lor(\xi\land n)$. 
It is shown in Section \ref{sec:Existence} that
the problem $\Pi_{\psi}(\Phi,f,\xi_{n})$ admits an entropy solution $(u_{n},\nu_{n})$ that has the $(\star)$-property.
Then by Proposition \ref{prop:uniqueentropy}, one has
\[
\begin{aligned}
& \underset{t\in[0,T]}{\mathrm{ess\ sup}}\,\mathbb{E}\Vert u(t,\cdot)-\tilde{u}(t,\cdot)\Vert_{L_{1}(\mathbb{T}^{d})} \\
&\le \underset{t\in[0,T]}{\mathrm{ess\ sup}}\,\mathbb{E}\Vert u_{n}(t,\cdot)-{u}(t,\cdot)\Vert_{L_{1}(\mathbb{T}^{d})} 
+ \underset{t\in[0,T]}{\mathrm{ess\ sup}}\,\mathbb{E}\Vert u_{n}(t,\cdot)-\tilde{u}(t,\cdot)\Vert_{L_{1}(\mathbb{T}^{d})}\\
& \leq C \big(\mathbb{E}\Vert\xi_{n}-{\xi}\Vert_{L_{1}(\mathbb{T}^{d})}
+ \mathbb{E}\Vert\xi_{n}-\tilde{\xi}\Vert_{L_{1}(\mathbb{T}^{d})}
\big).
\end{aligned}
\]
Passing to the limit yields the desired estimate.
The uniqueness of the measure $\nu$ is verified in Proposition~\ref{prop:Uniquenessfornu}.
The proof is complete.
\end{proof}

\begin{rem}
In view of the proof of Theorem~\ref{thm:existence_main thm}, one may prove that the entropy solution also has the regularity $\nabla\Phi(u)\in L_2(\Omega_T;H^1(\mathbb{T}^d))$.
In order to control the length of the article, we show this rigorously only for the deterministic porous medium equation in Lemma~\ref{lem:more-regularity} when we discuss the relation between the entropy solution and the variational solution.
\end{rem}

\begin{rem}
In this paper, we confine our discussion to the lower obstacle problem.
Nonetheless, it is worth noting that the methods employed and the results obtained are equally applicable to the investigation of upper obstacle problems, leading to analogous conclusions.
We also believe that our methods can address the bilateral obstacle problem, 
albeit with potentially increased technical complexity.
\end{rem}

\begin{rem}
Our results can be extended to a general equation 
\begin{align*}
\mathrm{d}u & =\big[\Delta\Phi(u)+\nabla\cdot G(x,u)+f(t,x,u)\big]\mathrm{d}t+\nabla\cdot\sigma^{k}(x,u)\circ\mathrm{d}W_{t}^{k},
\end{align*}
where $\sigma$ and $G$ satisfy \cite[Assumption 2.3]{dareiotis2020nonlinear}.
\end{rem}

\section{$L_{1}$-stability\label{sec:-stability}}

In this section, we derive an $L_{1}$-estimate (Proposition~\ref{prop:uniqueentropy})
for the difference between two entropy solutions to the obstacle problem,
which naturally leads to the uniqueness result. The strategy of the proof
is based on Kruzhkov's doubling-variables method, and is also used in
proving the comparison principle, Proposition~\ref{prop:comparison}. 

At this stage, we assume a priori assumption that there exists an
entropy solution possessing an additional property that is called
the strong entropy condition in \cite{feng2008stochastic} or the
$(\star)$-property in \cite{dareiotis2019entropy,dareiotis2020ergodicity,du2024entropy}.
Fortunately, this property is satisfied by the solution that is constructed
in the proof of the existence under a stronger integrability condition of $\xi$. Therefore, the $L_{1}$-estimate is valid
actually for any entropy solutions, as stated in Theorem \ref{thm:unque_main thm}.
For the detail, refer to the proof of Theorem \ref{thm:unque_main thm} in Section \ref{sec:entropy formula}.

Now we introduce the $(\star)$-property. For simplicity, we denote
\[
\int_{t}\cdot:=\int_{0}^{T}\cdot\,\mathrm{d}t\quad\text{and}\quad\int_{x}\cdot:=\int_{\mathbb{T}^{d}}\cdot\,\mathrm{d}x.
\]
Given functions $(u,\tilde{u},\phi,h)\in L_{1}(\Omega_{T}\times\mathbb{T}^{d})\times L_{1}(\Omega_{T}\times\mathbb{T}^{d})\times C^{\infty}(\bar{Q}_{T}\times\bar{Q}_{T})\times C^{\infty}(\mathbb{R})$
and real number $z$, we define
\begin{align*}
H(t,x,z|\tilde{u},\phi,h) & :=\int_{0}^{T}\int_{y}h(\tilde{u}-z)\sigma_{y_{i}}^{ik}(y,\tilde{u})\phi(t,x,s,y)\mathrm{d}W_{s}^{k}\\
 & \quad-\int_{0}^{T}\int_{y}\llbracket\sigma_{ry_{i}}^{ik}h(\cdot-z)\rrbracket(y,\tilde{u})\phi(t,x,s,y)\mathrm{d}W_{s}^{k}\\
 & \quad-\int_{0}^{T}\int_{y}\llbracket\sigma_{r}^{ik}h(\cdot-z)\rrbracket(y,\tilde{u})\partial_{y_{i}}\phi(t,x,s,y)\mathrm{d}W_{s}^{k},
\end{align*}
and 
\begin{align*}
\mathcal{B}(u,\tilde{u}|\phi,h) & :=-\mathbb{E}\int_{t,x,s,y}\partial_{y_{i}x_{j}}\phi(t,x,s,y)\int_{\tilde{u}}^{u}\int_{r}^{\tilde{u}}h^{\prime}(\tilde{r}-r)\sigma_{r}^{jk}(x,r)\sigma_{r}^{ik}(y,\tilde{r})\mathrm{d}\tilde{r}\mathrm{d}r\\
 & \quad-\mathbb{E}\int_{t,x,s,y}\partial_{y_{i}}\phi(t,x,s,y)\int_{\tilde{u}}^{u}\int_{r}^{\tilde{u}}h^{\prime}(\tilde{r}-r)\sigma_{rx_{j}}^{jk}(x,r)\sigma_{r}^{ik}(y,\tilde{r})\mathrm{d}\tilde{r}\mathrm{d}r\\
 & \quad+\mathbb{E}\int_{t,x,s,y}\partial_{y_{i}}\phi(t,x,s,y)\int_{u}^{\tilde{u}}h^{\prime}(\tilde{r}-u)\sigma_{x_{j}}^{jk}(x,u)\sigma_{r}^{ik}(y,\tilde{r})\mathrm{d}\tilde{r}\\
 & \quad-\mathbb{E}\int_{t,x,s,y}\partial_{x_{j}}\phi(t,x,s,y)\int_{\tilde{u}}^{u}\int_{r}^{\tilde{u}}h^{\prime}(\tilde{r}-r)\sigma_{r}^{jk}(x,r)\sigma_{ry_{i}}^{ik}(y,\tilde{r})\mathrm{d}\tilde{r}\mathrm{d}r\\
 & \quad-\mathbb{E}\int_{t,x,s,y}\phi(t,x,s,y)\int_{\tilde{u}}^{u}\int_{r}^{\tilde{u}}h^{\prime}(\tilde{r}-r)\sigma_{rx_{j}}^{jk}(x,r)\sigma_{ry_{i}}^{ik}(y,\tilde{r})\mathrm{d}\tilde{r}\mathrm{d}r\\
 & \quad+\mathbb{E}\int_{t,x,s,y}\phi(t,x,s,y)\int_{u}^{\tilde{u}}h^{\prime}(\tilde{r}-u)\sigma_{x_{j}}^{jk}(x,u)\sigma_{ry_{i}}^{ik}(y,\tilde{r})\mathrm{d}\tilde{r}\\
 & \quad+\mathbb{E}\int_{t,x,s,y}\partial_{x_{j}}\phi(t,x,s,y)\int_{\tilde{u}}^{u}h^{\prime}(\tilde{u}-r)\sigma_{r}^{jk}(x,r)\sigma_{y_{i}}^{ik}(y,\tilde{u})\mathrm{d}r\\
 & \quad+\mathbb{E}\int_{t,x,s,y}\phi(t,x,s,y)\int_{\tilde{u}}^{u}h^{\prime}(\tilde{u}-r)\sigma_{rx_{j}}^{jk}(x,r)\sigma_{y_{i}}^{ik}(y,\tilde{u})\mathrm{d}r\\
 & \quad-\mathbb{E}\int_{t,x,s,y}\phi(t,x,s,y)h^{\prime}(\tilde{u}-u)\sigma_{x_{j}}^{jk}(x,u)\sigma_{y_{i}}^{ik}(y,\tilde{u}),
\end{align*}
where on the right-hand side $u=u(t,x)$ and $\tilde{u}=\tilde{u}(s,y)$.
Moreover, given a $C^{\infty}$-function $\rho:\mathbb{R}\rightarrow[0,2]$
with the properties
\[
\mathrm{supp}\,\rho\subset(0,1),\quad\int_{\mathbb{R}}\rho(x)\mathrm{d}x=1.
\]
we set $\rho_{\theta}(r):=\theta^{-1}\rho(\theta^{-1}r)$ for $\theta>0$. 
\begin{defn}[Definition 3.6 in \cite{dareiotis2020nonlinear}]
\label{def:star property} We say that a function $u\in L_{m+1}(\Omega_{T}\times\mathbb{T}^{d})$
has the ($\star$)-property, if for any $(\tilde{u},g,\varphi,h)\in L_{m+1}(\Omega_{T}\times\mathbb{T}^{d})\times C^{\infty}(\mathbb{T}^{d}\times\mathbb{T}^{d})\times C_{c}^{\infty}((0,T))\times C^{\infty}(\mathbb{R})$
with $h^{\prime}\in C_{c}^{\infty}(\mathbb{R})$ and for sufficiently
small $\theta>0$, it holds that $H(\cdot,\cdot,u(\cdot,\cdot)|\tilde{u},\phi{}_{\theta},h)\in L_{1}(\Omega_{T}\times\mathbb{T}^{d})$
with 
\[
\phi{}_{\theta}(t,x,s,y):=g(x,y)\rho_{\theta}(t-s)\varphi(\frac{t+s}{2}),\quad(t,x,s,y)\in\bar{Q}_{T}\times\bar{Q}_{T},
\]
and that
\[
\mathbb{E}\int_{t,x}H(t,x,u(t,x)|\tilde{u},\phi{}_{\theta},h)\leq C\theta^{1-\mu}+\mathcal{B}(u,\tilde{u}|\phi{}_{\theta},h)
\]
for a constant $C$ independent of $\theta$, where $\mu:=(3m+5)/(4m+4)<1$.
\end{defn}

Define $\varrho_{\varsigma}:=\rho_{\varsigma}^{\otimes d}$ for all
$\varsigma>0$. 
\begin{prop}
\label{prop:uniqueentropy}Let Assumptions \ref{assu:assumption for phi}
and \ref{assu:assumption for sigma} be satisfied, and $\psi\in C^{2}_x(\bar{Q}_{T})$.
Given $\psi(0,\cdot)\le\xi,\tilde{\xi}\in L_{m+1}(\Omega,\mathcal{F}_{0};L_{m+1}(\mathbb{T}^{d}))$,
we suppose that $(u,\nu)$ and $(\tilde{u},\tilde{\nu})$ are entropy
solutions to $\Pi_{\psi}(\Phi,f,\xi)$ and $\Pi_{\psi}(\Phi,f,\tilde{\xi})$,
respectively. Assume that $u$ has the ($\star$)-property. Then,
\begin{equation}
\underset{t\in[0,T]}{\mathrm{ess\ sup}}\,\Vert u(t,\cdot)-\tilde{u}(t,\cdot)\Vert_{L_{1}(\Omega\times\mathbb{T}^{d})}\leq C\Vert\xi-\tilde{\xi}\Vert_{L_{1}(\Omega\times\mathbb{T}^{d})},\label{eq:L1_for diff solution u}
\end{equation}
where the constant $C$ depends only on $d$, $K$, and $T$.
\end{prop}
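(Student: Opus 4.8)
Here is how I would prove Proposition~\ref{prop:uniqueentropy}.

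\medskip

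The plan is to carry out Kruzhkov's doubling-of-variables method in the form developed for stochastic nonlinear diffusion equations in \cite{dareiotis2020nonlinear,du2024entropy}, the genuinely new ingredient being the treatment of the compensation measures $\nu$ and $\tilde\nu$. Fix a smooth, even, convex approximation $\eta_\delta\in\mathcal{E}$ of $r\mapsto|r|$ with $\eta_\delta'(0)=0$, $\eta_\delta'\uparrow\operatorname{sgn}$ and $\|\eta_\delta''\|_{L_\infty}\lesssim\delta^{-1}$; fix $0\le\varphi\in C_c^\infty([0,T))$ and, for $\theta,\varsigma>0$, the test function
\[
\phi_{\theta,\varsigma}(t,x,s,y):=\varrho_\varsigma(x-y)\,\rho_\theta(t-s)\,\varphi\!\big(\tfrac{t+s}{2}\big).
\]
For frozen $(s,y)$ insert into \eqref{eq:entropy formula-1} for $(u,\nu)$ the entropy $\eta_\delta(\cdot-\tilde u(s,y))$ and the test function $\phi_{\theta,\varsigma}(\cdot,\cdot,s,y)$; symmetrically, for frozen $(t,x)$ insert into \eqref{eq:entropy formula-1} for $(\tilde u,\tilde\nu)$ the entropy $\eta_\delta(\cdot-u(t,x))$ and $\phi_{\theta,\varsigma}(t,x,\cdot,\cdot)$. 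Integrate each over the remaining pair of variables, take expectations, and add the two inequalities.

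\medskip

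I would then dispose of the non-obstacle terms as in the unconstrained theory. Because $\operatorname{supp}\rho_\theta\subset(0,\theta)$, on $\operatorname{supp}\phi_{\theta,\varsigma}$ one has $t>s$, so the It\^o integral coming from $(u,\nu)$ has an adapted integrand and zero mean; the It\^o integral coming from $(\tilde u,\tilde\nu)$ is instead non-adapted (its integrand involves $u(t,x)$ with $t>s$) and equals, after taking expectations, $\mathbb{E}\int_{t,x}H(t,x,u(t,x)\mid\tilde u,\phi_{\theta,\varsigma},\eta_\delta')$, which the $(\star)$-property of $u$ (Definition~\ref{def:star property}) bounds by $C\theta^{1-\mu}+\mathcal{B}(u,\tilde u\mid\phi_{\theta,\varsigma},\eta_\delta')$; the term $\mathcal{B}$ then combines with the second-order It\^o corrections $\llbracket a^{ij}\eta_\delta'\rrbracket$, $\llbracket a^{ij}_{x_j}\eta_\delta'+b^i_r\eta_\delta'\rrbracket,\dots$ from both inequalities and vanishes when $\theta\to0$ and then $\varsigma\to0$, exactly as in \cite[\S4]{dareiotis2020nonlinear}. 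The time-derivative and initial-data terms, using that the $\rho_\theta'$-parts cancel between the two inequalities and letting $\theta\to0$, collapse to $-\int_0^T\varphi'(t)\,\mathbb{E}\|u(t,\cdot)-\tilde u(t,\cdot)\|_{L_1(\mathbb{T}^d)}\,\mathrm{d}t$ together with an initial term bounded by $C\,\mathbb{E}\|\xi-\tilde\xi\|_{L_1(\mathbb{T}^d)}$ (here \eqref{eq:assumption on xi} is also used to keep the obstacle terms meaningful at $t=0$). The degenerate-diffusion terms built from $\llbracket\Phi'\eta_\delta'\rrbracket$, $\llbracket a^{ij}\eta_\delta'\rrbracket$ and the entropy-dissipation defects $-\eta_\delta''|\nabla\llbracket\sqrt{\Phi'}\rrbracket(u)|^2$ of both solutions are handled by the Carrillo-type estimate, in which the mixed parabolic terms are absorbed by the two dissipation defects so that their total contribution is non-positive in the limit $\varsigma\to0$; this is where the regularity $\llbracket\sqrt{\Phi'}\rrbracket(u),\llbracket\sqrt{\Phi'}\rrbracket(\tilde u)\in L_2(\Omega_T;H^1)$ is indispensable. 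Finally, $\|f_r\|_{L_\infty}\le K$ turns the reaction terms into a contribution $\le C\int_0^T\varphi(t)\,\mathbb{E}\|u(t,\cdot)-\tilde u(t,\cdot)\|_{L_1(\mathbb{T}^d)}\,\mathrm{d}t$, up to an error vanishing with $\varsigma$.

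\medskip

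The heart of the matter, and the step I expect to be hardest, is the pair of obstacle terms $-\mathbb{E}\iint\eta_\delta'(\psi(t,x)-\tilde u(s,y))\,\phi_{\theta,\varsigma}\,\nu(\mathrm{d}x\,\mathrm{d}t)\,\mathrm{d}s\,\mathrm{d}y$ and its mirror with $\tilde\nu$. First, $\tilde u(s,y)\ge\psi(s,y)$ and the monotonicity of $\eta_\delta'$ give the pointwise bound $\eta_\delta'(\psi(t,x)-\tilde u(s,y))\le\eta_\delta'(\psi(t,x)-\psi(s,y))$, removing $\tilde u$. On $\operatorname{supp}\phi_{\theta,\varsigma}$ one has $|t-s|\lesssim\theta$ and $|x-y|\lesssim\varsigma$, so a second-order Taylor expansion of $\psi\in C^2_x(\bar Q_T)$ yields $\psi(t,x)-\psi(s,y)=[\psi(t,x)-\psi(s,x)]-\nabla_x\psi(s,x)\cdot(y-x)+O(\varsigma^2)$. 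The first-order increment $\nabla_x\psi(s,x)\cdot(y-x)$ is eliminated by exploiting the symmetry of the test function --- the parity, after symmetrization, of the spatial kernel in $x-y$ together with the symmetry of $\varphi(\tfrac{t+s}{2})$ in $(t,s)$ --- while the remainder $O(\varsigma^2)$, once passed through $\eta_\delta'$, contributes at most $C\delta^{-2}\varsigma^2$, and the leftover $\eta_\delta'(\psi(t,x)-\psi(s,x))$ is controlled by $\delta^{-1}\omega_\psi(\theta)$, $\omega_\psi$ denoting the modulus of continuity of $\psi$ on $\bar Q_T$. Since $\nu,\tilde\nu\ge0$ with $\mathbb{E}[\nu(Q_T)]+\mathbb{E}[\tilde\nu(Q_T)]<\infty$ and $\phi_{\theta,\varsigma}\ge0$, both obstacle terms are therefore $o(1)$ when $\theta\to0$ and then $\varsigma\to0$, for each fixed $\delta$. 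It is precisely this cancellation-by-symmetry of the first-order Taylor term, which requires $\psi$ to be twice differentiable in $x$, that dictates the hypothesis $\psi\in C^2_x(\bar Q_T)$: a crude Lipschitz bound on $\eta_\delta'$ applied directly to $\psi(t,x)-\psi(s,y)$ would not survive the order of limits forced on us by the previous step. Collecting all contributions, letting $\theta\to0$, then $\varsigma\to0$, then $\delta\to0$, and recalling $\eta_\delta(r)\to|r|$, we arrive at
\[
-\int_0^T\varphi'(t)\,\mathbb{E}\|u(t,\cdot)-\tilde u(t,\cdot)\|_{L_1(\mathbb{T}^d)}\,\mathrm{d}t
\le C\,\mathbb{E}\|\xi-\tilde\xi\|_{L_1(\mathbb{T}^d)}
+C\int_0^T\varphi(t)\,\mathbb{E}\|u(t,\cdot)-\tilde u(t,\cdot)\|_{L_1(\mathbb{T}^d)}\,\mathrm{d}t
\]
for every admissible $\varphi$; specializing $\varphi$ to approximate $e^{-Ct}\mathbf{1}_{[0,t_0]}$ and applying Gr\"onwall's inequality yields \eqref{eq:L1_for diff solution u} with $C=C(d,K,T)$.
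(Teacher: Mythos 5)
Your overall strategy coincides with the paper's: Kruzhkov doubling, the $(\star)$-property to control the non-adapted stochastic integral, then — for the obstacle pair — using $u,\tilde u\ge\psi$ and the monotonicity of $\eta_\delta'$ to replace $\tilde u$ (resp.\ $u$) by $\psi$, a second-order Taylor expansion of $\psi$ in the spatial variable, and cancellation of the first-order Taylor term via the oddness of $\eta_\delta'$ against the even spatial kernel. This is exactly the paper's mechanism, and your diagnosis of why $\psi\in C^2_x$ is needed is the right one.

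There are, however, two quantitative points in your limit scheme that would break the argument as written and that the paper's proof handles differently. First, the remainder of the Taylor expansion passed through $\eta_\delta'$ (with $\|\eta_\delta''\|_{L_\infty}\lesssim\delta^{-1}$) contributes at most $C\delta^{-1}\varsigma^2\,\mathbb{E}\bigl[\nu(Q_T)+\tilde\nu(Q_T)\bigr]$, not $C\delta^{-2}\varsigma^2$; the extra power of $\delta^{-1}$ you wrote down is not there and, more importantly, would be fatal. Second, you cannot take the limits $\varsigma\to0$ then $\delta\to0$ sequentially: the degenerate-diffusion error from $N_2+N_6$ is of order $\varsigma^{-2}\delta^{2\alpha}$, while the conservative-noise errors from $N_3+N_7$ contribute $\delta\varsigma^{-1}$ and $\delta^2\varsigma^{-2}$; each of these blows up if the other parameter is held fixed. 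The paper therefore couples them via $\delta=\varsigma^{2\vartheta}$ with $\vartheta\in(m^{-1}\vee 2^{-1},1)$ and $\alpha\in(1/(2\vartheta),1\wedge(m/2))$ and then sends $\varsigma\to0$. Under this coupling the correct obstacle error $\delta^{-1}\varsigma^2=\varsigma^{2-2\vartheta}\to0$ (since $\vartheta<1$), whereas your $\delta^{-2}\varsigma^2=\varsigma^{2-4\vartheta}$ would diverge (since $\vartheta>1/2$) — precisely the regime you need for the other terms. So you should correct the exponent of $\delta$ in the obstacle remainder to $\delta^{-1}$ and present the $\delta,\varsigma$ passage as a single coupled limit rather than two successive ones; with those fixes the argument matches the paper's.
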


\begin{proof}
For $\theta>0$ small enough and non-negative $\varphi\in C_{c}^{\infty}((0,T))$,
we introduce
\[
\phi_{\varsigma}(t,x,y)=\varrho_{\varsigma}(x-y)\varphi(t),\quad\phi_{\theta,\varsigma}(t,x,s,y):=\phi_{\varsigma}(\frac{t+s}{2},x,y)\rho_{\theta}(t-s).
\]
Furthermore, for each $\delta>0$, we define the function $\eta_{\delta}\in C^{2}(\mathbb{R})$
by
\[
\eta_{\delta}(0)=\eta_{\delta}^{\prime}(0)=0,\quad\eta_{\delta}^{\prime\prime}(r)=\rho_{\delta}(|r|).
\]
Thus, we have
\[
\big|\eta_{\delta}(r)-|r|\big|\leq\delta,\quad\mathrm{supp}\,\eta_{\delta}^{\prime\prime}\subset[-\delta,\delta],\quad\int_{\mathbb{R}}|\eta_{\delta}^{\prime\prime}(r)|\mathrm{d}r\leq2,\quad|\eta_{\delta}^{\prime\prime}|\leq2\delta^{-1}.
\]

Fix $(z,s,y)\in\mathbb{R}\times Q_{T}$. Since $(u,\nu)$ is an entropy
solution to $\Pi_{\psi}(\Phi,f,\xi)$, using the entropy inequality
of $(u,\nu)$ with $\big(\eta_{\delta}(\cdot-z),\phi_{\theta,\varsigma}(\cdot,\cdot,s,y)\big)$
instead of $\big(\eta,\phi\big)$, we have
\begin{align*}
 & -\int_{t,x}\eta_{\delta}(u-z)\partial_{t}\phi_{\theta,\varsigma}-\int_{Q_{T}}\eta_{\delta}^{\prime}(\psi(t,x)-z)\phi_{\theta,\varsigma}\nu(\mathrm{d}x\mathrm{d}t)\\
 & \leq\int_{t,x}\llbracket\Phi^{\prime}\eta_{\delta}^{\prime}(\cdot-z)\rrbracket(u)\Delta_{x}\phi_{\theta,\varsigma}\\
 & \quad+\int_{t,x}\llbracket a^{ij}\eta_{\delta}^{\prime}(\cdot-z)\rrbracket(u)\partial_{x_{i}x_{j}}\phi_{\theta,\varsigma}\\
 & \quad+\int_{t,x}\Big(\llbracket a_{x_{j}}^{ij}\eta_{\delta}^{\prime}(\cdot-z)+b_{r}^{i}\eta_{\delta}^{\prime}(\cdot-z)\rrbracket(x,u)-2\eta_{\delta}^{\prime}(u-z)b^{i}(x,u)\Big)\partial_{x_{i}}\phi_{\theta,\varsigma}\\
 & \quad+\int_{t,x}\Big(-\eta_{\delta}^{\prime}(u-z)b_{x_{i}}^{i}(x,u)+\llbracket b_{rx_{i}}^{i}\eta_{\delta}^{\prime}(\cdot-z)\rrbracket(x,u)\Big)\phi_{\theta,\varsigma}\\
 & \quad+\int_{t,x}\eta_{\delta}^{\prime}(u-z)f(t,x,u)\phi_{\theta,\varsigma}\\
 & \quad+\int_{t,x}\Big(\frac{1}{2}\eta_{\delta}^{\prime\prime}(u-z)\sum_{k=1}^{\infty}|\sigma_{x_{i}}^{ik}(x,u)|^{2}-\eta_{\delta}^{\prime\prime}(u-z)|\nabla_{x}\llbracket\sqrt{\Phi^{\prime}}\rrbracket(u)|^{2}\Big)\phi_{\theta,\varsigma}\\
 & \quad+\int_{0}^{T}\int_{x}\Big(\eta_{\delta}^{\prime}(u-z)\phi_{\theta,\varsigma}\sigma_{x_{i}}^{ik}(x,u)\\
 & \quad-\llbracket\sigma_{rx_{i}}^{ik}\eta_{\delta}^{\prime}(\cdot-z)\rrbracket(x,u)\phi_{\theta,\varsigma}-\llbracket\sigma_{r}^{ik}\eta_{\delta}^{\prime}(\cdot-z)\rrbracket(x,u)\partial_{x_{i}}\phi_{\theta,\varsigma}\Big)\mathrm{d}W_{t}^{k},
\end{align*}
where $u=u(t,x)$. Notice that each term in the inequality is continuous
in $(z,s,y)$. We take $z=\tilde{u}(s,y)$ by convolution and integrate
over $(s,y)\in Q_{T}$. By taking expectations, we have 

\begin{align} 
& -\mathbb{E}\int_{t,x,s,y}\eta_{\delta}(u-\tilde{u})\partial_{t}\phi_{\theta,\varsigma}-\mathbb{E}\int_{s,y}\int_{Q_{T}}\eta_{\delta}^{\prime}(\psi(t,x)-\tilde{u})\phi_{\theta,\varsigma}\nu(\mathrm{d}x\mathrm{d}t)\nonumber\\
 & \leq\mathbb{E}\int_{t,x,s,y}\llbracket\Phi^{\prime}\eta_{\delta}^{\prime}(\cdot-\tilde{u}\rrbracket(u)\Delta_{x}\phi_{\theta,\varsigma}\nonumber\\
 & \quad+\mathbb{E}\int_{t,x,s,y}\llbracket a^{ij}\eta_{\delta}^{\prime}(\cdot-\tilde{u})\rrbracket(u)\partial_{x_{i}x_{j}}\phi_{\theta,\varsigma}\nonumber\\
 & \quad+\mathbb{E}\int_{t,x,s,y}\llbracket a_{x_{j}}^{ij}\eta_{\delta}^{\prime}(\cdot-\tilde{u})+b_{r}^{i}\eta_{\delta}^{\prime}(\cdot-\tilde{u})\rrbracket(x,u)\partial_{x_{i}}\phi_{\theta,\varsigma}\nonumber\\
 & \quad-\mathbb{E}\int_{t,x,s,y}2\eta_{\delta}^{\prime}(u-\tilde{u})b^{i}(x,u)\partial_{x_{i}}\phi_{\theta,\varsigma}\nonumber\\
 & \quad+\mathbb{E}\int_{t,x,s,y}\Big(-\eta_{\delta}^{\prime}(u-\tilde{u})b_{x_{i}}^{i}(x,u)+\llbracket b_{rx_{i}}^{i}\eta_{\delta}^{\prime}(\cdot-\tilde{u})\rrbracket(x,u)\Big)\phi_{\theta,\varsigma} \label{eq:entropy doubling}\\
 & \quad+\mathbb{E}\int_{t,x,s,y}\eta_{\delta}^{\prime}(u-\tilde{u})f(t,x,u)\phi_{\theta,\varsigma}\nonumber\\
 & \quad+\mathbb{E}\int_{t,x,s,y}\Big(\frac{1}{2}\eta_{\delta}^{\prime\prime}(u-\tilde{u})\sum_{k=1}^{\infty}|\sigma_{x_{i}}^{ik}(x,u)|^{2}-\eta_{\delta}^{\prime\prime}(u-\tilde{u})|\nabla_{x}\llbracket\sqrt{\Phi^{\prime}}\rrbracket(u)|^{2}\Big)\phi_{\theta,\varsigma}\nonumber\\
&\quad+\mathbb{E}\int_{s,y}\Big[\int_{0}^{T}\int_{x}\Big(\eta_{\delta}^{\prime}(u-z)\phi_{\theta,\varsigma}\sigma_{x_{i}}^{ik}(x,u)\nonumber\\
 & \quad-\llbracket\sigma_{rx_{i}}^{ik}\eta_{\delta}^{\prime}(\cdot-z)\rrbracket(x,u)\phi_{\theta,\varsigma}-\llbracket\sigma_{r}^{ik}\eta_{\delta}^{\prime}(\cdot-z)\rrbracket(x,u)\partial_{x_{i}}\phi_{\theta,\varsigma}\Big)\mathrm{d}W_{t}^{k}\Big]_{z=\tilde{u}}, \nonumber
\end{align}
where $u=u(t,x)$ and $\tilde{u}=\tilde{u}(s,y)$. Since the integrand
of the stochastic integral on the right-hand side of (\ref{eq:entropy doubling})
vanish on $[0,s]$, the stochastic integral is zero. Estimate \eqref{eq:entropy doubling} is derived from the aspect of $(u,\nu)$.

Similarly, for each $(z,t,x)\in\mathbb{R}\times Q_{T}$, we apply
the entropy inequality of $(\tilde{u},\tilde{\nu})$ with $\eta(r):=\eta_{\delta}(r-z)$
and $\phi(s,y):=\phi_{\theta,\varsigma}(t,x,s,y)$. After substituting
$z=u(t,x)$ by convolution, integrating over $(t,x)\in Q_{T}$ and
taking expectations, we obtain a similar estimate from the aspect
of $(\tilde{u},\tilde{\nu})$. Notice that the stochastic integral in the estimate can be rewrite as 
$$
\mathbb{E}\int_{t,x}H(t,x,u(t,x)|\tilde{u},\phi_{\theta,\varsigma},\eta_\delta^\prime).
$$
Applying the ($\star$)-property of $u$,
this term is controlled by $C\theta^{1-\mu}+\mathcal{B}(u,\tilde{u}|\phi_{\theta,\varsigma},\eta_{\delta}^{\prime})$,
where $\mu=(3m+5)/(4m+4)<1$ and the constant $C$ is independent
of $\theta$. 

Adding these two estimates together, 
we have
\begin{equation}\label{eq:add doubling before theta}
-\mathbb{E}\int_{t,x,s,y}\eta_{\delta}(u-\tilde{u})\partial_{t}\phi_{\theta,\varsigma}\leq\sum_{\ell=1}^{7}J_{\ell},
\end{equation}
where
\begin{align*}
J_{1} & :=\mathbb{E}\int_{t,x,s,y}\eta_{\delta}^{\prime}(u-\tilde{u})f(t,x,u)\phi_{\theta,\varsigma}+\mathbb{E}\int_{s,y}\int_{Q_{T}}\eta_{\delta}^{\prime}(\psi(t,x)-\tilde{u})\phi_{\theta,\varsigma}\nu(\mathrm{d}x\mathrm{d}t)\\
 & \quad-\mathbb{E}\int_{t,x,s,y}\eta_{\delta}^{\prime}(u-\tilde{u})f(s,y,\tilde{u})\phi_{\theta,\varsigma}-\mathbb{E}\int_{t,x}\int_{Q_{T}}\eta_{\delta}^{\prime}(u-\psi(s,y))\phi_{\theta,\varsigma}\tilde{\nu}(\mathrm{d}y\mathrm{d}s),\\
J_{2} & :=\mathbb{E}\int_{t,x,s,y}\llbracket\Phi^{\prime}\eta_{\delta}^{\prime}(\cdot-\tilde{u})\rrbracket(u)\Delta_{x}\phi_{\theta,\varsigma}-\mathbb{E}\int_{t,x,s,y}\llbracket\Phi^{\prime}\eta_{\delta}^{\prime}(u-\cdot)\rrbracket(\tilde{u})\Delta_{y}\phi_{\theta,\varsigma},\\
J_{3} & :=\mathbb{E}\int_{t,x,s,y}\llbracket a^{ij}\eta_{\delta}^{\prime}(\cdot-\tilde{u})\rrbracket(x,u)\partial_{x_{i}x_{j}}\phi_{\theta,\varsigma}-\mathbb{E}\int_{t,x,s,y}\llbracket a^{ij}\eta_{\delta}^{\prime}(u-\cdot)\rrbracket(y,\tilde{u})\partial_{y_{i}y_{j}}\phi_{\theta,\varsigma}\\
 & \quad+\mathbb{E}\int_{t,x,s,y}\llbracket a_{x_{j}}^{ij}\eta_{\delta}^{\prime}(\cdot-\tilde{u})\rrbracket(x,u)\partial_{x_{i}}\phi_{\theta,\varsigma}-\mathbb{E}\int_{t,x,s,y}\llbracket a_{y_{j}}^{ij}\eta_{\delta}^{\prime}(u-\cdot)\rrbracket(y,\tilde{u})\partial_{y_{i}}\phi_{\theta,\varsigma}\\
 & \quad-2\mathbb{E}\int_{t,x,s,y}\eta_{\delta}^{\prime}(u-\tilde{u})b^{i}(x,u)\partial_{x_{i}}\phi_{\theta,\varsigma}+2\mathbb{E}\int_{t,x,s,y}\eta_{\delta}^{\prime}(u-\tilde{u})b^{i}(y,\tilde{u})\partial_{y_{i}}\phi_{\theta,\varsigma},\\
J_{4} & :=\mathbb{E}\int_{t,x,s,y}\llbracket b_{r}^{i}\eta_{\delta}^{\prime}(\cdot-\tilde{u})\rrbracket(x,u)\partial_{x_{i}}\phi_{\theta,\varsigma}-\mathbb{E}\int_{t,x,s,y}\llbracket b_{r}^{i}\eta_{\delta}^{\prime}(u-\cdot)\rrbracket(y,\tilde{u})\partial_{y_{i}}\phi_{\theta,\varsigma}\\
 & \quad-\mathbb{E}\int_{t,x,s,y}\eta_{\delta}^{\prime}(u-\tilde{u})b_{x_{i}}^{i}(x,u)\phi_{\theta,\varsigma}+\mathbb{E}\int_{t,x,s,y}\eta_{\delta}^{\prime}(u-\tilde{u})b_{y_{i}}^{i}(y,\tilde{u})\phi_{\theta,\varsigma}\\
 & \quad+\mathbb{E}\int_{t,x,s,y}\llbracket b_{rx_{i}}^{i}\eta_{\delta}^{\prime}(\cdot-\tilde{u})\rrbracket(x,u)\phi_{\theta,\varsigma}-\mathbb{E}\int_{t,x,s,y}\llbracket b_{ry_{i}}^{i}\eta_{\delta}^{\prime}(u-\cdot)\rrbracket(y,\tilde{u})\phi_{\theta,\varsigma},\\
J_{5} & :=\mathbb{E}\int_{t,x,s,y}\frac{1}{2}\eta_{\delta}^{\prime\prime}(u-\tilde{u})\sum_{k=1}^{\infty}\Big(|\sigma_{x_{i}}^{ik}(x,u)|^{2}+|\sigma_{y_{i}}^{ik}(y,\tilde{u})|^{2}\Big)\phi_{\theta,\varsigma},\\
J_{6} & :=\mathbb{E}\int_{t,x,s,y}-\eta_{\delta}^{\prime\prime}(u-\tilde{u})\Big(|\nabla_{x}\llbracket\sqrt{\Phi^{\prime}}\rrbracket(u)|^{2}+|\nabla_{y}\llbracket\sqrt{\Phi^{\prime}}\rrbracket(\tilde{u})|^{2}\Big)\phi_{\theta,\varsigma},\\
J_{7} & :=C\theta^{1-\mu}+\mathcal{B}(u,\tilde{u}|\phi_{\theta,\varsigma},\eta_{\delta}^{\prime}),
\end{align*}
and $u=u(t,x)$ and $\tilde{u}=\tilde{u}(s,y)$. For the terms involving the measures $\nu$ and $\tilde{\nu}$ in $J_{1}$, with the fact that $u,\tilde{u}\geq\psi$ and $\eta_{\delta}^{\prime\prime}\geq0$,
we have
\begin{align*}
 & \mathbb{E}\int_{s,y}\int_{Q_{T}}\phi_{\theta,\varsigma}\eta_{\delta}^{\prime}(\psi(t,x)-\tilde{u}(s,y))\nu(\mathrm{d}x\mathrm{d}t)\\
 & \quad-\mathbb{E}\int_{t,x}\int_{Q_{T}}\phi_{\theta,\varsigma}\eta_{\delta}^{\prime}(u(t,x)-\psi(s,y))\tilde{\nu}(\mathrm{d}y\mathrm{d}s)\\
 & \leq\mathbb{E}\int_{s,y}\int_{Q_{T}}\phi_{\theta,\varsigma}\eta_{\delta}^{\prime}(\psi(t,x)-\psi(s,y))\nu(\mathrm{d}x\mathrm{d}t)\\
 & \quad-\mathbb{E}\int_{t,x}\int_{Q_{T}}\phi_{\theta,\varsigma}\eta_{\delta}^{\prime}(\psi(t,x)-\psi(s,y))\tilde{\nu}(\mathrm{d}y\mathrm{d}s)\\
 & \leq\mathbb{E}\int_{s,y}\int_{Q_{T}}\phi_{\theta,\varsigma}\eta_{\delta}^{\prime}(\psi(t,x)-\psi(t,y))\nu(\mathrm{d}x\mathrm{d}t)\\
 & \quad-\mathbb{E}\int_{t,x}\int_{Q_{T}}\phi_{\theta,\varsigma}\eta_{\delta}^{\prime}(\psi(s,x)-\psi(s,y))\tilde{\nu}(\mathrm{d}y\mathrm{d}s)\\
 & \quad+\int_{s,y}\int_{Q_T}\Vert \eta_\delta^{\prime\prime}\Vert_{L_{\infty}}\phi_{\theta,\varsigma}|\psi(s,y)-\psi(t,y)|\phi_{\theta,\varsigma}\nu(\mathrm{d}x\mathrm{d}t)\\
 & \quad+\int_{t,x}\int_{Q_T}\Vert \eta_\delta^{\prime\prime}\Vert_{L_{\infty}}\phi_{\theta,\varsigma}|\psi(t,x)-\psi(s,x)|\phi_{\theta,\varsigma}\tilde{\nu}(\mathrm{d}y\mathrm{d}s)\\ 
 & \leq\mathbb{E}\int_{s,y}\int_{Q_{T}}\phi_{\theta,\varsigma}\eta_{\delta}^{\prime}(\psi(t,x)-\psi(t,y))\nu(\mathrm{d}x\mathrm{d}t)\\
 & \quad-\mathbb{E}\int_{t,x}\int_{Q_{T}}\phi_{\theta,\varsigma}\eta_{\delta}^{\prime}(\psi(s,x)-\psi(s,y))\tilde{\nu}(\mathrm{d}y\mathrm{d}s)\\
 & \quad+C\delta^{-1}\varpi(\theta)\Big[\mathbb{E}\nu(Q_T)+\mathbb{E}\tilde{\nu}(Q_T)\Big],
\end{align*}
where $\varpi$ is the modulus of continuity of $\psi$. Taking $\theta\rightarrow0^{+}$ in \eqref{eq:add doubling before theta}
and using \cite[Proposition 3.5]{dareiotis2019entropy}, with the symmetry of $\eta_{\delta}$,
we have
\begin{equation}
-\mathbb{E}\int_{t,x,y}\eta_{\delta}(u-\tilde{u})\partial_{t}\phi_{\varsigma}\leq\sum_{\ell=1}^{7}N_{\ell},\label{eq:inequalitywitht-1-1}
\end{equation}
where
\begin{align*}
N_{1} & :=\mathbb{E}\int_{t,x,y}\eta_{\delta}^{\prime}(u-\tilde{u})f(t,x,u)\phi_{\varsigma}+\mathbb{E}\int_{y}\int_{Q_{T}}\eta_{\delta}^{\prime}(\psi(t,x)-\psi(t,y))\phi_{\varsigma}\nu(\mathrm{d}x\mathrm{d}t)\\
 & \quad-\mathbb{E}\int_{t,x,y}\eta_{\delta}^{\prime}(u-\tilde{u})f(t,y,\tilde{u})\phi_{\varsigma}-\mathbb{E}\int_{x}\int_{Q_{T}}\eta_{\delta}^{\prime}(\psi(t,x)-\psi(t,y))\phi_{\varsigma}\tilde{\nu}(\mathrm{d}y\mathrm{d}t),\\
N_{2} & :=\mathbb{E}\int_{t,x,y}\llbracket\Phi^{\prime}\eta_{\delta}^{\prime}(\cdot-\tilde{u})\rrbracket(u)\Delta_{x}\phi_{\varsigma}-\mathbb{E}\int_{t,x,y}\llbracket\Phi^{\prime}\eta_{\delta}^{\prime}(u-\cdot)\rrbracket(\tilde{u})\Delta_{y}\phi_{\varsigma},\\
N_{3} & :=\mathbb{E}\int_{t,x,y}\llbracket a^{ij}\eta_{\delta}^{\prime}(\cdot-\tilde{u})\rrbracket(x,u)\partial_{x_{i}x_{j}}\phi_{\varsigma}-\mathbb{E}\int_{t,x,y}\llbracket a^{ij}\eta_{\delta}^{\prime}(u-\cdot)\rrbracket(y,\tilde{u})\partial_{y_{i}y_{j}}\phi_{\varsigma}\\
 & \quad+\mathbb{E}\int_{t,x,y}\llbracket a_{x_{j}}^{ij}\eta_{\delta}^{\prime}(\cdot-\tilde{u})\rrbracket(x,u)\partial_{x_{i}}\phi_{\varsigma}-\mathbb{E}\int_{t,x,y}\llbracket a_{y_{j}}^{ij}\eta_{\delta}^{\prime}(u-\cdot)\rrbracket(y,\tilde{u})\partial_{y_{i}}\phi_{\varsigma}\\
 & \quad-2\mathbb{E}\int_{t,x,y}\eta_{\delta}^{\prime}(u-\tilde{u})b^{i}(x,u)\partial_{x_{i}}\phi_{\varsigma}+2\mathbb{E}\int_{t,x,y}\eta_{\delta}^{\prime}(u-\tilde{u})b^{i}(y,\tilde{u})\partial_{y_{i}}\phi_{\varsigma},\\
N_{4} & :=\mathbb{E}\int_{t,x,y}\llbracket b_{r}^{i}\eta_{\delta}^{\prime}(\cdot-\tilde{u})\rrbracket(x,u)\partial_{x_{i}}\phi_{\varsigma}-\mathbb{E}\int_{t,x,y}\llbracket b_{r}^{i}\eta_{\delta}^{\prime}(u-\cdot)\rrbracket(y,\tilde{u})\partial_{y_{i}}\phi_{\varsigma}\\
 & \quad-\mathbb{E}\int_{t,x,y}\eta_{\delta}^{\prime}(u-\tilde{u})b_{x_{i}}^{i}(x,u)\phi_{\varsigma}+\mathbb{E}\int_{t,x,y}\eta_{\delta}^{\prime}(u-\tilde{u})b_{y_{i}}^{i}(y,\tilde{u})\phi_{\varsigma}\\
 & \quad+\mathbb{E}\int_{t,x,y}\llbracket b_{rx_{i}}^{i}\eta_{\delta}^{\prime}(\cdot-\tilde{u})\rrbracket(x,u)\phi_{\varsigma}-\mathbb{E}\int_{t,x,y}\llbracket b_{ry_{i}}^{i}\eta_{\delta}^{\prime}(u-\cdot)\rrbracket(y,\tilde{u})\phi_{\varsigma},\\
N_{5} & :=\mathbb{E}\int_{t,x,y}\frac{1}{2}\eta_{\delta}^{\prime\prime}(u-\tilde{u})\sum_{k=1}^{\infty}\Big(|\sigma_{x_{i}}^{ik}(x,u)|^{2}+|\sigma_{y_{i}}^{ik}(y,\tilde{u})|^{2}\Big)\phi_{\varsigma},\\
N_{6} & :=\mathbb{E}\int_{t,x,y}-\eta_{\delta}^{\prime\prime}(u-\tilde{u})\Big(|\nabla_{x}\llbracket\sqrt{\Phi^{\prime}}\rrbracket(u)|^{2}+|\nabla_{y}\llbracket\sqrt{\Phi^{\prime}}\rrbracket(\tilde{u})|^{2}\Big)\phi_{\varsigma},\\
N_{7} & =\sum_{l=1}^{9}N_{7,l}:=-\mathbb{E}\int_{t,x,y}\partial_{y_{i}x_{j}}\phi_{\varsigma}\int_{\tilde{u}}^{u}\int_{r}^{\tilde{u}}\eta_{\delta}^{\prime\prime}(r-\tilde{r})\sigma_{r}^{jk}(x,r)\sigma_{r}^{ik}(y,\tilde{r})\mathrm{d}\tilde{r}\mathrm{d}r\\
 & \quad-\mathbb{E}\int_{t,x,y}\partial_{y_{i}}\phi_{\varsigma}\int_{\tilde{u}}^{u}\int_{r}^{\tilde{u}}\eta_{\delta}^{\prime\prime}(r-\tilde{r})\sigma_{rx_{j}}^{jk}(x,r)\sigma_{r}^{ik}(y,\tilde{r})\mathrm{d}\tilde{r}\mathrm{d}r\\
 & \quad+\mathbb{E}\int_{t,x,y}\partial_{y_{i}}\phi_{\varsigma}\int_{u}^{\tilde{u}}\eta_{\delta}^{\prime\prime}(u-\tilde{r})\sigma_{x_{j}}^{jk}(x,u)\sigma_{r}^{ik}(y,\tilde{r})\mathrm{d}\tilde{r}\\
 & \quad-\mathbb{E}\int_{t,x,y}\partial_{x_{j}}\phi_{\varsigma}\int_{\tilde{u}}^{u}\int_{r}^{\tilde{u}}\eta_{\delta}^{\prime\prime}(r-\tilde{r})\sigma_{r}^{jk}(x,r)\sigma_{ry_{i}}^{ik}(y,\tilde{r})\mathrm{d}\tilde{r}\mathrm{d}r\\
 & \quad-\mathbb{E}\int_{t,x,y}\phi_{\varsigma}\int_{\tilde{u}}^{u}\int_{r}^{\tilde{u}}\eta_{\delta}^{\prime\prime}(r-\tilde{r})\sigma_{rx_{j}}^{jk}(x,r)\sigma_{ry_{i}}^{ik}(y,\tilde{r})\mathrm{d}\tilde{r}\mathrm{d}r\\
 & \quad+\mathbb{E}\int_{t,x,y}\phi_{\varsigma}\int_{u}^{\tilde{u}}\eta_{\delta}^{\prime\prime}(u-\tilde{r})\sigma_{x_{j}}^{jk}(x,u)\sigma_{ry_{i}}^{ik}(y,\tilde{r})\mathrm{d}\tilde{r}\\
 & \quad-\mathbb{E}\int_{t,x,y}\partial_{x_{j}}\phi_{\varsigma}\int_{\tilde{u}}^{u}\eta_{\delta}^{\prime\prime}(r-\tilde{u})\sigma_{r}^{jk}(x,r)\sigma_{y_{i}}^{ik}(y,\tilde{u})\mathrm{d}r\\
 & \quad+\mathbb{E}\int_{t,x,y}\phi_{\varsigma}\int_{\tilde{u}}^{u}\eta_{\delta}^{\prime\prime}(r-\tilde{u})\sigma_{rx_{j}}^{jk}(x,r)\sigma_{y_{i}}^{ik}(y,\tilde{u})\mathrm{d}r\\
 & \quad-\mathbb{E}\int_{t,x,y}\phi_{\varsigma}\eta_{\delta}^{\prime\prime}(u-\tilde{u})\sigma_{x_{j}}^{jk}(x,u)\sigma_{y_{i}}^{ik}(y,\tilde{u}),
\end{align*}
and $u=u(t,x)$ and $\tilde{u}=\tilde{u}(t,y)$. 

The term $N_{1}$ involving the measures $\nu$ and $\tilde{\nu}$ is derived from the obstacle problem.
We notice that
\begin{align}
 & N_{1}=\mathbb{E}\int_{t,x,y}\eta_{\delta}^{\prime}(u-\tilde{u})\Big(f(t,x,u)-f(t,x,\tilde{u})\Big)\phi_{\varsigma}\nonumber\\
 & \quad+\mathbb{E}\int_{t,x,y}\eta_{\delta}^{\prime}(u-\tilde{u})\Big(f(t,x,\tilde{u})-f(t,y,\tilde{u})\Big)\phi_{\varsigma}\nonumber\\
 & \quad+\mathbb{E}\int_{y}\int_{Q_{T}}\phi_{\varsigma}\eta_{\delta}^{\prime}(\psi(t,x)-\psi(t,y))\nu(\mathrm{d}x\mathrm{d}t)\nonumber\\
 & \quad-\mathbb{E}\int_{x}\int_{Q_{T}}\phi_{\varsigma}\eta_{\delta}^{\prime}(\psi(t,x)-\psi(t,y))\tilde{\nu}(\mathrm{d}y\mathrm{d}t)\label{eq:FGdiffL1-1-1-1}\\
 & \leq C\varsigma+C\mathbb{E}\int_{t,x,y}|u-\tilde{u}|\varrho_{\varsigma}(x-y)\varphi(t)\nonumber\\
 & \quad+\mathbb{E}\int_{y}\int_{Q_{T}}\phi_{\varsigma}\eta_{\delta}^{\prime}(\psi(t,x)-\psi(t,y))\nu(\mathrm{d}x\mathrm{d}t)\nonumber\\
 & \quad-\mathbb{E}\int_{x}\int_{Q_{T}}\phi_{\varsigma}\eta_{\delta}^{\prime}(\psi(t,x)-\psi(t,y))\tilde{\nu}(\mathrm{d}y\mathrm{d}t).\nonumber
\end{align}
Thanks to $\psi\in C_{x}^{2}(\bar{Q}_{T})$, we apply the Lagrange's
mean value theorem to compute
\begin{equation}
\begin{aligned} & \mathbb{E}\int_{y}\int_{Q_{T}}\phi_{\varsigma}\eta_{\delta}^{\prime}(\psi(t,x)-\psi(t,y))\nu(\mathrm{d}x\mathrm{d}t)\\
 & \quad-\mathbb{E}\int_{x}\int_{Q_{T}}\phi_{\varsigma}\eta_{\delta}^{\prime}(\psi(t,x)-\psi(t,y))\tilde{\nu}(\mathrm{d}y\mathrm{d}t)\\
 & =\mathbb{E}\int_{y}\int_{Q_{T}}\phi_{\varsigma}\eta_{\delta}^{\prime}\Big(-\partial_{x_{i}}\psi(t,x)(y_{i}-x_{i})-\frac{1}{2}\partial_{x_{i}x_{j}}\psi(t,\Theta)(y_{i}-x_{i})(y_{j}-x_{j})\Big)\nu(\mathrm{d}x\mathrm{d}t)\\
 & \quad-\mathbb{E}\int_{x}\int_{Q_{T}}\phi_{\varsigma}\eta_{\delta}^{\prime}\Big(\partial_{y_{i}}\psi(t,y)(x_{i}-y_{i})+\frac{1}{2}\partial_{y_{i}y_{j}}\psi(t,\tilde{\Theta})(y_{i}-x_{i})(y_{j}-x_{j})\Big)\tilde{\nu}(\mathrm{d}y\mathrm{d}t),
\end{aligned}
\label{eq:L1_nu}
\end{equation}
where $\Theta$ and $\tilde{\Theta}$ are points on the segment between $x$ and $y$. Keeping
in mind that $\eta_{\delta}^{\prime}$ is odd, we observe that 
\[
\int_{y}\varrho_{\varsigma}(x-y)\eta_{\delta}^{\prime}\big(-\partial_{x_{i}}\psi(t,x)(y_{i}-x_{i})\big)=0
\]
for each $(t,x)\in\bar{Q}_T$, and
\[
\int_{x}\varrho_{\varsigma}(x-y)\eta_{\delta}^{\prime}\big(\partial_{y_{i}}\psi(t,y)(x_{i}-y_{i})\big)=0
\]
for each $(t,y)\in\bar{Q}_T$. Combining with (\ref{eq:L1_nu}), we can continue to compute
\begin{align*}
 & \mathbb{E}\int_{y}\int_{Q_{T}}\phi_{\varsigma}\eta_{\delta}^{\prime}(\psi(t,x)-\psi(t,y))\nu(\mathrm{d}x\mathrm{d}t)\\
 & \quad-\mathbb{E}\int_{x}\int_{Q_{T}}\phi_{\varsigma}\eta_{\delta}^{\prime}(\psi(t,x)-\psi(t,y))\tilde{\nu}(\mathrm{d}y\mathrm{d}t)\\
 & =\mathbb{E}\int_{y}\int_{Q_{T}}\phi_{\varsigma}\Big[\eta_{\delta}^{\prime}(-\partial_{x_{i}}\psi(t,x)(y_{i}-x_{i})-\frac{1}{2}\partial_{x_{i}x_{j}}\psi(t,\Theta)(y_{i}-x_{i})(y_{j}-x_{j}))\\
 & \quad\qquad-\eta_{\delta}^{\prime}(-\partial_{x_{i}}\psi(t,x)(y_{i}-x_{i}))\Big]\nu(\mathrm{d}x\mathrm{d}t)\\
 & \quad-\mathbb{E}\int_{x}\int_{Q_{T}}\phi_{\varsigma}\Big[\eta_{\delta}^{\prime}(\partial_{y_{i}}\psi(t,y)(x_{i}-y_{i})+\frac{1}{2}\partial_{y_{i}y_{j}}\psi(t,\tilde{\Theta})(y_{i}-x_{i})(y_{j}-x_{j}))\\
 & \quad\qquad-\eta_{\delta}^{\prime}(\partial_{y_{i}}\psi(t,y)(x_{i}-y_{i}))\Big]\tilde{\nu}(\mathrm{d}y\mathrm{d}t)\\
 & \leq C\mathbb{E}\int_{y}\int_{Q_{T}}\phi_{\varsigma}\Vert\eta_{\delta}^{\prime\prime}\Vert_{L_{\infty}}\cdot\big|\partial_{x_{i}x_{j}}\psi(t,\Theta)(y_{i}-x_{i})(y_{j}-x_{j})\big|\nu(\mathrm{d}x\mathrm{d}t)\\
 & \quad+C\mathbb{E}\int_{x}\int_{Q_{T}}\phi_{\varsigma}\Vert\eta_{\delta}^{\prime\prime}\Vert_{L_{\infty}}\cdot\big|\partial_{y_{i}y_{j}}\psi(t,\tilde{\Theta})(y_{i}-x_{i})(y_{j}-x_{j})\big|\tilde{\nu}(\mathrm{d}y\mathrm{d}t)\\
 & \leq C\delta^{-1}\varsigma^{2}\Big[\mathbb{E}\int_{y}\int_{Q_{T}}\phi_{\varsigma}\nu(\mathrm{d}x\mathrm{d}t)+\mathbb{E}\int_{x}\int_{Q_{T}}\phi_{\varsigma}\tilde{\nu}(\mathrm{d}y\mathrm{d}t)\Big]\\
 & \leq C\delta^{-1}\varsigma^{2}\Big[\mathbb{E}\nu(Q_{T})+\mathbb{E}\tilde{\nu}(Q_{T})\Big]\leq C\delta^{-1}\varsigma^{2}.
\end{align*}
Therefore, we have
\[
N_{1}\leq C(\varsigma+\delta^{-1}\varsigma^{2})+C\mathbb{E}\int_{t,x,y}|u-\tilde{u}|\varrho_{\varsigma}(x-y)\varphi(t).
\]

The method to deal with the terms $N_{i}$, $i=2,\ldots,7$ is similar to
that in \cite{dareiotis2020nonlinear,du2024entropy}. With Assumption
\ref{assu:assumption for sigma}, we have
\begin{align*}
N_{5}+N_{7,9} & =\mathbb{E}\int_{t,x,y}\frac{1}{2}\eta_{\delta}^{\prime\prime}(u-\tilde{u})\sum_{k=1}^{\infty}\Big(\sigma_{x_{i}}^{ik}(x,u)-\sigma_{y_{i}}^{ik}(y,\tilde{u})\Big)^{2}\phi_{\varsigma}\\
 & \leq C\varsigma^{2}\delta^{-1}+C\delta.
\end{align*}
For the estimate of $N_{2}=N_{2,1}+N_{2,2}$, since $\partial_{x_{i}}\phi_{\varsigma}=-\partial_{y_{i}}\phi_{\varsigma}$,
we have
\begin{align*}
N_{2,1} & :=\mathbb{E}\int_{t,x,y}\llbracket\Phi^{\prime}\eta_{\delta}^{\prime}(\cdot-\tilde{u})\rrbracket(u)\Delta_{x}\phi_{\varsigma}\\
 & =-\mathbb{E}\int_{t,x,y}\partial_{x_{i}y_{i}}\phi_{\varsigma}\int_{\tilde{u}}^{u}\eta_{\delta}^{\prime}(r-\tilde{u})\Phi^{\prime}(r)\mathrm{d}r\\
 & =-\mathbb{E}\int_{t,x,y}\mathbf{1}_{\{\tilde{u}\leq u\}}\partial_{x_{i}y_{i}}\phi_{\varsigma}\int_{\tilde{u}}^{u}\int_{\tilde{u}}^{u}\mathbf{1}_{\{\tilde{r}\leq r\}}\eta_{\delta}^{\prime\prime}(r-\tilde{r})\Phi^{\prime}(r)\mathrm{d}\tilde{r}\mathrm{d}r\\
 & \quad-\mathbb{E}\int_{t,x,y}\mathbf{1}_{\{u\leq\tilde{u}\}}\partial_{x_{i}y_{i}}\phi_{\varsigma}\int_{u}^{\tilde{u}}\int_{u}^{\tilde{u}}\mathbf{1}_{\{r\leq\tilde{r}\}}\eta_{\delta}^{\prime\prime}(r-\tilde{r})\Phi^{\prime}(r)\mathrm{d}\tilde{r}\mathrm{d}r.
\end{align*}
Similarly, we have
\begin{align*}
N_{2,2} & :=-\mathbb{E}\int_{t,x,y}\llbracket\Phi^{\prime}\eta_{\delta}^{\prime}(u-\cdot)\rrbracket(\tilde{u})\Delta_{y}\phi_{\varsigma}\\
 & =\mathbb{E}\int_{t,x,y}\partial_{x_{i}y_{i}}\phi_{\varsigma}\int_{u}^{\tilde{u}}\eta_{\delta}^{\prime}(u-\tilde{r})\Phi^{\prime}(\tilde{r})\mathrm{d}\tilde{r}\\
 & =-\mathbb{E}\int_{t,x,y}\mathbf{1}_{\{\tilde{u}\leq u\}}\partial_{x_{i}y_{i}}\phi_{\varsigma}\int_{\tilde{u}}^{u}\int_{\tilde{u}}^{u}\mathbf{1}_{\{\tilde{r}\leq r\}}\eta_{\delta}^{\prime\prime}(r-\tilde{r})\Phi^{\prime}(\tilde{r})\mathrm{d}r\mathrm{d}\tilde{r}\\
 & \quad-\mathbb{E}\int_{t,x,y}\mathbf{1}_{\{u\leq\tilde{u}\}}\partial_{x_{i}y_{i}}\phi_{\varsigma}\int_{u}^{\tilde{u}}\int_{u}^{\tilde{u}}\mathbf{1}_{\{r\leq\tilde{r}\}}\eta_{\delta}^{\prime\prime}(r-\tilde{r})\Phi^{\prime}(\tilde{r})\mathrm{d}r\mathrm{d}\tilde{r}.
\end{align*}
Notice that
\begin{align*}
N_{6} & :=-\mathbb{E}\int_{t,x,y}\eta_{\delta}^{\prime\prime}(u-\tilde{u})\Big(|\nabla_{x}\llbracket\sqrt{\Phi^{\prime}}\rrbracket(u)|^{2}+|\nabla_{y}\llbracket\sqrt{\Phi^{\prime}}\rrbracket(\tilde{u})|^{2}\Big)\phi_{\varsigma}\\
 & \leq-2\mathbb{E}\int_{t,x,y}\eta_{\delta}^{\prime\prime}(u-\tilde{u})\nabla_{x}\llbracket\sqrt{\Phi^{\prime}}\rrbracket(u)\cdot\nabla_{y}\llbracket\sqrt{\Phi^{\prime}}\rrbracket(\tilde{u})\phi_{\varsigma}\\
 & =-2\mathbb{E}\int_{t,x,y}\phi_{\varsigma}\partial_{x_{i}}\llbracket\sqrt{\Phi^{\prime}}\rrbracket(u)\partial_{y_{i}}\int_{0}^{\tilde{u}}\eta_{\delta}^{\prime\prime}(u-\tilde{r})\sqrt{\Phi^{\prime}}(\tilde{r})\mathrm{d}\tilde{r}\\
 & =2\mathbb{E}\int_{t,x,y}\partial_{y_{i}}\phi_{\varsigma}\partial_{x_{i}}\llbracket\sqrt{\Phi^{\prime}}\rrbracket(u)\int_{u}^{\tilde{u}}\eta_{\delta}^{\prime\prime}(u-\tilde{r})\sqrt{\Phi^{\prime}}(\tilde{r})\mathrm{d}\tilde{r}.
\end{align*}
 Using \cite[Remark 3.1]{dareiotis2019entropy}, we have
\begin{align*}
N_{6} & \leq2\mathbb{E}\int_{t,x,y}\partial_{x_{i}y_{i}}\phi_{\varsigma}\int_{u}^{\tilde{u}}\int_{r}^{\tilde{u}}\eta_{\delta}^{\prime\prime}(r-\tilde{r})\sqrt{\Phi^{\prime}}(\tilde{r})\sqrt{\Phi^{\prime}}(r)\mathrm{d}\tilde{r}\mathrm{d}r\\
 & =2\mathbb{E}\int_{t,x,y}\mathbf{1}_{\{\tilde{u}\leq u\}}\partial_{x_{i}y_{i}}\phi_{\varsigma}\int_{\tilde{u}}^{u}\int_{\tilde{u}}^{u}\mathbf{1}_{\{\tilde{r}\leq r\}}\eta_{\delta}^{\prime\prime}(r-\tilde{r})\sqrt{\Phi^{\prime}}(\tilde{r})\sqrt{\Phi^{\prime}}(r)\mathrm{d}\tilde{r}\mathrm{d}r\\
 & \quad+2\mathbb{E}\int_{t,x,y}\mathbf{1}_{\{u\leq\tilde{u}\}}\partial_{x_{i}y_{i}}\phi_{\varsigma}\int_{u}^{\tilde{u}}\int_{u}^{\tilde{u}}\mathbf{1}_{\{r\leq\tilde{r}\}}\eta_{\delta}^{\prime\prime}(r-\tilde{r})\sqrt{\Phi^{\prime}}(\tilde{r})\sqrt{\Phi^{\prime}}(r)\mathrm{d}\tilde{r}\mathrm{d}r.
\end{align*}
Then,
\[
N_{2}+N_{6}\leq\mathbb{E}\int_{t,x,y}|\partial_{x_{i}y_{i}}\phi_{\varsigma}|\int_{\tilde{u}}^{u}\int_{\tilde{u}}^{u}\eta_{\delta}^{\prime\prime}(r-\tilde{r})|\sqrt{\Phi^{\prime}}(r)-\sqrt{\Phi^{\prime}}(\tilde{r})|^{2}\mathrm{d}\tilde{r}\mathrm{d}r.
\]
Based on the estimates on $|\sqrt{\Phi^{\prime}}(r)-\sqrt{\Phi^{\prime}}(\tilde{r})|^{2}$
in the proof of~\cite[Theorem 4.1]{dareiotis2019entropy} which using
Assumption~\ref{assu:assumption for phi}, we have
\[
N_{2}+N_{6}\leq C\varsigma^{-2}\delta^{2\alpha}\mathbb{E}(1+\Vert u\Vert_{L_{m}(Q_{T})}^{m}+\Vert\tilde{u}\Vert_{L_{m}(Q_{T})}^{m}),
\]
where $\alpha\in(0,1\land(m/2))$. On the other hand, note that
\begin{align*}
N_{3} & =\sum_{i=1}^{3}N_{3,i}:=-\mathbb{E}\int_{t,x,y}\partial_{y_{i}x_{j}}\phi_{\varsigma}\int_{\tilde{u}}^{u}\int_{\tilde{u}}^{u}\big(a^{ij}(x,r)+a^{ij}(y,\tilde{r})\big)\eta_{\delta}^{\prime\prime}(r-\tilde{r})\mathrm{d}\tilde{r}\mathrm{d}r\\
 & \quad+\mathbb{E}\int_{t,x,y}\partial_{x_{i}}\phi_{\varsigma}\int_{\tilde{u}}^{u}a_{x_{j}}^{ij}(x,r)\eta_{\delta}^{\prime}(r-\tilde{u})\mathrm{d}r-\mathbb{E}\int_{t,x,y}\partial_{y_{i}}\phi_{\varsigma}\int_{u}^{\tilde{u}}a_{y_{j}}^{ij}(y,\tilde{r})\eta_{\delta}^{\prime}(u-\tilde{r})\mathrm{d}\tilde{r}\\
 & \quad-2\mathbb{E}\int_{t,x,y}\eta_{\delta}^{\prime}(u-\tilde{u})b^{i}(x,u)\partial_{x_{i}}\phi_{\varsigma}+2\mathbb{E}\int_{t,x,y}\eta_{\delta}^{\prime}(u-\tilde{u})b^{i}(y,\tilde{u})\partial_{y_{i}}\phi_{\varsigma}.
\end{align*}
If $|r-\tilde{r}|\leq\delta$, from the definition of $a^{ij}$ and
Assumption \ref{assu:assumption for sigma}, we have
\begin{align*}
 & \partial_{y_{i}x_{j}}\phi_{\varsigma}(a^{ij}(x,r)+a^{ij}(y,\tilde{r})-{\sum_{k=1}^{\infty}}\sigma_{r}^{ik}(x,r)\sigma_{r}^{jk}(y,\tilde{r}))\\
 & =\frac{1}{2}\partial_{y_{i}x_{j}}\phi_{\varsigma}\sum_{k=1}^{\infty}\big(\sigma_{r}^{ik}(x,r)-\sigma_{r}^{ik}(y,\tilde{r})\big)\big(\sigma_{r}^{jk}(x,r)-\sigma_{r}^{jk}(y,\tilde{r})\big)\\
 & \leq C\varphi(t)(\varsigma^{2}+\delta^{2})\sum_{i,j=1}^{d}|\partial_{y_{i}x_{j}}\varrho_{\varsigma}(x-y)|.
\end{align*}
Therefore,
\begin{align*}
 & N_{3,1}+N_{7,1}\\
 & \leq C\mathbb{E}\int_{t,x,y}\varphi(t)\varsigma^{2}\sum_{i,j=1}^{d}|\partial_{y_{i}x_{j}}\varrho_{\varsigma}(x-y)|\cdot|u-\tilde{u}|\\
 & \quad+C\delta^{2}\varsigma^{-2}\mathbb{E}[\Vert u\Vert_{L_{1}(Q_{T})}+\Vert\tilde{u}\Vert_{L_{1}(Q_{T})}].
\end{align*}
To estimate $N_{3,2}+N_{7,2}+N_{7,4}$, we have
\begin{align*}
 & N_{3,2}+N_{7,2}+N_{7,4}\\
 & =-\mathbb{E}\int_{t,x,y}\mathbf{1}_{\{\tilde{u}\leq u\}}\partial_{y_{i}}\phi_{\varsigma}\int_{\tilde{u}}^{u}\int_{\tilde{u}}^{u}\mathbf{1}_{\{\tilde{r}\leq r\}}\Big(a_{x_{j}}^{ij}(x,r)-a_{y_{j}}^{ij}(y,\tilde{r})\Big)\eta_{\delta}^{\prime\prime}(r-\tilde{r})\mathrm{d}\tilde{r}\mathrm{d}r\\
 & \quad-\mathbb{E}\int_{t,x,y}\mathbf{1}_{\{u\leq\tilde{u}\}}\partial_{y_{i}}\phi_{\varsigma}\int_{u}^{\tilde{u}}\int_{u}^{\tilde{u}}\mathbf{1}_{\{r\leq\tilde{r}\}}\Big(a_{x_{j}}^{ij}(x,r)-a_{y_{j}}^{ij}(y,\tilde{r})\Big)\eta_{\delta}^{\prime\prime}(r-\tilde{r})\mathrm{d}\tilde{r}\mathrm{d}r\\
 & \quad+\mathbb{E}\int_{t,x,y}\mathbf{1}_{\{\tilde{u}\leq u\}}\partial_{y_{i}}\phi_{\varsigma}\\
 & \quad\quad\cdot\int_{\tilde{u}}^{u}\int_{\tilde{u}}^{u}\mathbf{1}_{\{\tilde{r}\leq r\}}\eta_{\delta}^{\prime\prime}(r-\tilde{r})\big(\sigma_{rx_{j}}^{jk}(x,r)\sigma_{r}^{ik}(y,\tilde{r})-\sigma_{r}^{ik}(x,r)\sigma_{ry_{j}}^{jk}(y,\tilde{r})\big)\mathrm{d}\tilde{r}\mathrm{d}r\\
 & \quad+\mathbb{E}\int_{t,x,y}\mathbf{1}_{\{u\leq\tilde{u}\}}\partial_{y_{i}}\phi_{\varsigma}\\
 & \quad\quad\cdot\int_{u}^{\tilde{u}}\int_{u}^{\tilde{u}}\mathbf{1}_{\{r\leq\tilde{r}\}}\eta_{\delta}^{\prime\prime}(r-\tilde{r})\big(\sigma_{rx_{j}}^{jk}(x,r)\sigma_{r}^{ik}(y,\tilde{r})-\sigma_{r}^{ik}(x,r)\sigma_{ry_{j}}^{jk}(y,\tilde{r})\big)\mathrm{d}\tilde{r}\mathrm{d}r.
\end{align*}
From Assumption \ref{assu:assumption for sigma}, we have
\[
|a_{x_{j}}^{ij}(x,r)-a_{x_{j}}^{ij}(y,\tilde{r})|\leq C\big(|r-\tilde{r}|+|x-y|\big),
\]
and
\[
|\sigma_{r}^{ik}(x,r)\sigma_{ry_{j}}^{jk}(y,\tilde{r})-\sigma_{r}^{ik}(y,\tilde{r})\sigma_{rx_{j}}^{jk}(x,r)|\leq C\big(|r-\tilde{r}|+|x-y|\big).
\]
Then, we have
\begin{align*}
N_{3,2}+N_{7,2}+N_{7,4} & \leq C\delta\varsigma^{-1}\mathbb{E}(\Vert u\Vert_{L_{1}(Q_{T})}+\Vert\tilde{u}\Vert_{L_{1}(Q_{T})})\\
 & \quad+C\mathbb{E}\int_{t,x,y}\varphi(t)\Big(\varsigma\sum_{i=1}^{d}|\partial_{y_{i}}\varrho_{\varsigma}(x-y)|\Big)|u-\tilde{u}|.
\end{align*}
Similarly, to estimate $N_{3,3}+N_{7,3}+N_{7,7}$, we first define
\begin{align*}
N_{3,3}=\sum_{i=1}^{2}N_{3,3,i} & :=-2\mathbb{E}\int_{t,x,y}\eta_{\delta}^{\prime}(u-\tilde{u})b^{i}(x,u)\partial_{x_{i}}\phi_{\varsigma}\\
 & \quad+2\mathbb{E}\int_{t,x,y}\eta_{\delta}^{\prime}(u-\tilde{u})b^{i}(y,\tilde{u})\partial_{y_{i}}\phi_{\varsigma}.
\end{align*}
Using
\begin{align*}
 & N_{3,3,2}+N_{7,7}\\
 & =\mathbb{E}\int_{t,x,y}\partial_{y_{i}}\phi_{\varsigma}\int_{\tilde{u}}^{u}\eta_{\delta}^{\prime\prime}(r-\tilde{u})\sigma_{y_{j}}^{jk}(y,\tilde{u})\Big(\sigma_{r}^{ik}(y,\tilde{u})-\sigma_{r}^{ik}(x,\tilde{u})\Big)\mathrm{d}r\\
 & \quad+\mathbb{E}\int_{t,x,y}\partial_{y_{i}}\phi_{\varsigma}\int_{\tilde{u}}^{u}\eta_{\delta}^{\prime\prime}(r-\tilde{u})\sigma_{y_{j}}^{jk}(y,\tilde{u})\Big(\sigma_{r}^{ik}(x,\tilde{u})-\sigma_{r}^{ik}(x,r)\Big)\mathrm{d}r\\
 & \leq\mathbb{E}\int_{t,x,y}\partial_{y_{i}}\phi_{\varsigma}\eta_{\delta}^{\prime}(u-\tilde{u})\sigma_{y_{j}}^{jk}(y,\tilde{u})(y_{l}-x_{l})\int_{0}^{1}\sigma_{rx_{l}}^{ik}(x+\theta(y-x),\tilde{u})\mathrm{d}\theta\\
 & \quad+C\delta\varsigma^{-1}\mathbb{E}(1+\Vert u\Vert_{L_{1}(Q_{T})}+\Vert\tilde{u}\Vert_{L_{1}(Q_{T})}),
\end{align*}
and 
\begin{align*}
 & N_{3,3,1}+N_{7,3}\\
 & =\mathbb{E}\int_{t,x,y}\partial_{y_{i}}\phi_{\varsigma}\int_{\tilde{u}}^{u}\eta_{\delta}^{\prime\prime}(u-\tilde{r})\sigma_{x_{j}}^{jk}(x,u)\Big(\sigma_{r}^{ik}(x,u)-\sigma_{r}^{ik}(y,u)\Big)\mathrm{d}\tilde{r}\\
 & \quad+\mathbb{E}\int_{t,x,y}\partial_{y_{i}}\phi_{\varsigma}\int_{\tilde{u}}^{u}\eta_{\delta}^{\prime\prime}(u-\tilde{r})\sigma_{x_{j}}^{jk}(x,u)\Big(\sigma_{r}^{jk}(y,u)-\sigma_{r}^{ik}(y,\tilde{r})\Big)\mathrm{d}\tilde{r}\\
 & \leq-\mathbb{E}\int_{t,x,y}\partial_{y_{i}}\phi_{\varsigma}\eta_{\delta}^{\prime}(u-\tilde{u})\sigma_{x_{j}}^{jk}(x,u)(y_{l}-x_{l})\int_{0}^{1}\sigma_{rx_{l}}^{ik}(x+\theta(y-x),u)\mathrm{d}\theta\\
 & \quad+C\delta\varsigma^{-1}\mathbb{E}(1+\Vert u\Vert_{L_{1}(Q_{T})}+\Vert\tilde{u}\Vert_{L_{1}(Q_{T})}),
\end{align*}
we have
\begin{align*}
 & N_{3,3}+N_{7,3}+N_{7,7}\\
 & \leq\mathbb{E}\int_{t,x,y}\partial_{y_{i}}\phi_{\varsigma}\eta_{\delta}^{\prime}(u-\tilde{u})\sigma_{y_{j}}^{jk}(y,\tilde{u})(y_{l}-x_{l})\int_{0}^{1}\sigma_{rx_{l}}^{ik}(x+\theta(y-x),\tilde{u})\mathrm{d}\theta\\
 & \quad-\mathbb{E}\int_{t,x,y}\partial_{y_{i}}\phi_{\varsigma}\eta_{\delta}^{\prime}(u-\tilde{u})\sigma_{x_{j}}^{jk}(x,u)(y_{l}-x_{l})\int_{0}^{1}\sigma_{rx_{l}}^{ik}(x+\theta(y-x),u)\mathrm{d}\theta\\
 & \quad+C\delta\varsigma^{-1}\mathbb{E}(1+\Vert u\Vert_{L_{1}(Q_{T})}+\Vert\tilde{u}\Vert_{L_{1}(Q_{T})}).
\end{align*}
Since
\begin{align*}
 & \Big|\sigma_{y_{j}}^{jk}(y,\tilde{u})\int_{0}^{1}\sigma_{rx_{l}}^{ik}(x+\theta(y-x),\tilde{u})\mathrm{d}\theta-\sigma_{x_{j}}^{jk}(x,u)\int_{0}^{1}\sigma_{rx_{l}}^{ik}(x+\theta(y-x),u)\mathrm{d}\theta\Big|\\
 & \leq C\varsigma(1+|u|+|\tilde{u}|)+\Big|\sigma_{y_{j}}^{jk}(y,\tilde{u})\sigma_{rx_{l}}^{ik}(x,\tilde{u})-\sigma_{x_{j}}^{jk}(x,u)\sigma_{rx_{l}}^{ik}(x,u)\Big|\\
 & \leq C\varsigma(1+|u|+|\tilde{u}|)+\Big|\sigma_{y_{j}}^{jk}(y,\tilde{u})\sigma_{rx_{l}}^{ik}(x,\tilde{u})-\sigma_{x_{j}}^{jk}(x,\tilde{u})\sigma_{rx_{l}}^{ik}(x,\tilde{u})\Big|+C|u-\tilde{u}|\\
 & \leq C\varsigma(1+|u|+|\tilde{u}|)+C|u-\tilde{u}|,
\end{align*}
we have
\begin{align*}
N_{3,3}+N_{7,3}+N_{7,7} & \leq C(\delta\varsigma^{-1}+\varsigma)\mathbb{E}(1+\Vert u\Vert_{L_{1}(Q_{T})}+\Vert\tilde{u}\Vert_{L_{1}(Q_{T})})\\
 & \quad+C\mathbb{E}\int_{t,x,y}\varphi(t)\Big(\varsigma\sum_{i=1}^{d}|\partial_{y_{i}}\varrho_{\varsigma}(x-y)|\Big)\cdot|u-\tilde{u}|.
\end{align*}
For $N_{7,5}$, using Assumption \ref{assu:assumption for sigma},
we have
\begin{align*}
N_{7,5} & \leq C\mathbb{E}\int_{t,x,y}\varphi(t)\varrho_{\varsigma}(x-y)\cdot|u-\tilde{u}|.
\end{align*}
Similarly, we have
\begin{align*}
N_{7,6}+N_{7,8} & \leq\mathbb{E}\int_{t,x,y}\phi_{\varsigma}\int_{u}^{\tilde{u}}\eta_{\delta}^{\prime\prime}(u-\tilde{r})\sigma_{x_{j}}^{jk}(x,u)\sigma_{ry_{i}}^{ik}(y,u)\mathrm{d}\tilde{r}\\
 & \quad+\mathbb{E}\int_{t,x,y}\phi_{\varsigma}\int_{\tilde{u}}^{u}\eta_{\delta}^{\prime\prime}(r-\tilde{u})\sigma_{rx_{j}}^{jk}(x,\tilde{u})\sigma_{y_{i}}^{ik}(y,\tilde{u})\mathrm{d}r\\
 & \quad+C\delta\mathbb{E}(1+\Vert u\Vert_{L_{1}(Q_{T})}+\Vert\tilde{u}\Vert_{L_{1}(Q_{T})})\\
 & \leq\mathbb{E}\int_{t,x,y}\phi_{\varsigma}\eta_{\delta}^{\prime}(u-\tilde{u})|\sigma_{rx_{j}}^{jk}(x,\tilde{u})\sigma_{y_{i}}^{ik}(y,\tilde{u})-\sigma_{x_{j}}^{jk}(x,u)\sigma_{ry_{i}}^{ik}(y,u)|\\
 & \quad+C\delta\mathbb{E}(1+\Vert u\Vert_{L_{1}(Q_{T})}+\Vert\tilde{u}\Vert_{L_{1}(Q_{T})})\\
 & \leq C(\delta+\varsigma)\mathbb{E}(1+\Vert u\Vert_{L_{1}(Q_{T})}+\Vert\tilde{u}\Vert_{L_{1}(Q_{T})})+\mathbb{E}\int_{t,x,y}\varphi(t)\varrho_{\varsigma}(x-y)|u-\tilde{u}|.
\end{align*}
Using Assumption \ref{assu:assumption for sigma}, we also have
\begin{align*}
N_{4} & =-\mathbb{E}\int_{t,x,y}\mathbf{1}_{\{\tilde{u}\leq u\}}\partial_{x_{i}}\phi_{\varsigma}\int_{\tilde{u}}^{u}\int_{\tilde{u}}^{u}\mathbf{1}_{\{\tilde{r}\leq r\}}\eta_{\delta}^{\prime\prime}(r-\tilde{r})\Big(b_{r}^{i}(y,\tilde{r})-b_{r}^{i}(x,r)\Big)\mathrm{d}r\mathrm{d}\tilde{r}\\
 & \quad-\mathbb{E}\int_{t,x,y}\mathbf{1}_{\{u\leq\tilde{u}\}}\partial_{x_{i}}\phi_{\varsigma}\int_{u}^{\tilde{u}}\int_{u}^{\tilde{u}}\mathbf{1}_{\{r\leq\tilde{r}\}}\eta_{\delta}^{\prime\prime}(r-\tilde{r})\Big(b_{r}^{i}(y,\tilde{r})-b_{r}^{i}(x,r)\Big)\mathrm{d}r\mathrm{d}\tilde{r}\\
 & \quad-\mathbb{E}\int_{t,x,y}\phi_{\varsigma}\eta_{\delta}^{\prime}(u-\tilde{u})\big(b_{x_{i}}^{i}(x,u)-b_{y_{i}}^{i}(y,\tilde{u})\big)\phi_{\varsigma}\\
 & \quad+\mathbb{E}\int_{t,x,y}\phi_{\varsigma}\int_{\tilde{u}}^{u}b_{rx_{i}}^{i}(x,r)\eta_{\delta}^{\prime}(r-\tilde{u})\mathrm{d}r-\mathbb{E}\int_{t,x,y}\phi_{\varsigma}\int_{u}^{\tilde{u}}b_{ry_{i}}^{i}(y,\tilde{r})\eta_{\delta}^{\prime}(u-\tilde{r})\mathrm{d}\tilde{r}\\
 & \leq C(\varsigma+\delta\varsigma^{-1})\mathbb{E}(1+\Vert u\Vert_{L_{1}(Q_{T})}+\Vert\tilde{u}\Vert_{L_{1}(Q_{T})})\\
 & \quad+C\mathbb{E}\int_{t,x,y}\varphi(t)\Big(\varsigma\sum_{i=1}^{d}|\partial_{x_{i}}\varrho_{\varsigma}(x-y)|+\varrho_{\varsigma}(x-y)\Big)|u-\tilde{u}|.
\end{align*}
Combining these estimates of $N_i$, $i=1,\ldots,7$ and
\[
\Big|\mathbb{E}\int_{t,x,y}\eta_{\delta}(u-\tilde{u})\partial_{t}\phi_{\varsigma}-\mathbb{E}\int_{t,x,y}|u-\tilde{u}|\partial_{t}\phi_{\varsigma}\Big|\leq C\delta,
\]
we obtain 
\begin{equation}
\begin{aligned} & -\mathbb{E}\int_{t,x,y}|u-\tilde{u}|\partial_{t}\phi_{\varsigma}\\
 & \leq C\mathbb{E}\int_{t,x,y}|u-\tilde{u}|\varphi(t)\Big(\varsigma^{2}\sum_{i,j=1}^{d}|\partial_{y_{i}x_{j}}\varrho_{\varsigma}(x-y)|+\varsigma\sum_{i=1}^{d}|\partial_{x_{i}}\varrho_{\varsigma}(x-y)|+\varrho_{\varsigma}(x-y)\Big)\\
 & \quad+C(\delta^{-1}\varsigma^{2}+\varsigma+\varsigma^{-2}\delta^{2\alpha}+\delta\varsigma^{-1}+\delta^{2}\varsigma^{-2})\mathbb{E}\Big[1+\Vert u\Vert_{L_{m+1}(Q_{T})}^{m+1}+\Vert\tilde{u}\Vert_{L_{m+1}(Q_{T})}^{m+1}\Big].
\end{aligned}
\label{eq:diffu tildeu-1-1}
\end{equation}
Let $\vartheta\in(m^{-1}\lor2^{-1},1)$, $\alpha\in(1/(2\vartheta),1\land(m/2))$, 
and $\delta=\varsigma^{2\vartheta}$. Note that $\varsigma^{2}\sum_{i,j}|\partial_{y_{i}x_{j}}\varrho_{\varsigma}|$
and $\varsigma|\partial_{x_{i}}\varrho_{\varsigma}|$ are approximations
of the identity up to a constant. Taking $\varsigma\downarrow0$,
with the continuity of translations in $L_{1}$, we have
\begin{equation}
-\mathbb{E}\int_{t,x}|u(t,x)-\tilde{u}(t,x)|\partial_{t}\varphi(t)\leq C\mathbb{E}\int_{t,x}|u(t,x)-\tilde{u}(t,x)|\varphi(t),\label{eq:L1_with phi}
\end{equation}
where the constant $C$ only depends on $\Vert\varphi\Vert_{L_{\infty}(0,T)}$,
$\Vert\partial_{t}\varphi\Vert_{L_{1}(0,T)}$, $K$, $d$, and $T$,
and increases with their growth.

Let $0<s<\tau<T$ be Lebesgue points of the function 
\[
t\rightarrow\mathbb{E}\int_{x}|u(t,x)-\tilde{u}(t,x)|.
\]
Fix a constant $\gamma\in(0,(\tau-s)\lor(T-\tau))$. We take a sequence
of functions $\{\varphi_{n}\}_{n\in\mathbb{N}}$ satisfying $\varphi_{n}\in C_{c}^{\infty}((0,T))$ and
\[
\max\{\Vert\varphi\Vert_{L_{\infty}(0,T)},\Vert\partial_{t}\varphi\Vert_{L_{1}(0,T)}\}\leq1,
\]
 such that
\[
\lim_{n\rightarrow\infty}\Vert\varphi_{n}-V_{(\gamma)}\Vert_{H_{0}^{1}(0,T)}=0,
\]
where $V_{(\gamma)}:[0,T]\rightarrow\mathbb{R}$ satisfies $V_{(\gamma)}(0)=0$
and $V_{(\gamma)}^{\prime}=\gamma^{-1}\mathbf{1}_{[s,s+\gamma]}-\gamma^{-1}\mathbf{1}_{[\tau,\tau+\gamma]}$.
Taking $\varphi=\varphi_{n}$ in (\ref{eq:L1_with phi}) and passing
to the limit $n\rightarrow\infty$, we have 
\begin{align*}
 & \frac{1}{\gamma}\mathbb{E}\int_{\tau}^{\tau+\gamma}\int_{x}|u(t,x)-\tilde{u}(t,x)|\mathrm{d}t\\
 & \qquad\leq\frac{1}{\gamma}\mathbb{E}\int_{s}^{s+\gamma}\int_{x}|u(t,x)-\tilde{u}(t,x)|\mathrm{d}t+C\mathbb{E}\int_{0}^{\tau+\gamma}\int_{x}|u(t,x)-\tilde{u}(t,x)|\mathrm{d}t.
\end{align*}
Let $\gamma\downarrow0$ and we have
\[
\mathbb{E}\int_{x}|u(\tau,x)-\tilde{u}(\tau,x)|\leq\mathbb{E}\int_{x}|u(s,x)-\tilde{u}(s,x)|+C\mathbb{E}\int_{0}^{\tau}\int_{x}|u(t,x)-\tilde{u}(t,x)|\mathrm{d}t
\]
holds for almost all $s\in(0,\tau)$. Then, for each $\tilde{\gamma}\in(0,\tau)$,
by averaging over $s\in(0,\tilde{\gamma})$, we have
\begin{align*}
 & \mathbb{E}\int_{x}|u(\tau,x)-\tilde{u}(\tau,x)|\\
 & \qquad\leq\frac{1}{\tilde{\gamma}}\mathbb{E}\int_{0}^{\tilde{\gamma}}\int_{x}|u(t,x)-\tilde{u}(t,x)|\mathrm{d}t+C\mathbb{E}\int_{0}^{\tau}\int_{x}|u(t,x)-\tilde{u}(t,x)|\mathrm{d}t.
\end{align*}
Taking the limit $\tilde{\gamma}\downarrow0$ and using Lemma
\ref{lem:appro initial} and Gr\"onwall's inequality, we obtain (\ref{eq:L1_for diff solution u}).
\end{proof}

Now, we prove
the uniqueness of the compensation measure $\nu$.
\begin{prop}
\label{prop:Uniquenessfornu}Let Assumptions \ref{assu:assumption for phi}
and \ref{assu:assumption for sigma} be satisfied, and $\psi\in C^{2}_x(\bar{Q}_{T})$.
Given $\psi(0,\cdot)\le\xi,\tilde{\xi}\in L_{m+1}(\Omega,\mathcal{F}_{0};L_{m+1}(\mathbb{T}^{d}))$,
we suppose that $(u,\nu)$ and $(\tilde{u},\tilde{\nu})$ are entropy
solutions to $\Pi_{\psi}(\Phi,f,\xi)$ and $\Pi_{\psi}(\Phi,f,\tilde{\xi})$,
respectively. If $u=\tilde{u}$ for almost all $(\omega,t,x)\in\Omega\times Q_{T}$,
we have $\nu=\tilde{\nu}$ as an Radon measure on $Q_{T}$, almost
surely.
\end{prop}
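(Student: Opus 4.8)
\emph{Plan of proof.} The plan is to derive from the entropy inequality \eqref{eq:entropy formula-1} an \emph{exact} weak identity that expresses $\nu$ in terms of $u$ and $\xi$ alone; applying it to both $(u,\nu)$ and $(\tilde u,\tilde\nu)$ and using $u=\tilde u$ will then give $\nu=\tilde\nu$. For $\phi=\varphi\varrho$ with $\varphi\in C_c^\infty([0,T))$, $\varrho\in C^\infty(\mathbb T^d)$, write $\mathcal R(\phi;u,\xi)$ for the right-hand side of \eqref{eq:entropy formula-1} evaluated at the admissible linear entropy $\eta(r)=r\in\mathcal E$ (so that $\llbracket\Phi'\eta'\rrbracket(u)=\Phi(u)$, $\eta'(\psi)\equiv1$, and the two $\eta''$-terms are absent); note that $\mathcal R(\phi;u,\xi)$ depends on $u,\xi,\phi$ but not on $\nu$. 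Taking $\eta(r)=r$ in \eqref{eq:entropy formula-1} for $(u,\nu)$ gives, for every non-negative $\phi=\varphi\varrho$,
\[
\int_{Q_T}\phi\,\nu(\mathrm dx\mathrm dt)\ \ge\ -\int_{Q_T}u\,\partial_t\phi\,\mathrm dx\mathrm dt-\mathcal R(\phi;u,\xi)\qquad\text{a.s.}
\]

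The reverse inequality, which is the core of the argument, is obtained from Kruzhkov-type entropies localised at high levels. Fix $\phi=\varphi\varrho\ge0$; for $c>1$ pick $\zeta_c\in\mathcal E$ convex with $\zeta_c(r)=-r$ for $r\le c-1$, $|\zeta_c'|\le1$, and $\mathrm{supp}\,\zeta_c''\subset[c-1,c+1]$. As $\psi$ is bounded on $\bar Q_T$, for $c$ large $\zeta_c'(\psi)\equiv-1$, so the measure term in \eqref{eq:entropy formula-1} for $(u,\nu)$ with $\eta=\zeta_c$ equals $+\int_{Q_T}\phi\,\nu$; one then lets $c\to\infty$. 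Since $a^{ij},b^i,f$ and the entries of $\sigma$ and their relevant derivatives are bounded (Assumption~\ref{assu:assumption for sigma}) and $u,\xi\in L_{m+1}$, each ``linear-in-$\eta'$'' expression converges by dominated convergence in $\Omega\times Q_T$ to $(-1)$ times the corresponding term of $\mathcal R(\phi;u,\xi)$ (e.g.\ $\int_0^u a^{ij}(x,s)\zeta_c'(s)\,\mathrm ds\to-\llbracket a^{ij}\rrbracket(x,u)$, $\zeta_c(\xi)\to-\xi$, and $\zeta_c(u)\to-u$, so the time-derivative term $-\int_{Q_T}\zeta_c(u)\partial_t\phi\to\int_{Q_T}u\,\partial_t\phi$). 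For the degenerate-diffusion term, $\llbracket\Phi'\zeta_c'\rrbracket(u)=-\Phi(u)$ on $\{u\le c-1\}$ while on $\{u>c-1\}$ it is dominated by $|\Phi(u)|+C(1+\Phi(c))$, so using the growth bound $\Phi(r)\lesssim|r|+|r|^m$ from Assumption~\ref{assu:assumption for phi}, $\Phi(c)c^{-(m+1)}\to0$, and $u\in L_{m+1}(Q_T)$ a.s., one gets $\int_{Q_T}\llbracket\Phi'\zeta_c'\rrbracket(u)\Delta\phi\to-\int_{Q_T}\Phi(u)\Delta\phi$ a.s. The two $\zeta_c''$-terms vanish in the limit because $\zeta_c''(u)$ is supported on the shrinking set $\{u\in[c-1,c+1]\}$, $\zeta_c''$ is bounded, and $\sum_k|\sigma_{x_i}^{ik}(x,u)|^2$ and $|\nabla\llbracket\sqrt{\Phi'}\rrbracket(u)|^2$ are integrable over $\Omega\times Q_T$ by the defining regularity of entropy solutions. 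Finally, the stochastic integral in \eqref{eq:entropy formula-1} with $\eta=\zeta_c$ converges to minus the stochastic integral in $\mathcal R(\phi;u,\xi)$: its integrand converges pointwise and $\sum_k|\cdot|^2\le C(1+|u|)^2\in L_1(\Omega\times Q_T)$, so It\^o's isometry gives convergence in $L_2(\Omega)$, hence a.s.\ along a subsequence $c_n\uparrow\infty$. Passing to this limit in \eqref{eq:entropy formula-1} yields $\int_{Q_T}\phi\,\nu\le-\int_{Q_T}u\,\partial_t\phi-\mathcal R(\phi;u,\xi)$ a.s., and combined with the bound above,
\[
\int_{Q_T}\phi\,\nu(\mathrm dx\mathrm dt)\ =\ -\int_{Q_T}u\,\partial_t\phi\,\mathrm dx\mathrm dt-\mathcal R(\phi;u,\xi)\qquad\text{a.s.}
\]

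The same identity holds for $(\tilde u,\tilde\nu)$ with $(u,\xi)$ replaced by $(\tilde u,\tilde\xi)$. Since an entropy solution attains its initial datum in $L_1$ (Lemma~\ref{lem:appro initial}), $u=\tilde u$ a.e.\ forces $\xi=\tilde\xi$ a.e., so the right-hand sides agree and $\int_{Q_T}\phi\,\nu(\mathrm dx\mathrm dt)=\int_{Q_T}\phi\,\tilde\nu(\mathrm dx\mathrm dt)$ a.s.\ for every non-negative $\phi=\varphi\varrho$; writing $\varrho=(\varrho+\Vert\varrho\Vert_\infty)-\Vert\varrho\Vert_\infty$ removes the sign restriction on $\varrho$. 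Taking a countable family of such products whose linear span is dense in $C_0(Q_T)$ (a tensor-product / Stone--Weierstrass argument), we obtain one null set off which $\int_{Q_T}\phi\,\nu(\mathrm dx\mathrm dt)=\int_{Q_T}\phi\,\tilde\nu(\mathrm dx\mathrm dt)$ for all $\phi$ in that family; since $\nu(Q_T),\tilde\nu(Q_T)<\infty$ a.s.\ (by $\mathbb E[\nu(Q_T)]<\infty$), this extends to all $\phi\in C_0(Q_T)$, whence $\nu=\tilde\nu$ as Radon measures on $Q_T$, a.s. I expect the passage $c\to\infty$ to be the main obstacle: one must check that the pointwise-unbounded contributions (the $\Phi(c)$-terms and the $\zeta_c''$-concentration terms) are annihilated by the integrability $u\in L_{m+1}$ and $\llbracket\sqrt{\Phi'}\rrbracket(u)\in L_2(\Omega_T;H^1)$, and that the stochastic integral is controlled in $L_2(\Omega)$, which is what upgrades the resulting identity from one in expectation to an almost-sure one.
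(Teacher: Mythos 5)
Your proof is correct, but you have made the central step considerably harder than the paper does. You observe that the linear entropy $\eta(r)=r$ lies in $\mathcal{E}$ and gives one inequality; to get the reverse you introduce a family of convex entropies $\zeta_c$ with $\zeta_c(r)=-r$ for $r\le c-1$, $|\zeta_c'|\le 1$ and $\mathrm{supp}\,\zeta_c''\subset[c-1,c+1]$, and then run a delicate $c\to\infty$ limit, controlling the $\Phi(c)$--terms via the growth bound on $\Phi$, the $\zeta_c''$--concentration terms via the $L_2$--regularity of $\nabla\llbracket\sqrt{\Phi'}\rrbracket(u)$, and the stochastic integral via It\^o isometry. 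None of this is needed: $\eta(r)=-r$ is itself in $\mathcal{E}$ — it is affine, hence convex, with $\eta''\equiv 0$ trivially compactly supported — so you can take it directly. This is exactly what the paper does, taking $\eta(r)=r$ and $\eta(r)=-r$ in \eqref{eq:entropy formula-1} and adding the two inequalities, obtaining immediately the exact weak identity that pins $\nu$ down in terms of $u$ and $\xi$ with no limiting procedure. Your closing steps agree with the paper's: both extend the identity from test functions of the form $\varphi\varrho\ge 0$ to all continuous functions vanishing at $t=T$ by density and the a.s.\ finiteness of $\nu(Q_T)$, $\tilde\nu(Q_T)$. On one point you are slightly more careful than the paper: the proposition allows $\xi\ne\tilde\xi$, and you explicitly invoke Lemma~\ref{lem:appro initial} to deduce $\xi=\tilde\xi$ from $u=\tilde u$ before matching the initial-data terms in the identity, whereas the paper's proof implicitly works with the same $\xi$ on both sides. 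In short: same strategy, same density argument, same extra observation about the initial data, but a gratuitous $c\to\infty$ limit where a single linear test entropy suffices.
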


\begin{proof}
We apply the entropy inequality (\ref{eq:entropy formula-1}) by taking $\eta(r):=r$ and $\eta(r):=-r$, respectively. Combining
these two inequalities, we have
\[
\begin{aligned} & -\int_{Q_{T}}\phi\nu(\mathrm{d}x\mathrm{d}t)\\
 & =\int_{Q_{T}}u\partial_{t}\phi\mathrm{d}x\mathrm{d}t+\int_{\mathbb{T}^{d}}\xi\phi(0)\mathrm{d}x+\int_{Q_{T}}\Phi(u)\Delta\phi\mathrm{d}x\mathrm{d}t\\
 & \quad+\int_{Q_{T}}\llbracket a^{ij}\rrbracket(u)\partial_{x_{i}x_{j}}\phi\mathrm{d}x\mathrm{d}t\\
 & \quad+\int_{Q_{T}}\Big(\llbracket a_{x_{j}}^{ij}+b_{r}^{i}\rrbracket(x,u)-2b^{i}(x,u)\Big)\partial_{x_{i}}\phi\mathrm{d}x\mathrm{d}t\\
 & \quad+\int_{Q_{T}}\Big(-b_{x_{i}}^{i}(x,u)+\llbracket b_{rx_{i}}^{i}\rrbracket(x,u)+f(t,x,u)\Big)\phi\mathrm{d}x\mathrm{d}t\\
 & \quad+\int_{Q_{T}}\Big(\phi\sigma_{x_{i}}^{ik}(x,u)-\llbracket\sigma_{rx_{i}}^{ik}\rrbracket(x,u)\phi-\llbracket\sigma_{r}^{ik}\rrbracket(x,u)\partial_{x_{i}}\phi\Big)\mathrm{d}x\mathrm{d}W_{t}^{k}.
\end{aligned}
\]
With the uniqueness of $u$, if there exists another entropy solution
$(u,\tilde{\nu})$ to the obstacle problem $\Pi_{\psi}(\Phi,f,\xi)$,
we have
\begin{equation}
\int_{Q_{T}}\phi\nu(\mathrm{d}x\mathrm{d}t)=\int_{Q_{T}}\phi\tilde{\nu}(\mathrm{d}x\mathrm{d}t)\quad\text{a.s.}\ \omega\in\Omega\label{eq:equal for =00005Cnu},
\end{equation}
for all test function $\phi:=\varphi\varrho\geq0$, where $(\varphi,\varrho)\in C_{c}^{\infty}([0,T))\times C^{\infty}(\mathbb{T}^{d})$. 

Now, we weaken the requirement on $\phi$ to $C(\bar{Q}_T)$. Fix a function
$\phi\in C(\bar{Q}_T)$ satisfying $\phi(T,\cdot)\equiv0$.
For any $\bar{\varepsilon}>0$, there exist $L\in\mathbb{N}$, series
$\{z_{l}\}_{l=1}^{L}$ and smooth functions $\{\phi_{l}\}_{l=1}^{L}$
satisfying $\phi_{l}=\varphi_{l}\rho_{l}\geq0$ where $\varphi_{l}\in C_{c}^{\infty}([0,T))$
and $\rho_{l}\in C^{\infty}(\mathbb{T}^{d})$, such that
\[
\Big\Vert\phi-\sum_{l=1}^{L}z_{l}\phi_{l}\Big\Vert_{C(\bar{Q}_T)}\leq\bar{\varepsilon}.
\]
Then, we have 
\begin{equation}
\begin{aligned} & \mathbb{E}\bigg|\int_{Q_{T}}\phi\tilde{\nu}(\mathrm{d}x\mathrm{d}t)-\int_{Q_{T}}\phi\nu(\mathrm{d}x\mathrm{d}t)\bigg|\\
 & \leq\mathbb{E}\bigg|\int_{Q_{T}}\phi\tilde{\nu}(\mathrm{d}x\mathrm{d}t)-\int_{Q_{T}}\sum_{l=1}^{L}z_{l}\phi_{l}\tilde{\nu}(\mathrm{d}x\mathrm{d}t)\bigg|\\
 & \quad+\sum_{l=1}^{L}z_{l}\mathbb{E}\bigg|\int_{Q_{T}}\phi_{l}\tilde{\nu}(\mathrm{d}x\mathrm{d}t)-\int_{Q_{T}}\phi_{l}\nu(\mathrm{d}x\mathrm{d}t)\bigg|\\
 & \quad+\mathbb{E}\bigg|\int_{Q_{T}}\phi\nu(\mathrm{d}x\mathrm{d}t)-\int_{Q_{T}}\sum_{l=1}^{L}z_{l}\phi_{l}\nu(\mathrm{d}x\mathrm{d}t)\bigg|\\
 & \leq\Big(\mathbb{E}\tilde{\nu}(Q_T)+\mathbb{E}\nu(Q_{T})\Big)\cdot\Big\Vert\phi-\sum_{l=1}^{L}z_{l}\phi_{l}\Big\Vert_{C(\bar{Q}_T)}\leq C\bar{\varepsilon}.
\end{aligned}
\label{eq:unique for nu}
\end{equation}
Therefore, we have (\ref{eq:equal for =00005Cnu}) holds for all $\phi\in C(\bar{Q}_T)$
satisfying $\phi(T,\cdot)\equiv0$, which
means $\nu=\tilde{\nu}$ as an Radon measure on $Q_{T}$, almost surely.
\end{proof}

\section{Approximation and the penalization scheme\label{sec:Approximation}}

Sections \ref{sec:Approximation}--\ref{sec:Existence} are devoted to the proof of
the existence of the entropy solution. 
The basic idea
is approximating $\Phi$ with the smooth functions $\Phi_{n}$ as
in \cite{dareiotis2019entropy} and using the penalization method. 
\begin{lem}[Proposition 5.1 in \cite{dareiotis2019entropy}]
\label{lem:Assumptio for coef}
Let $\Phi$ satisfy Assumption~\ref{assu:assumption for phi} with
a constant $K>1$. Then, for all $n\in\mathbb{N}$, there exists an
increasing function $\Phi_{n}\in C^{\infty}(\mathbb{R})$ with bounded
derivatives, satisfying Assumption~\ref{assu:assumption for phi}
with constant $3K$, such that $\sqrt{\Phi_{n}^{\prime}}(r)\geq2/n$,
and
\[
\sup_{|r|\leq n}\,|\sqrt{\Phi^{\prime}}(r)-\sqrt{\Phi_{n}^{\prime}}(r)|\leq\frac{4}{n}.
\]
\end{lem}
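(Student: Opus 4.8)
The plan is to transfer everything to the one--dimensional function $g:=\sqrt{\Phi'}$, which by Assumption~\ref{assu:assumption for phi} is a nonnegative even continuous function with the stated bounds, to build a regularised $g_n$ out of it, and then simply \emph{define}
\[
\Phi_n(r):=\int_0^r g_n(s)^2\,\mathrm{d}s .
\]
Then $\sqrt{\Phi_n'}=g_n$; $\Phi_n$ is odd whenever $g_n$ is even; and $\Phi_n$ is strictly increasing as soon as $g_n>0$, with bounded derivatives as soon as $g_n$ is smooth, bounded, and constant outside a compact set. I would obtain $g_n$ in three steps. \emph{Upper truncation:} let $g^{(1)}_n$ agree with $g$ on $[-n,n]$ and equal the constant $g(n)$ on $\{|r|>n\}$; this leaves $g$ untouched on $[-n,n]$, makes the function bounded and constant at infinity, and does not enlarge $|(\cdot)'|$. \emph{Lower truncation:} set $g^{(2)}_n:=\max\{g^{(1)}_n,\,3/n\}$, which pushes the function above $3/n$, perturbs it by at most $3/n$, and again does not enlarge $|(\cdot)'|$. \emph{Mollification:} let $g_n:=g^{(2)}_n*\rho_{\epsilon_n}$ for an even mollifier $\rho_{\epsilon_n}$ of width $\epsilon_n>0$ to be fixed small, depending on $n$, $m$, $K$, and the modulus of continuity of $g$ on $[-n-1,n+1]$.

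Once $\epsilon_n$ is small enough, the soft conclusions are immediate: $g_n\in C^\infty$ is even (so $\Phi_n$ is odd), bounded, and constant on $\{|r|\ge n+\epsilon_n\}$ (so all derivatives of $\Phi_n$ are bounded); $g_n\ge 3/n-\omega_n\ge 2/n$, where $\omega_n\le 1/n$ is the mollification error of $g^{(2)}_n$, so $\sqrt{\Phi_n'}=g_n\ge 2/n$ and $\Phi_n$ is strictly increasing; and on $[-n,n]$, $|g_n-g|\le|g_n-g^{(2)}_n|+|g^{(2)}_n-g|\le 1/n+3/n=4/n$. Two of the entries of Assumption~\ref{assu:assumption for phi} with constant $3K$ then follow easily: $g_n(0)\le\max\{g(0),3/n\}\le\max\{K,3\}\le 3K$ since $K>1$; and for $|r|\ge1$ the function $g^{(2)}_n$ is $\ge K^{-1}$ (using $g\ge K^{-1}$ on $\{|r|\ge1\}$ and $g(n)\ge K^{-1}$ as $n\ge1$), so after mollification $g_n\ge(3K)^{-1}$ there.

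The substantive point is the polynomial bound $|g_n'(r)|\le 3K|r|^{(m-3)/2}$. I would exploit that $g_n'$ vanishes off a middle region: the lower truncation flattens $g^{(2)}_n$ on an interval $(-s_n,s_n)$ with $s_n>0$ determined by $g(s_n)=3/n$ (in the case $g(0)<3/n$), so $g_n$ is constant near $0$, and $g_n$ is constant on $\{|r|\ge n+\epsilon_n\}$. On the remaining set $g_n'=(g^{(2)}_n)'*\rho_{\epsilon_n}$, whence $|g_n'(r)|\le K\sup_{|z-r|\le\epsilon_n}|z|^{(m-3)/2}$; choosing $\epsilon_n$ small compared with $s_n$ keeps $|z|/|r|$ within a factor $1+\delta_m$ on that set, where $\delta_m$ is fixed so that $(1+\delta_m)^{|m-3|/2}\le3$, and the bound follows. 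The genuinely delicate case is $g(0)>0$, where no lower truncation occurs for large $n$: then $\Phi'(0)=g(0)^2>0$, hence $g\in C^1$ near the origin with $g'$ odd, and one controls $g_n'$ near $0$ by pairing $y\leftrightarrow-y$ in the convolution, using $g_n'(0)=0$ and estimating the resulting symmetric difference. I expect this origin analysis to be the main obstacle.

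Finally I would check the bilateral bound, i.e. $3mK\big|\llbracket g_n\rrbracket(r)-\llbracket g_n\rrbracket(s)\big|$ dominates $|r-s|$ when $|r|\vee|s|\ge1$ and $|r-s|^{(m+1)/2}$ when $|r|\vee|s|<1$. Writing the left side as $3mK\big|\int_s^r g_n\big|$, I would dichotomise on the size of $|r-s|$ relative to a power of $n$: when it is small the lower truncation alone suffices, since $\int_s^r g_n\ge(2/n)|r-s|$; when it is not small, I would use $|g_n-g|\le 4/n$ on $[-n,n]$, together with $g_n\ge(3K)^{-1}$ on $\{|r|\ge1\}$ to absorb any piece outside $[-n,n]$, and the hypothesis $mK|\llbracket g\rrbracket(r)-\llbracket g\rrbracket(s)|\ge|r-s|\wedge|r-s|^{(m+1)/2}$ — the extra factor $3$ is precisely what absorbs the $4/n$ error and the passage from $K^{-1}$ to $(3K)^{-1}$. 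Collecting the five properties together with the definition of $\Phi_n$ gives the lemma; what remains is the explicit choice of $\epsilon_n$ making all the smallness requirements hold simultaneously.
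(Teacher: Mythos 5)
This lemma is quoted verbatim from Proposition 5.1 of \cite{dareiotis2019entropy}; the present paper does not include a proof, so there is no internal argument to compare against. Your overall strategy (regularise $g:=\sqrt{\Phi'}$ directly, then set $\Phi_n(r)=\int_0^r g_n^2$) is the natural one and matches the spirit of the cited construction, and the soft conclusions, the $4/n$ error estimate, and the dichotomy for the bilateral lower bound are correctly organised.

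The gap is exactly where you flag it, and it is genuine. When $m>3$ and $g(0)>0$, your construction leaves $g^{(2)}_n=g$ in a neighbourhood of $0$ for all large $n$, and the convolution estimate only yields
$|g_n'(r)|\le K(|r|+\epsilon_n)^{(m-3)/2}$,
which is a fixed positive constant on $\{|r|\lesssim\epsilon_n\}$, whereas the target $3K|r|^{(m-3)/2}$ vanishes there. Knowing $g_n'(0)=0$ is not enough: writing $g_n'(r)=r\,g_n''(\theta)$ and bounding $g_n''$ by $\|(g^{(2)}_n)'\|_\infty\|\rho_{\epsilon_n}'\|_{L_1}\sim \epsilon_n^{(m-5)/2}$ gives $|g_n'(r)|\lesssim |r|\,\epsilon_n^{(m-5)/2}$, and $|r|\lesssim |r|^{(m-3)/2}$ fails for $|r|\to0$ once $m>5$ (and even for $3<m<5$ the constants do not fit into the $3K$ budget). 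The pairing $y\leftrightarrow -y$ you propose would need a quantitative modulus on $g'$ (a $C^2$-type bound on $g$), which $\Phi\in C^2$ and the pointwise bound $|g'(r)|\le K|r|^{(m-3)/2}$ do not supply. The cure is to flatten $g$ on a small symmetric interval $(-s_n,s_n)$ \emph{unconditionally}, not only via the $\max\{\cdot,3/n\}$ truncation: e.g.\ replace $g$ by $g(\max(|r|,s_n))$ before the lower truncation, with $s_n\sim n^{-2/(m-1)}$. Then $g_n'\equiv0$ near the origin (so the derivative bound is vacuous there), and the $4/n$ error bound still holds because $|g(r)-g(0)|\le\frac{2K}{m-1}|r|^{(m-1)/2}$, so $\omega_g(s_n)\lesssim 1/n$. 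As written, without this extra flattening, your construction does not establish $|(\sqrt{\Phi_n'})'(r)|\le 3K|r|^{(m-3)/2}$ when $\Phi'(0)>0$ and $m>3$.
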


For $\epsilon>0$, define $P_{\epsilon}(r,b):=(r-b)^{-}/\epsilon$ and $\xi_{n}:=(-n)\lor(\xi\land n)$. 
We introduce
the approximating penalized equations
$\Pi(\Phi_{n},f+P_{\epsilon}(\cdot,\psi),\xi_{n})$:
 $u_{n,\epsilon}(0,x)=\xi_n(x)$ and
\[
\mathrm{d}u_{n,\epsilon}=[\Delta\Phi_n(u_{n,\epsilon})+f(t,x,u_{n,\epsilon})+\epsilon^{-1}(u_{n,\epsilon}-\psi)^{-}]\mathrm{d}t+\nabla\cdot\sigma^{k}(x,u_{n,\epsilon})\circ\mathrm{d}{W}_{t}^{k},
\]
Let $u_{n,\epsilon}$ be the $L_2$-solution of
of $\Pi(\Phi_{n},f+P_{\epsilon}(\cdot,\psi),\xi_{n})$, which means it is a weak solution such that $u_{n,\epsilon},\Phi_{n}(u_{n,\epsilon})\in L_{2}(\Omega_{T},H^{1}(\mathbb{T}^{d}))$. 
The well-posedness of these equations can be proved using a priori estimates (Proposition \ref{prop:estimate for u_n,epsilon}) and Galerkin method as in \cite{dareiotis2020nonlinear,du2024entropy}. 

\begin{prop}
\label{prop:estimate for u_n,epsilon}Let Assumptions \ref{assu:assumption for phi}
and \ref{assu:assumption for sigma} hold. Suppose $\xi\in L_{m+1}(\Omega,\mathcal{F}_{0};L_{m+1}(\mathbb{T}^{d}))$
and  $\psi\in C(\bar{Q}_{T})$. For all $n\in\mathbb{N}$,
$\epsilon>0$,  and $p\in[2,\infty)$, an $L_{2}$-solution $u_{n,\epsilon}$
satisfies estimates
\begin{equation}
\begin{gathered}\mathbb{E}\sup_{t\leq T}\,\Vert u_{n,\epsilon}(t)\Vert_{L_{2}(\mathbb{T}^{d})}^{p}+\mathbb{E}\big\Vert\nabla\llbracket\sqrt{\Phi_{n}^{\prime}}\rrbracket(u_{n,\epsilon})\big\Vert_{L_{2}(Q_{T})}^{p}+\epsilon^{-\frac{p}{2}}\mathbb{E}\Vert(u_{n,\epsilon}-\psi)^{-}\Vert_{L_{2}(Q_{T})}^{p}\\
+\mathbb{E}\Vert\epsilon^{-1}(u_{n,\epsilon}-\psi)^{-}\Vert_{L_{1}(Q_{T})}^{\frac{p}{2}}\leq C\Big(1+\mathbb{E}\Vert\xi_{n}\Vert_{L_{2}(\mathbb{T}^{d})}^{p}\Big),
\end{gathered}
\label{eq:priori estimate1}
\end{equation}
and
\begin{equation}
\begin{gathered}\mathbb{E}\sup_{t\leq T}\,\Vert u_{n,\epsilon}(t)\Vert_{L_{m+1}(\mathbb{T}^{d})}^{m+1}+\frac{1}{\epsilon}\mathbb{E}\int_{Q_{T}}|(u_{n,\epsilon}-\psi)^{-}|^{2}|u_{n,\epsilon}|^{m-1}\mathrm{d}x\mathrm{d}s\\
+\Vert\nabla\Phi_{n}(u_{n,\epsilon})\Vert_{L_{2}(\Omega\times Q_{T})}^{2}\leq C\Big(1+\Vert\xi_{n}\Vert_{L_{m+1}(\Omega\times\mathbb{T}^{d})}^{m+1}\Big),
\end{gathered}
\label{eq:priori estimate2}
\end{equation}
where the constant C is independent of $n$ and $\epsilon$.
\end{prop}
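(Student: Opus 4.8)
The plan is to obtain both estimates by applying It\^o's formula to suitable functionals of $u_{n,\epsilon}$ in the It\^o formulation \eqref{eq:ito} (with the extra drift $\epsilon^{-1}(u_{n,\epsilon}-\psi)^{-}$), exploiting the standard cancellation between the quadratic-variation correction and the first-order operator $\partial_{x_{i}}(a^{ij}\partial_{x_{j}}\cdot)$, and --- crucially --- a separate, $\epsilon$-uniform control of the penalty mass obtained by integrating the equation over the torus. Throughout write $v_{n,\epsilon}:=(u_{n,\epsilon}-\psi)^{-}\ge0$ and $M:=\Vert\psi\Vert_{C(\bar{Q}_{T})}$; since $u_{n,\epsilon}$ is an $L_{2}$-solution with $u_{n,\epsilon},\Phi_{n}(u_{n,\epsilon})\in L_{2}(\Omega_{T};H^{1}(\mathbb{T}^{d}))$ and lies in $L_{m+1}$, It\^o's formula for $\Vert u_{n,\epsilon}\Vert_{L_{2}}^{2}$ and for $\eta(u_{n,\epsilon})$ with $\eta'(r)=|r|^{m-1}r$ is justified (by approximation for the latter). \textbf{Step 1 ($L_{1}$-control of the penalty).} Integrating \eqref{eq:ito} over $x\in\mathbb{T}^{d}$ annihilates $\Delta\Phi_{n}(u)$, $\partial_{x_{i}}(a^{ij}\partial_{x_{j}}u+b^{i})$ and $\nabla\cdot\sigma^{k}(x,u)$, leaving the pathwise identity $\int_{x}u_{n,\epsilon}(t)=\int_{x}\xi_{n}+\int_{0}^{t}\int_{x}f+\tfrac1\epsilon\int_{0}^{t}\int_{x}v_{n,\epsilon}$. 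As $v_{n,\epsilon}\ge0$ and $|f|\le C(1+|u|)$ by \eqref{eq:assumption for f}, this gives $\tfrac1\epsilon\Vert v_{n,\epsilon}\Vert_{L_{1}(Q_{t})}\le C\big(\sup_{s\le t}\Vert u_{n,\epsilon}(s)\Vert_{L_{2}(\mathbb{T}^{d})}+\Vert\xi_{n}\Vert_{L_{2}}+1\big)$.

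\textbf{Step 2 (proof of \eqref{eq:priori estimate1}).} Apply It\^o to $\Vert u_{n,\epsilon}(t)\Vert_{L_{2}(\mathbb{T}^{d})}^{2}$. The term $\int_{x}\Delta\Phi_{n}(u)\,u=-\Vert\nabla\llbracket\sqrt{\Phi_{n}'}\rrbracket(u)\Vert_{L_{2}}^{2}$ is the main dissipation; the quadratic-variation correction $\int_{x}|\nabla\cdot\sigma^{k}(x,u)|^{2}$ combines with $-2\int_{x}a^{ij}\partial_{x_{i}}u\,\partial_{x_{j}}u$ to cancel all gradient-squared noise contributions (using $2a^{ij}=\sigma_{r}^{ik}\sigma_{r}^{jk}$), the remainder being $\le C(1+\Vert u\Vert_{L_{2}}^{2})$ after an integration by parts; the $f$-term is likewise $\le C(1+\Vert u\Vert_{L_{2}}^{2})$. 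The penalty contributes $\tfrac2\epsilon\int_{x}u\,v_{n,\epsilon}=-\tfrac2\epsilon\Vert v_{n,\epsilon}\Vert_{L_{2}}^{2}+\tfrac2\epsilon\int_{x}\psi\,v_{n,\epsilon}$, whose cross term is bounded via Step 1 by $2M\tfrac1\epsilon\Vert v_{n,\epsilon}\Vert_{L_{1}(Q_{t})}\le C(\sup_{s\le t}\Vert u(s)\Vert_{L_{2}}+\Vert\xi_{n}\Vert_{L_{2}}+1)$. Collecting terms, taking $\sup_{t\le T}$, absorbing the linearly-appearing $\sup_{t}\Vert u(t)\Vert_{L_{2}}$ by Young's inequality against the quadratic term on the left, then taking expectations, applying Burkholder--Davis--Gundy to the stochastic integral (whose integrand is $\le C(1+\Vert u\Vert_{L_{2}}^{2})$ after moving the $x$-derivative off $u$) and Gr\"onwall, yields \eqref{eq:priori estimate1} for $p=2$; the general $p$ follows by raising to the power $p/2$ before taking expectations and repeating the BDG--Gr\"onwall step.

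\textbf{Step 3 (proof of \eqref{eq:priori estimate2}).} Apply It\^o to $\tfrac1{m+1}\Vert u_{n,\epsilon}(t)\Vert_{L_{m+1}(\mathbb{T}^{d})}^{m+1}$. The diffusion term is $-m(m+1)\int_{x}|u|^{m-1}\Phi_{n}'(u)|\nabla u|^{2}$; using $\Phi_{n}'(r)\le C(1+|r|^{m-1})$ (which follows from Assumption~\ref{assu:assumption for phi} for $\Phi_{n}$) one writes $|\nabla\Phi_{n}(u)|^{2}=\Phi_{n}'(u)^{2}|\nabla u|^{2}\le C(1+|u|^{m-1})\Phi_{n}'(u)|\nabla u|^{2}$, so that $\Vert\nabla\Phi_{n}(u)\Vert_{L_{2}}^{2}$ is controlled by this diffusion term plus the dissipation $\Vert\nabla\llbracket\sqrt{\Phi_{n}'}\rrbracket(u)\Vert_{L_{2}}^{2}$ already bounded in Step 2. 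The penalty $\tfrac1\epsilon\int_{x}|u|^{m-1}u\,v_{n,\epsilon}$ splits as $\tfrac1\epsilon\int_{x}\big(|u|^{m-1}u-|\psi|^{m-1}\psi\big)v_{n,\epsilon}+\tfrac1\epsilon\int_{x}|\psi|^{m-1}\psi\,v_{n,\epsilon}$: on $\{u<\psi\}$ monotonicity of $r\mapsto|r|^{m-1}r$ together with the elementary bound $(|b|^{m-1}b-|a|^{m-1}a)(b-a)\ge c_{m}|b-a|^{2}(|a|^{m-1}+|b|^{m-1})$ makes the first integral $\le-\tfrac{c}{\epsilon}\int_{x}v_{n,\epsilon}^{2}|u|^{m-1}$ (producing the middle term of \eqref{eq:priori estimate2} on the left), while the second is $\le M^{m}\tfrac1\epsilon\Vert v_{n,\epsilon}\Vert_{L_{1}(Q_{T})}$, controlled through Step 1 and the already-proven bound on $\mathbb{E}\sup_{t}\Vert u_{n,\epsilon}(t)\Vert_{L_{2}}^{m+1}$ (recalling $\Vert\xi_{n}\Vert_{L_{2}}\le\Vert\xi_{n}\Vert_{L_{m+1}}$ on $\mathbb{T}^{d}$). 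With the $f$-term $\le C(1+\Vert u\Vert_{L_{m+1}}^{m+1})$, finishing as in Step 2 gives \eqref{eq:priori estimate2}.

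The main obstacle is precisely the penalty term: a direct Young's inequality on $\epsilon^{-1}\int_{x}\psi\,v_{n,\epsilon}$ (or on its $|u|^{m-1}$-weighted analogue) generates an uncontrolled factor $\epsilon^{-1}$, so one must first extract the $\epsilon$-uniform $L_{1}$-bound of Step 1 and feed it back. The secondary subtlety is that this bound involves $\sup_{t}\Vert u_{n,\epsilon}(t)\Vert_{L_{2}}$, which also appears on the left of \eqref{eq:priori estimate1}; hence the Gr\"onwall step must be preceded by a Young-type absorption of that supremum, and \eqref{eq:priori estimate2} must be established only after \eqref{eq:priori estimate1}.
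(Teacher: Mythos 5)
Your proof is correct, and it reaches the same estimates as the paper, but the technical route is genuinely different in three places. The paper obtains \emph{both} the $L_{1}$ control $\epsilon^{-1}\Vert(u_{n,\epsilon}-\psi)^{-}\Vert_{L_{1}(Q_{T})}$ and the $L_{2}$ control of the penalty from a \emph{single} It\^o formula, by applying it to the shifted functional $\Vert u_{n,\epsilon}(t)-M-1\Vert_{L_{2}}^{2}$: since $\psi\leq M$, the penalty term $\tfrac{2}{\epsilon}\langle(u-\psi)^{-},u-M-1\rangle$ is pointwise $\leq-\tfrac{2}{\epsilon}\Vert(u-\psi)^{-}\Vert_{L_{2}}^{2}-\tfrac{2}{\epsilon}\int(u-\psi)^{-}$, so both dissipative terms drop out for free. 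You instead extract the $L_{1}$ bound separately (your Step 1) by testing the equation against $\phi\equiv1$ — a clean mass-conservation argument — and then feed it back into the unshifted It\^o identity for $\Vert u\Vert_{L_{2}}^{2}$ after splitting the cross term $\tfrac{2}{\epsilon}\int\psi(u-\psi)^{-}$. Similarly, in the $L_{m+1}$ estimate the paper uses the shift $\Vert u-M\Vert_{L_{m+1}}^{m+1}$ (needing $M\geq 2\sup\psi$ to make $|u-M|\geq|u|$ on $\{u<\psi\}$), whereas you use the unshifted functional together with the monotonicity inequality $(|b|^{m-1}b-|a|^{m-1}a)(b-a)\gtrsim|b-a|^{2}|a|^{m-1}$ and Step 1 again. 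Finally, for the $\nabla\Phi_{n}$ bound, the paper applies a third It\^o formula to $\int_{x}\int_{0}^{u}\Phi_{n}(r)\,\mathrm{d}r$, while you bypass it by writing $|\nabla\Phi_{n}(u)|^{2}\lesssim(1+|u|^{m-1})\Phi_{n}'(u)|\nabla u|^{2}$ and assembling the two dissipation terms already present. Both routes rely on the same identities \eqref{eq:couple term in L2}--\eqref{eq:divergence integral} for the noise, and the same BDG/Gr\"onwall closure. The paper's shift trick is more compact; your Step 1 makes the source of the $\epsilon$-uniform $L_{1}$ bound conceptually clearer (it is the conservation of total mass under divergence-form operators and noise), and your Step 3 avoids one It\^o application. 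One small caution: when you raise Step 1 to the power $p/2$ and absorb $\sup_{s\le\tau}\Vert u(s)\Vert_{L_{2}}^{p/2}$ by Young's inequality against $\sup_{s\le\tau}\Vert u(s)\Vert_{L_{2}}^{p}$, be sure to do this pathwise \emph{before} taking expectations, exactly as you indicate, since the cross term cannot be Gr\"onwall'd directly.
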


\begin{rem}
Proposition \ref{prop:estimate for u_n,epsilon} provides a uniform estimate for the penalty term $\epsilon^{-1}(u_{n,\epsilon}-\psi)^{-}$,
which is essential in establishing the existence of $\nu$.
\end{rem}

\begin{proof}[Proof of Proposition \ref{prop:estimate for u_n,epsilon}]
Since $\psi\in C(\bar{Q}_{T})$, we can assume that $\psi\leq M$
for a constant $M$ big enough. Applying It\^o's formula (cf. \cite[Lemma 2]{dareiotis2015boundedness}),
we have
\begin{align*}
&\Vert u_{n,\epsilon}(t)-M-1\Vert_{L_{2}(\mathbb{T}^{d})}^{2}\\
 & =\Vert\xi_{n}-M-1\Vert_{L_{2}(\mathbb{T}^{d})}^{2}-2\int_{0}^{t}\langle\partial_{x_{i}}\Phi_{n}(u_{n,\epsilon}),\partial_{x_{i}}u_{n,\epsilon}\rangle_{L_{2}(\mathbb{T}^{d})}\mathrm{d}s\\
 & \quad-2\int_{0}^{t}\langle a^{ij}(x,u_{n,\epsilon})\partial_{x_{j}}u_{n,\epsilon}+b^{i}(x,u_{n,\epsilon}),\partial_{x_{i}}u_{n,\epsilon}\rangle_{L_{2}(\mathbb{T}^{d})}\mathrm{d}s\\
 & \quad+2\int_{0}^{t}\langle f(s,\cdot,u_{n,\epsilon})+\frac{1}{\epsilon}(u_{n,\epsilon}-\psi)^{-},u_{n,\epsilon}-M-1\rangle_{L_{2}(\mathbb{T}^{d})}\mathrm{d}s\\
 & \quad-2\int_{0}^{t}\langle\sigma^{ik}(x,u_{n,\epsilon}),\partial_{x_{i}}u_{n,\epsilon}\rangle_{L_{2}(\mathbb{T}^{d})}\mathrm{d}W_{s}^{k}\\
 & \quad+\int_{0}^{t}\sum_{k=1}^{\infty}\Vert\sigma_{r}^{ik}(x,u_{n,\epsilon})\partial_{x_{i}}u_{n,\epsilon}+\sigma_{x_{i}}^{ik}(x,u_{n,\epsilon})\Vert_{L_{2}(\mathbb{T}^{d})}^{2}\mathrm{d}s,\quad\textrm{a.e.}\ t\in[0,T].
\end{align*}
Since
\begin{equation}
\begin{aligned} & \sum_{k=1}^{\infty}|\sigma_{r}^{ik}(u_{n,\epsilon})\partial_{x_{i}}u_{n,\epsilon}+\sigma_{x_{i}}^{ik}(u_{n,\epsilon})|^{2}\\
 & =2a^{ij}(x,u_{n,\epsilon})\partial_{x_{j}}u_{n,\epsilon}\partial_{x_{i}}u_{n,\epsilon}+4b^{i}(x,u_{n,\epsilon})\partial_{x_{i}}u_{n,\epsilon}+\sum_{k=1}^{\infty}\sigma_{x_{i}}^{ik}(x,u_{n,\epsilon})\sigma_{x_{j}}^{jk}(x,u_{n,\epsilon}),
\end{aligned}
\label{eq:couple term in L2}
\end{equation}
and
\begin{equation}
\begin{aligned}\langle b^{i}(x,u_{n,\epsilon}),\partial_{x_{i}}u_{n,\epsilon}\rangle_{L_{2}(\mathbb{T}^{d})} & =-\int_{x}\llbracket b_{x_{i}}^{i}\rrbracket(x,u_{n,\epsilon}),\\
\langle\sigma^{ik}(x,u_{n,\epsilon}),\partial_{x_{i}}u_{n,\epsilon}\rangle_{L_{2}(\mathbb{T}^{d})} & =-\int_{x}\llbracket\sigma_{x_{i}}^{ik}\rrbracket(x,u_{n,\epsilon}),
\end{aligned}
\label{eq:divergence integral}
\end{equation}
in view of the definition of $\sqrt{\Phi_{n}^{\prime}}$ and Assumption
\ref{assu:assumption for sigma}, we have
\begin{align*}
\Vert u_{n,\epsilon}(t)\Vert_{L_{2}(\mathbb{T}^{d})}^{2} & \leq C+\Vert\xi_{n}\Vert_{L_{2}(\mathbb{T}^{d})}^{2}+C\int_{0}^{t}\Vert u_{n,\epsilon}\Vert_{L_{2}(\mathbb{T}^{d})}^{2}\mathrm{d}s\\
 & \quad-2\int_{0}^{t}\Vert\nabla\llbracket\sqrt{\Phi_{n}^{\prime}}\rrbracket(u_{n,\epsilon})\Vert_{L_{2}(\mathbb{T}^{d})}^{2}\mathrm{d}s\\
 &\quad+\frac{2}{\epsilon}\int_{0}^{t}\langle(u_{n,\epsilon}-\psi)^{-},u_{n,\epsilon}-M-1\rangle_{L_{2}(\mathbb{T}^{d})}\mathrm{d}s\\
 & \quad+2\int_{0}^{t}\int_{x}\llbracket\sigma_{x_{i}}^{ik}\rrbracket(x,u_{n,\epsilon})\mathrm{d}W_{s}^{k}.
\end{align*}
Since $\psi\leq M$, we have
\begin{align*}
\frac{2}{\epsilon}\int_{0}^{t}\langle(u_{n,\epsilon}-\psi)^{-},u_{n,\epsilon}-M-1\rangle_{L_{2}(\mathbb{T}^{d})}\mathrm{d}s & \leq-\frac{2}{\epsilon}\int_{0}^{t}\Vert(u_{n,\epsilon}-\psi)^{-}\Vert_{L_{2}(\mathbb{T}^{d})}^{2}\mathrm{d}s\\
 & \quad-\frac{2}{\epsilon}\int_{0}^{t}\int_{x}(u_{n,\epsilon}-\psi)^{-}\mathrm{d}s.
\end{align*}
Raising to the power $p/2$, taking supreme up to time $\tau$ and
expectations, since
\begin{align*}
 & \mathbb{E}\sup_{t\leq\tau}\,\Big|\sum_{k=1}^{\infty}\int_{0}^{t}\int_{x}\llbracket\sigma_{x_{i}}^{ik}\rrbracket(x,u_{n,\epsilon})\mathrm{d}W_{s}^{k}\Big|^{\frac{p}{2}}\\
 & \leq\mathbb{E}\Big|\int_{0}^{\tau}\sum_{k=1}^{\infty}\Big(\int_{x}\llbracket\sigma_{x_{i}}^{ik}\rrbracket(x,u_{n,\epsilon})\Big)^{2}\mathrm{d}s\Big|^{\frac{p}{4}}\\
 & \leq C\mathbb{E}\Big|\int_{0}^{\tau}\Big(\int_{x}\int_{0}^{u_{n,\epsilon}}\big(\sum_{k=1}^{\infty}|\sigma_{x_{i}}^{ik}(x,r)|^{2}\big)^{\frac{1}{2}}\mathrm{d}r\Big)^{2}\mathrm{d}s\Big|^{\frac{p}{4}}\\
 & \leq C\mathbb{E}\Big|\int_{0}^{\tau}1+\Vert u_{n,\epsilon}\Vert_{L_{2}(\mathbb{T}^{d})}^{4}\mathrm{d}s\Big|^{\frac{p}{4}}\\
 & \leq C+\bar{\varepsilon}\mathbb{E}\sup_{t\leq\tau}\,\Vert u_{n,\epsilon}(t)\Vert_{L_{2}(\mathbb{T}^{d})}^{p}+\bar{\varepsilon}^{-1}C\int_{0}^{\tau}\mathbb{E}\sup_{t\leq s}\,\Vert u_{n,\epsilon}(t)\Vert_{L_{2}(\mathbb{T}^{d})}^{p}\mathrm{d}s,
\end{align*}
applying Gr\"onwall's inequality, we have inequality (\ref{eq:priori estimate1}).

To prove inequality (\ref{eq:priori estimate2}) without the term
$\mathbb{E}\Vert\nabla\Phi_{n}(u_{n,\epsilon})\Vert_{L_{2}(Q_{T})}^{2}$,
applying It\^o's formula (cf. \cite[Lemma 2]{dareiotis2015boundedness}),
we have 
\begin{align*}
 & \Vert u_{n,\epsilon}(t)-M\Vert_{L_{m+1}(\mathbb{T}^{d})}^{m+1}\\
 & =\Vert\xi_{n}-M\Vert_{L_{m+1}(\mathbb{T}^{d})}^{m+1}-(m^{2}+m)\int_{0}^{t}\int_{x}\partial_{x_{i}}\Phi_{n}(u_{n,\epsilon})|u_{n,\epsilon}-M|^{m-1}\partial_{x_{i}}u_{n,\epsilon}\mathrm{d}s\\
 & \quad-(m^{2}+m)\int_{0}^{t}\int_{x}\Big(a^{ij}(x,u_{n,\epsilon})\partial_{x_{j}}u_{n,\epsilon}+b^{i}(x,u_{n,\epsilon})\Big)|u_{n,\epsilon}-M|^{m-1}\partial_{x_{i}}u_{n,\epsilon}\mathrm{d}s\\
 & \quad+(m+1)\int_{0}^{t}\int_{x}\big[f(s,x,u_{n,\epsilon})+\frac{1}{\epsilon}(u_{n,\epsilon}-\psi)^{-}\big]|u_{n,\epsilon}-M|^{m-1}(u_{n,\epsilon}-M)\mathrm{d}s\\
 & \quad-(m^{2}+m)\int_{0}^{t}\int_{x}\sigma^{ik}(x,u_{n,\epsilon})|u_{n,\epsilon}-M|^{m-1}\partial_{x_{i}}u_{n,\epsilon}\mathrm{d}W_{s}^{k}\\
 & \quad+\frac{(m^{2}+m)}{2}\int_{0}^{t}\int_{x}\sum_{k=1}^{\infty}\big(\sigma_{r}^{ik}(x,u_{n,\epsilon})\partial_{x_{i}}u_{n,\epsilon}+\sigma_{x_{i}}^{ik}(x,u_{n,\epsilon})\big)^{2}|u_{n,\epsilon}-M|^{m-1}\mathrm{d}s.
\end{align*}
With (\ref{eq:couple term in L2}) and 
\begin{align*}
\int_{x}b^{i}(x,u_{n,\epsilon})|u_{n,\epsilon}-M|^{m-1}\partial_{x_{i}}u_{n,\epsilon} & =-\int_{x}\llbracket b_{x_{i}}^{i}|\cdot-M|^{m-1}\rrbracket(x,u_{n,\epsilon}),\\
\int_{x}\sigma^{ik}(x,u_{n,\epsilon})|u_{n,\epsilon}-M|^{m-1}\partial_{x_{i}}u_{n,\epsilon} & =-\int_{x}\llbracket\sigma_{x_{i}}^{ik}|\cdot-M|^{m-1}\rrbracket(x,u_{n,\epsilon}),
\end{align*}
in view of the monotonicity of $\Phi_{n}$, Assumption \ref{assu:assumption for sigma}, 
and $\psi\leq M$, we have
\begin{equation}
\begin{aligned} & \Vert u_{n,\epsilon}(t)\Vert_{L_{m+1}(\mathbb{T}^{d})}^{m+1}+\frac{m+1}{\epsilon}\int_{0}^{t}\int_{x}|(u_{n,\epsilon}-\psi)^{-}|^{2}|u_{n,\epsilon}|^{m-1}\mathrm{d}s\\
 & \leq C+C\Vert\xi_{n}\Vert_{L_{m+1}(\mathbb{T}^{d})}^{m+1}+C\int_{0}^{t}\Vert u_{n,\epsilon}\Vert_{L_{m+1}(\mathbb{T}^{d})}^{m+1}\mathrm{d}s\\
 & \quad+(m^{2}+m)\int_{0}^{t}\int_{x}\llbracket\sigma_{x_{i}}^{ik}|\cdot-M|^{m-1}\rrbracket(x,u_{n,\epsilon})\mathrm{d}W_{s}^{k}.
\end{aligned}
\label{eq:second estimate ito formula}
\end{equation}
Taking supreme up to time $\tau$ and taking expectations, since
\begin{align*}
 & \mathbb{E}\sup_{t\leq\tau}\,\Big|\sum_{k=1}^{\infty}\int_{0}^{t}\int_{x}\llbracket|\cdot-M|^{m-1}\sigma_{x_{i}}^{ik}\rrbracket(x,u_{n,\epsilon})\mathrm{d}W_{s}^{k}\Big|\\
 & \leq C\mathbb{E}\Big|\int_{0}^{\tau}\sum_{k=1}^{\infty}\Big(\int_{x}\llbracket|\cdot-M|^{m-1}\sigma_{x_{i}}^{ik}\rrbracket(x,u_{n,\epsilon})\Big)^{2}\mathrm{d}s\Big|^{\frac{1}{2}}\\
 & \leq C\mathbb{E}\Big|\int_{0}^{\tau}\Big(\int_{x}\int_{0}^{u_{n,\epsilon}}\big|\sum_{k=1}^{\infty}|\sigma_{x_{i}}^{ik}(x,r)|^{2}\big|^{\frac{1}{2}}|r-M|^{m-1}\mathrm{d}r\Big)^{2}\mathrm{d}s\Big|^{\frac{1}{2}}\\
 & \leq C\mathbb{E}\Big|\int_{0}^{\tau}1+\Vert u_{n,\epsilon}\Vert_{L_{m+1}(\mathbb{T}^{d})}^{2(m+1)}\mathrm{d}s\Big|^{\frac{1}{2}}\\
 & \leq C+\bar{\varepsilon}\mathbb{E}\sup_{t\leq\tau}\,\Vert u_{n,\epsilon}(t)\Vert_{L_{m+1}(\mathbb{T}^{d})}^{m+1}+\bar{\varepsilon}^{-1}C\int_{0}^{\tau}\mathbb{E}\sup_{t\leq s}\,\Vert u_{n,\epsilon}(t)\Vert_{L_{m+1}(\mathbb{T}^{d})}^{m+1}\mathrm{d}s,
\end{align*}
applying Gr\"onwall's inequality, we obtain the desired inequality.

To estimate the term $\mathbb{E}\Vert\nabla\Phi_{n}(u_{n,\epsilon})\Vert_{L_{2}(Q_{T})}^{2}$,
using It\^o's formula (cf. \cite{krylov2013relatively}), we have
\begin{align*}
 & \int_{x}\int_{0}^{u_{n,\epsilon}(t,x)}\Phi_{n}(r)\mathrm{d}r\\
 & =\int_{x}\int_{0}^{\xi_{n}}\Phi_{n}(r)\mathrm{d}r-\int_{0}^{t}\int_{x}\partial_{x_{i}}\Phi_{n}(u_{n,\epsilon})\partial_{x_{i}}\Phi_{n}(u_{n,\epsilon})\mathrm{d}s\\
 & \quad-\int_{0}^{t}\int_{x}\Big(\partial_{x_{i}}a^{ij}(x,u_{n,\epsilon})\partial_{y_{j}}u_{n,\epsilon}+b^{i}(x,u_{n,\epsilon})\Big)\partial_{x_{i}}\Phi_{n}(u_{n,\epsilon})\mathrm{d}s\\
 & \quad+\int_{0}^{t}\int_{x}\Big(f(s,x,u_{n,\epsilon})+\frac{1}{\epsilon}(u_{n,\epsilon}-\psi)^{-}\Big)\Phi_{n}(u_{n,\epsilon})\mathrm{d}s\\
 & \quad+\frac{1}{2}\int_{0}^{t}\int_{x}\sum_{k=1}^{\infty}|\sigma_{x_{i}}^{ik}(x,u_{n,\epsilon})+\sigma_{r}^{ik}(x,u_{n,\epsilon})\partial_{x_{i}}u_{n,\epsilon}|^{2}\Phi_{n}^{\prime}(u_{n,\epsilon})\mathrm{d}s\\
 & \quad-\sum_{k=1}^{\infty}\int_{0}^{t}\int_{x}\sigma^{ik}(x,u_{n,\epsilon})\partial_{x_{i}}\Phi_{n}(u_{n,\epsilon})\mathrm{d}W_{s}^{k}.
\end{align*}
Since (\ref{eq:couple term in L2}) and
\begin{align*}
\int_{x}b^{i}(x,u_{n,\epsilon})\Phi_{n}^{\prime}(u_{n,\epsilon})\partial_{x_{i}}u_{n,\epsilon} & =-\int_{x}\llbracket b_{x_{i}}^{i}\Phi_{n}^{\prime}\rrbracket(x,u_{n,\epsilon}),\\
\int_{x}\sigma^{ik}(x,u_{n,\epsilon})\Phi_{n}^{\prime}(u_{n,\epsilon})\partial_{x_{i}}u_{n,\epsilon} & =-\int_{x}\llbracket\sigma_{x_{i}}^{ik}\Phi_{n}^{\prime}\rrbracket(x,u_{n,\epsilon}),
\end{align*}
in view of Assumption \ref{assu:assumption for phi} and the monotonicity
of $\Phi_{n}$, we have
\begin{equation}
\begin{aligned} & \mathbb{E}\Vert\nabla\Phi_{n}(u_{n,\epsilon})\Vert_{L_{2}(Q_{T})}^{2}\\
 & \leq C+C\mathbb{E}\Vert\xi_{n}\Vert_{L_{m+1}(\mathbb{T}^{d})}^{m+1}+C\mathbb{E}\Vert u_{n,\epsilon}\Vert_{L_{m+1}(Q_{T})}^{m+1}+\int_{t,x}\frac{1}{\epsilon}(u_{n,\epsilon}-\psi)^{-}\Phi_{n}(M)\\
 & \leq C+C\mathbb{E}\Vert\xi_{n}\Vert_{L_{m+1}(\mathbb{T}^{d})}^{m+1}+C\int_{t,x}\frac{1}{\epsilon}(u_{n,\epsilon}-\psi)^{-}\\
 & \leq C+C\mathbb{E}\Vert\xi_{n}\Vert_{L_{m+1}(\mathbb{T}^{d})}^{m+1}.
\end{aligned}
\label{eq:second estimate ito formula-1}
\end{equation}
Therefore, this proposition is proved.
\end{proof}
Next, we prove the uniform ($\star$)-property of $L_{2}$-solution
$u_{n,\epsilon}$. 
\begin{prop}
\label{prop:star property}Let Assumptions \ref{assu:assumption for phi}
and \ref{assu:assumption for sigma} hold. For any $n\in\mathbb{N}$,
$\epsilon>0$,  $\xi\in L_{m+1}(\Omega,\mathcal{F}_{0};L_{m+1}(\mathbb{T}^{d}))$, 
and  $\psi\in C(\bar{Q}_{T})$, let $u_{n,\epsilon}$ be
the $L_{2}$-solution to $\Pi(\Phi_{n},f+P_{\epsilon}(\cdot,\psi),\xi_{n})$.
Then, the function $u_{n,\epsilon}$ has the ($\star$)-property.
If further $\xi\in L_{4}(\Omega;L_{2}(\mathbb{T}^{d}))$, the constant
$C$ in Definition \ref{def:star property} is independent of $n$
and $\epsilon$.
\end{prop}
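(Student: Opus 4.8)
The plan is to deduce the ($\star$)-property of $u_{n,\epsilon}$ from its parabolic regularity as an $L_2$-solution, arguing exactly as for the equations without obstacle in \cite{dareiotis2019entropy,dareiotis2020nonlinear,du2024entropy}, and to check that the penalization term $\epsilon^{-1}(u_{n,\epsilon}-\psi)^-$ and the approximation $\Phi_n$ enter only through the a priori bounds of Proposition~\ref{prop:estimate for u_n,epsilon}. The key structural remark is that both the functional $H$ and the bilinear form $\mathcal{B}$ in Definition~\ref{def:star property} depend only on the noise coefficients $\sigma^k$ and their derivatives (together with the auxiliary data $h,\phi$), and not at all on $\Phi_n$, on $f$, or on the penalty; so the verification concerns only the noise part of the solution, for which $\Pi(\Phi_n,f+P_\epsilon(\cdot,\psi),\xi_n)$ behaves just like $\Pi(\Phi_n,f,\xi_n)$.

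Fix $(\tilde u,g,\varphi,h)$ and $\phi_\theta$ as in Definition~\ref{def:star property} and write $u:=u_{n,\epsilon}$. Since $u,\Phi_n(u)\in L_2(\Omega_T;H^1(\mathbb{T}^d))$ and $u$ satisfies the It\^o form of its equation tested against smooth functions, I would first check that $H(\cdot,\cdot,u(\cdot,\cdot)\,|\,\tilde u,\phi_\theta,h)\in L_1(\Omega_T\times\mathbb{T}^d)$; this follows from the Burkholder--Davis--Gundy inequality together with $\sum_k\|\sigma^k\|_{C^3(\bar{Q}_T)}^2\le K$, the compact support of $h'$, and the moment bound $\mathbb{E}\|\tilde u\|_{L_{m+1}(Q_T)}^{m+1}<\infty$. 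For the quantitative estimate, one substitutes $z=u(t,x)$ into the It\^o integral defining $H$ and expands $u(t,x)$ by its own It\^o expansion between the inner integration time $s$ and $t$; since $\rho_\theta(t-s)$ localizes the integration to $|t-s|<\theta$ and the integrand of $H$ is adapted for $s>t$, the surviving contribution to $\mathbb{E}\int_{t,x}H(t,x,u(t,x)\,|\,\tilde u,\phi_\theta,h)$ is, up to a remainder, the quadratic covariation between the noise $\nabla\!\cdot\!\sigma^k(x,u)\,\mathrm{d}W^k$ driving $u$ and the three noise terms in $H$. A direct It\^o-calculus manipulation, using the explicit form of these noise terms and Fubini for stochastic integrals, reproduces from this covariation exactly the nine terms of the bilinear form $\mathcal{B}(u,\tilde u\,|\,\phi_\theta,h)$.

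The remainder is the only genuine work, and I would treat it as in \cite{du2024entropy}. It decomposes into (i) terms measuring the deviation of the coefficients and arguments produced by the covariation from those in $\mathcal{B}$ (for instance $\sigma^k$ evaluated along the increment of $u$ over $[s,t]$ rather than at $(t,x)$), which is of lower order in $\theta$ by the $C^3$-smoothness of $\sigma^k$ and the a priori moment bounds on $u$; and (ii) the contribution of the parabolic term $\Delta\Phi_n(u)$ and of the drift $f+\epsilon^{-1}(\cdot-\psi)^-$ in the It\^o increment of $u$ over $[s,t]$, which is controlled by the parabolic regularity $\nabla\llbracket\sqrt{\Phi_n'}\rrbracket(u)\in L_2(Q_T)$ together with Proposition~\ref{prop:estimate for u_n,epsilon}, after an interpolation balancing a surplus power of $\theta$ against this gradient norm and the $L_{m+1}$-norm of $u$; this is precisely what fixes the exponent $\mu=(3m+5)/(4m+4)$ and yields the bound $C\theta^{1-\mu}$. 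In particular the penalty enters only through $\int_{Q_T}\epsilon^{-1}(u-\psi)^-$ and $\frac{1}{\epsilon}\int_{Q_T}|(u-\psi)^-|^2|u|^{m-1}$, both bounded uniformly in $n,\epsilon$ by $C(1+\mathbb{E}\|\xi_n\|_{L_{m+1}(\mathbb{T}^d)}^{m+1})$ via \eqref{eq:priori estimate1}--\eqref{eq:priori estimate2}, so it adds nothing new. This gives the ($\star$)-property with a constant $C$ depending only on $d,K,T$ and on $\mathbb{E}\|\xi_n\|_{L_2(\mathbb{T}^d)}^4+\mathbb{E}\|\xi_n\|_{L_{m+1}(\mathbb{T}^d)}^{m+1}$, which is finite for every fixed $n$ because $|\xi_n|\le n$. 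Finally, since $|\xi_n|\le|\xi|$ pointwise one has $\mathbb{E}\|\xi_n\|_{L_2}^4\le\mathbb{E}\|\xi\|_{L_2}^4$ and $\mathbb{E}\|\xi_n\|_{L_{m+1}}^{m+1}\le\mathbb{E}\|\xi\|_{L_{m+1}}^{m+1}$, so under the extra hypothesis $\xi\in L_4(\Omega;L_2(\mathbb{T}^d))$ the constant $C$ becomes independent of $n$ and $\epsilon$. The hard part will be the bookkeeping in matching the covariation with $\mathcal{B}$ and in the interpolation of step (ii); but this is the same computation as in the no-obstacle theory, and the content of the proposition is exactly that adding the penalty does not disturb it.
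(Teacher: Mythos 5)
Your proposal is essentially the same proof as the paper's, at a somewhat less formal level. The paper's argument is a double mollification (in $x$ via $\varrho_\gamma$ and in the substitution $z\mapsto u$ via $\rho_\lambda$) followed by It\^o's formula applied to $\rho_\lambda(u_{n,\epsilon}^{(\gamma)}(\cdot,x)-z)$; this produces six terms $N^{(1)}_{\lambda,\gamma},\dots,N^{(6)}_{\lambda,\gamma}$, of which $N^{(4)}_{\lambda,\gamma}$ converges to the quadratic covariation that reproduces $\mathcal{B}$, and $N^{(1)},N^{(2)},N^{(3)},N^{(5)}$ are handled exactly as in \cite[Lemma~5.3]{dareiotis2020nonlinear}. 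Your ``substitute $z=u(t,x)$ and expand by It\^o over $[s,t]$'' is precisely the informal picture of what this mollification makes rigorous, and your observation that $H$ and $\mathcal{B}$ contain only $\sigma$ (not $\Phi_n$, $f$, or the penalty) is exactly the structural point the paper relies on.

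The one genuinely new term is $N^{(6)}_{\lambda,\gamma}$, coming from $f+\epsilon^{-1}(u_{n,\epsilon}-\psi)^-$. The paper integrates by parts in $z$ and bounds it by
$C\theta\,(\mathbb{E}\|\partial_z H\|_{L_\infty}^2)^{1/2}\,(\mathbb{E}\|P_\epsilon(u_{n,\epsilon},\psi)\|_{L_1(Q_T)}^2)^{1/2}\leq C\theta^{1-\mu}$,
using estimate~\eqref{eq:priori estimate1} with $p=4$ for the penalty factor and \cite[Remark~3.2, Lemma~3.8]{dareiotis2020nonlinear} for the $\theta^{-\mu}$ growth of $\|\partial_z H\|$. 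You correctly identify that the penalty enters only through these a priori bounds, and that the uniformity in $n,\epsilon$ under $\xi\in L_4(\Omega;L_2)$ follows because $|\xi_n|\le|\xi|$ makes the moment constants uniform. One small imprecision: you attribute the value $\mu=(3m+5)/(4m+4)$ to an interpolation between the surplus power of $\theta$ and the parabolic gradient norm; in fact $\mu$ is already fixed by the $\theta^{-\mu}$ growth of $\|\partial_z H\|_{L_\infty}$ inherited from the no-obstacle theory, and the new drift/penalty term is lower order ($O(\theta)\cdot O(\theta^{-\mu})$), so it does not alter $\mu$. This does not affect the correctness of the argument.
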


\begin{proof}
Fix $\theta>0$ small enough. We will apply the approximation method
in the proof of \cite[Lemma 5.3]{dareiotis2020nonlinear}. For a function
$g\in L_{2}(\mathbb{T}^{d})$ and a constant $\gamma>0$, let $g^{(\gamma)}:=\varrho_{\gamma}*g$
be the mollification. Then, the function $u_{n,\epsilon}^{(\gamma)}$
satisfies (pointwise) the equation
\begin{align*}
du_{n,\epsilon}^{(\gamma)} & =\bigg[\Big(\Delta\Phi_{n}(u_{n,\epsilon})+\partial_{x_{i}}\big(a^{ij}(\cdot,u_{n,\epsilon})\partial_{x_{j}}u_{n,\epsilon}+b^{i}(\cdot,u_{n,\epsilon})\big)\\
 & \quad+f(t,\cdot,u_{n,\epsilon})+\frac{1}{\epsilon}(u_{n,\epsilon}-\psi)^{-}\Big)^{(\gamma)}\bigg]\mathrm{d}t+\big(\partial_{x_{i}}\sigma^{ik}(\cdot,u_{n,\epsilon})\big)^{(\gamma)}\mathrm{d}W_{t}^{k}.
\end{align*}
Applying It\^{o}'s formula, we have 
\[
\int_{t,x,z}H(t,x,z|\tilde{u},\phi{}_{\theta},h)\big(\rho_{\lambda}(u_{n,\epsilon}^{(\gamma)}(t,x)-z)-\rho_{\lambda}(u_{n,\epsilon}^{(\gamma)}(t-\theta,x)-z)\big)=\sum_{l=1}^{6}N_{\lambda,\gamma}^{(l)},
\]
where
\begin{align*}
N_{\lambda,\gamma}^{(1)} & :=\int_{t,x,z}H(t,x,z|\tilde{u},\phi{}_{\theta},h)\int_{t-\theta}^{t}\rho_{\lambda}^{\prime}(u_{n,\epsilon}^{(\gamma)}(s,x)-z)\Delta\big(\Phi_{n}(u_{n,\epsilon})\big)^{(\gamma)}\mathrm{d}s,\\
N_{\lambda,\gamma}^{(2)} & :=\int_{t,x,z}H(t,x,z|\tilde{u},\phi{}_{\theta},h)\int_{t-\theta}^{t}\rho_{\lambda}^{\prime}(u_{n,\epsilon}^{(\gamma)}(s,x)-z)\Big(\partial_{x_{i}}\big(a^{ij}(x,u_{n,\epsilon})\partial_{x_{j}}u_{n,\epsilon}\\
&\qquad\qquad\qquad\qquad\qquad\qquad+2b^{i}(x,u_{n,\epsilon})\big)\Big)^{(\gamma)}\mathrm{d}s,\\
N_{\lambda,\gamma}^{(3)} & :=\int_{t,x,z}H(t,x,z|\tilde{u},\phi{}_{\theta},h)\int_{t-\theta}^{t}\rho_{\lambda}^{\prime}(u_{n,\epsilon}^{(\gamma)}(s,x)-z)\Big(-\partial_{x_{i}}b^{i}(x,u_{n,\epsilon})\Big)^{(\gamma)}\mathrm{d}s,\\
N_{\lambda,\gamma}^{(4)} & :=\int_{t,x,z}H(t,x,z|\tilde{u},\phi{}_{\theta},h)\int_{t-\theta}^{t}\rho_{\lambda}^{\prime}(u_{n,\epsilon}^{(\gamma)}(s,x)-z)\Big(\partial_{x_{i}}\sigma^{ik}(x,u_{n,\epsilon})\Big)^{(\gamma)}\mathrm{d}W_{s}^{k},\\
N_{\lambda,\gamma}^{(5)} & :=\frac{1}{2}\int_{t,x,z}H(t,x,z|\tilde{u},\phi{}_{\theta},h)\int_{t-\theta}^{t}\rho_{\lambda}^{\prime\prime}(u_{n,\epsilon}^{(\gamma)}(s,x)-z)\sum_{k=1}^{\infty}\Big|\Big(\partial_{x_{i}}\sigma^{ik}(x,u_{n,\epsilon})\Big)^{(\gamma)}\Big|^{2}\mathrm{d}s,\\
N_{\lambda,\gamma}^{(6)} & :=\int_{t,x,z}H(t,x,z|\tilde{u},\phi{}_{\theta},h)\int_{t-\theta}^{t}\rho_{\lambda}^{\prime}(u_{n,\epsilon}^{(\gamma)}(s,x)-z)\big(f(s,\cdot,u_{n,\epsilon})+\frac{1}{\epsilon}(u_{n,\epsilon}-\psi)^{-}\big)^{(\gamma)}\mathrm{d}s.
\end{align*}
For $N_{\lambda,\gamma}^{(6)}$, using the integration by parts formula
in $z$, we have
\[
\mathbb{E}|N_{\lambda,\gamma}^{(6)}|\leq N_{1}^{(6)}+N_{2}^{(6)},
\]
where
\begin{align*}
N_{1}^{(6)} & :=\mathbb{E}\big|\int_{t,x,z}\partial_{z}H(t,x,z|\tilde{u},\phi{}_{\theta},h)\int_{t-\theta}^{t}\rho_{\lambda}(u_{n,\epsilon}^{(\gamma)}(s,x)-z)\big(f(s,\cdot,u_{n,\epsilon})\big)^{(\gamma)}\mathrm{d}s\big|,\\
N_{2}^{(6)} & :=\mathbb{E}\big|\int_{t,x,z}\partial_{z}H(t,x,z|\tilde{u},\phi{}_{\theta},h)\int_{t-\theta}^{t}\rho_{\lambda}(u_{n,\epsilon}^{(\gamma)}(s,x)-z)\big(\frac{1}{\epsilon}(u_{n,\epsilon}-\psi)^{-}\big)^{(\gamma)}\mathrm{d}s\big|.
\end{align*}
Applying Assumption \ref{assu:assumption for sigma}, \cite[Remark 3.2 and Lemma 3.8]{dareiotis2020nonlinear}
and Lemma \ref{prop:estimate for u_n,epsilon}, we have
\begin{equation}
\begin{aligned}N_{1}^{(6)} & \leq\mathbb{E}\Big|\Vert\partial_{z}H\Vert_{L_{\infty}(Q_{T}\times\mathbb{R})}\int_{t,x}\int_{t-\theta}^{t}\big(f(s,\cdot,u_{n,\epsilon})\big)^{(\gamma)}\int_{z}\rho_{\lambda}(u_{n,\epsilon}^{(\gamma)}(s,x)-z)\mathrm{d}s\Big|\\
 & \leq2\mathbb{E}\Big|\Vert\partial_{z}H\Vert_{L_{\infty}(Q_{T}\times\mathbb{R})}\theta\int_{t,x}\big(f(t,\cdot,u_{n,\epsilon})\big){}^{(\gamma)}\Big|\\
 & \leq C\theta\Big(\mathbb{E}\Vert\partial_{z}H\Vert_{L_{\infty}(Q_{T}\times\mathbb{R})}^{2}\Big)^{1/2}\Big(\Vert f(\cdot,\cdot,u_{n,\epsilon})\Vert_{L_{2}(\Omega\times Q_{T})}^{2}\Big)^{1/2}\\
 & \leq C\theta\Big(\mathbb{E}\Vert\partial_{z}H\Vert_{L_{\infty}(Q_{T}\times\mathbb{R})}^{2}\Big)^{1/2}\Big(1+\Vert u_{n,\epsilon}\Vert_{L_{2}(\Omega\times Q_{T})}^{2}\Big)^{1/2}\leq C\theta^{1-\mu},
\end{aligned}
\label{eq:star estimate for F}
\end{equation}
where $\mu:=(3m+5)/(4m+4)<1$, and the constant $C$ is independent
with $n$ and $\epsilon$. Similarly, using estimate (\ref{eq:priori estimate1}),
we have 
\begin{align}
N_{2}^{(6)} & \leq C\theta\Big(\mathbb{E}\Vert\partial_{z}H\Vert_{L_{\infty}(Q_{T}\times\mathbb{R})}^{2}\Big)^{1/2}\Big(\mathbb{E}\Vert P_{\epsilon}(u_{n,\epsilon},\psi)\Vert_{L_{1}(Q_{T})}^{2}\Big)^{1/2}\leq C\theta^{1-\mu}.\label{eq:star estimate for G}
\end{align}
The estimates for $N_{\lambda,\gamma}^{(i)}$, $i=1,\ldots,5$ can
be obtained as in the proof of \cite[Lemma 5.3]{dareiotis2020nonlinear}.
Combining these estimates and following the proof of \cite[Lemma 5.3]{dareiotis2020nonlinear},
we have
\begin{align*}
 & \mathbb{E}\int_{t,x,a}H(t,x,u_{n,\epsilon}(t,x)|\tilde{u},\phi{}_{\theta},h)\\
 & \leq\limsup_{\lambda\rightarrow0}\limsup_{\gamma\rightarrow0}\mathbb{E}\big[\big|N_{\lambda,\gamma}^{(1)}\big|+\big|N_{\lambda,\gamma}^{(3)}\big|+\big|N_{\lambda,\gamma}^{(6)}\big|\big]+\limsup_{\lambda\rightarrow0}\limsup_{\gamma\rightarrow0}\mathbb{E}\big|N_{\lambda,\gamma}^{(2)}+N_{\lambda,\gamma}^{(5)}\big|\\
 & \quad+\lim_{\lambda\rightarrow0}\lim_{\gamma\rightarrow0}\mathbb{E}N_{\lambda,\gamma}^{(4)}\leq C\theta^{1-\mu}+\mathcal{B}(u_{n,\epsilon},\tilde{u}|\phi{}_{\theta},h),
\end{align*}
Moreover, when $\xi\in L_{4}(\Omega;L_{2}(\mathbb{T}^{d}))$, the
constant $C$ is independent with $n$ and $\epsilon$, which concludes the proof of 
the proposition.
\end{proof}
With the help of the following result, we can obtain the ($\star$)-property of $u$.
\begin{lem}[Corollary 3.9 in \cite{dareiotis2020nonlinear}]
\label{lem:appro for star}Let $\{u_{n}\}_{n\in\mathbb{N}}$ be a
sequence bounded in $L_{m+1}(\Omega_{T}\times\mathbb{T}^{d})$ satisfying
the ($\star$)-property uniformly in $n$, which means that the constant
$C$ in Definition \ref{def:star property} is independent of $n$.
If $u_{n}$ converges to a function $u$ almost surely on $\Omega\times Q_{T}$,
then $u$ has the ($\star$)-property.
\end{lem}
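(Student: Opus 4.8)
The plan is to pass to the limit $n\to\infty$ in the ($\star$)-inequality satisfied by each $u_n$. Fix test data $(\tilde u,g,\varphi,h)\in L_{m+1}(\Omega_T\times\mathbb{T}^d)\times C^\infty(\mathbb{T}^d\times\mathbb{T}^d)\times C_c^\infty((0,T))\times C^\infty(\mathbb{R})$ with $h'\in C_c^\infty(\mathbb{R})$, and fix $\theta>0$ small enough that $\phi_\theta(t,x,s,y)=g(x,y)\rho_\theta(t-s)\varphi(\frac{t+s}{2})$ is admissible; since the smallness threshold for $\theta$ depends only on $\mathrm{dist}(\mathrm{supp}\,\varphi,\{0,T\})$ and not on $u_n$, the uniform ($\star$)-property gives, for every $n$,
\[
\mathbb{E}\int_{t,x} H\big(t,x,u_n(t,x)\,|\,\tilde u,\phi_\theta,h\big)\le C\theta^{1-\mu}+\mathcal{B}(u_n,\tilde u\,|\,\phi_\theta,h),
\]
with $C$ independent of $n$ and $\theta$. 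It then suffices to show that both sides converge to the corresponding quantities with $u_n$ replaced by $u$.

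For the left-hand side, the key point is that $z\mapsto H(t,x,z\,|\,\tilde u,\phi_\theta,h)$ is a stochastic integral depending smoothly on the parameter $z$: since $h$ and $h'$ are bounded and $\sigma$ satisfies Assumption~\ref{assu:assumption for sigma}, the integrand and its $z$-derivative are bounded (uniformly in $z$) by an affine function of $|\tilde u|$, so the Burkholder--Davis--Gundy inequality yields moment bounds $\mathbb{E}\|H(t,x,\cdot)\|_{L_\infty(\mathbb{R})}^2+\mathbb{E}\|\partial_z H(t,x,\cdot)\|_{L_\infty(\mathbb{R})}^2\le C$, with $C$ depending on the data but not on $(t,x)$ — this is the same estimate already used in the proof of Proposition~\ref{prop:star property}. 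In particular $H(\cdot,\cdot,u(\cdot,\cdot)\,|\,\tilde u,\phi_\theta,h)\in L_2(\Omega_T\times\mathbb{T}^d)\subset L_1(\Omega_T\times\mathbb{T}^d)$, giving the first requirement of the ($\star$)-property. Writing $H(\ldots,u_n)-H(\ldots,u)=\int_u^{u_n}\partial_z H(\ldots,z)\,\mathrm{d}z$ and using Cauchy--Schwarz,
\[
\mathbb{E}\int_{t,x}\big|H(\ldots,u_n)-H(\ldots,u)\big|\le\Big(\mathbb{E}\int_{t,x}\|\partial_z H(t,x,\cdot)\|_{L_\infty}^2\Big)^{1/2}\Big(\mathbb{E}\int_{t,x}|u_n-u|^2\Big)^{1/2}.
\]
Since $\{u_n\}$ is bounded in $L_{m+1}$ with $m+1>2$ and $u_n\to u$ almost surely on the finite-measure space $\Omega_T\times\mathbb{T}^d$, the family $\{|u_n-u|^2\}$ is uniformly integrable, so $u_n\to u$ in $L_2(\Omega_T\times\mathbb{T}^d)$ by Vitali's theorem and the right-hand side tends to $0$.

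For the right-hand side, $\mathcal{B}(\cdot,\tilde u\,|\,\phi_\theta,h)$ is the explicit deterministic functional in the statement; each of its nine terms has the form $\mathbb{E}\int(\text{bounded }\phi_\theta\text{-derivative})\cdot I(u_n,\tilde u)$, where, because $h'$ has compact support and $\sigma$ is bounded together with its derivatives, $I(\cdot,\tilde u)$ is continuous in its first argument with $|I(u_n,\tilde u)|\le C(1+|u_n-\tilde u|^2)$ for the double-integral terms and $|I(u_n,\tilde u)|\le C(1+|u_n-\tilde u|)$ for the single-integral ones. Using a.s. convergence of $u_n$ together with uniform integrability of $\{|u_n-\tilde u|^2\}$ (valid since $u_n$ is $L_{m+1}$-bounded, $\tilde u\in L_{m+1}$, and $m>1$), Vitali's theorem again gives $\mathcal{B}(u_n,\tilde u\,|\,\phi_\theta,h)\to\mathcal{B}(u,\tilde u\,|\,\phi_\theta,h)$. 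Combining the two convergences and letting $n\to\infty$ in the displayed inequality yields $\mathbb{E}\int_{t,x}H(t,x,u(t,x)\,|\,\tilde u,\phi_\theta,h)\le C\theta^{1-\mu}+\mathcal{B}(u,\tilde u\,|\,\phi_\theta,h)$, i.e. $u$ has the ($\star$)-property. The main obstacle is the left-hand side: one must genuinely differentiate the stochastic integral $H$ in the parameter $z$ and control $\sup_z|\partial_z H|$ in $L_2(\Omega)$ (e.g. via BDG plus a Kolmogorov-type continuity estimate in $z$), and then upgrade the a.s.\ convergence of $u_n$ to $L_2$-convergence — both routine under Assumption~\ref{assu:assumption for sigma} and the $L_{m+1}$-bound, but requiring care with the many terms in $H$ and $\mathcal{B}$.
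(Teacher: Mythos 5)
The paper offers no proof of this lemma; it is cited verbatim as Corollary 3.9 of \cite{dareiotis2020nonlinear}, so there is no internal argument to compare against. Your reconstruction — fix the test data and $\theta$, invoke the uniform $(\star)$-inequality for each $u_n$, and pass to the limit on both sides — is the natural one and appears sound: on the left you correctly reduce to the moment bound $\mathbb{E}\Vert\partial_z H\Vert_{L_\infty(Q_T\times\mathbb{R})}^2<\infty$ (the same estimate the paper itself invokes in the proof of Proposition \ref{prop:star property}, there delegated to \cite[Remark 3.2, Lemma 3.8]{dareiotis2020nonlinear}) together with $L_2$-convergence of $u_n$ by Vitali; on the right you correctly observe that, because $h'$ has compact support and $\sigma$ and its derivatives are bounded, every term of $\mathcal{B}$ has at most linear growth in $|u_n-\tilde u|$, so a.s.\ convergence plus uniform integrability (from the uniform $L_{m+1}$-bound, $m>1$) gives $\mathcal{B}(u_n,\tilde u\,|\,\phi_\theta,h)\to\mathcal{B}(u,\tilde u\,|\,\phi_\theta,h)$. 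Your observation that the threshold of admissible $\theta$ depends only on $\mathrm{dist}(\mathrm{supp}\,\varphi,\{0,T\})$, not on $n$, is the point that makes the passage to the limit legitimate and is worth keeping explicit. The only place you are a bit loose is the claim that BDG ``yields'' the $\sup_z$ moment bound — BDG alone gives pointwise-in-$z$ moments, and a Kolmogorov/Garsia continuity argument in $z$ (as you note parenthetically) is what actually produces the $L_\infty$-in-$z$ control; since the paper itself outsources exactly this estimate, that is a gap in the cited literature rather than in your argument.
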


\section{Comparison principle\label{sec:Comparison-principle}}

In this section, we shall prove the monotonocity of $u_{n,\epsilon}$
with respect to $\epsilon$. Since the doubling-variables technique
in $L_{1}$-estimate can deal with the nonlinear term $\Delta\Phi(u_{n,\epsilon})$,
we use it instead of directly applying the It\^o's formula to $(u_{\epsilon_{1}}-u_{\epsilon_{2}})^{+}$.

The method here basically
follows the ones in the proof of \cite[Lemma 5.1]{liu2024obstacle}.
Note that in view of the definition of $L_{2}$-solution, the smoothness
of $\sqrt{\Phi_{n}^{\prime}}$ and inequality (\ref{eq:priori estimate2}),
we have that the $L_{2}$-solution $u_{n}$ is also an entropy solution
to $\Pi(\Phi_{n},f,\xi_{n})$ using It\^o's formula. 

\begin{prop}
\label{prop:comparison}Fix $n\in\mathbb{N}$. Let Assumptions \ref{assu:assumption for phi}
and \ref{assu:assumption for sigma} hold for both $f$ and $\tilde{f}$.
For any $\xi_{n}\in L_{m+1}(\Omega,\mathcal{F}_{0};L_{m+1}(\mathbb{T}^{d}))$,
suppose that $f(\cdot,\cdot,r)\leq\tilde{f}(\cdot,\cdot,r)$ on $Q_T$ for each $r\in\mathbb{R}$, and $u_{n}$ and $\tilde{u}_{n}$ are
entropy solutions to $\Pi(\Phi_{n},f,\xi_{n})$ and $\Pi(\Phi_{n},\tilde{f},\xi_{n})$,
respectively. Then, we have $u_{n}\leq\tilde{u}_{n}$ almost surely
on $\Omega\times Q_{T}$.
\end{prop}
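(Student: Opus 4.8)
The plan is to run Kruzhkov's doubling-variables argument, exactly as in the proof of Proposition~\ref{prop:uniqueentropy}, but now tracking the positive part $(u_n-\tilde u_n)^+$ instead of $|u_n-\tilde u_n|$ and exploiting the sign hypothesis $f\le\tilde f$. Three features make the present situation lighter: there are no obstacle measures to control; the two solutions share the initial datum $\xi_n$, so the initial contribution will vanish; and $\Phi_n$ is smooth with $\Phi_n'$ bounded and $\sqrt{\Phi_n'}$ bounded away from $0$, so the $\Phi$-related Kruzhkov estimates become elementary (with constants allowed to depend on the fixed $n$). As noted before the statement, each $L_2$-solution is also an entropy solution of the corresponding obstacle-free problem $\Pi(\Phi_n,f,\xi_n)$, and, by the argument of Proposition~\ref{prop:star property} (specialised to $\psi$ absent), it possesses the $(\star)$-property; I will use this for $u_n$.

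For $\delta>0$ I introduce the one-sided regularisation $\eta^+_\delta\in\mathcal{E}$ of $r\mapsto r^+$ given by $\eta^+_\delta(0)=(\eta^+_\delta)'(0)=0$ and $(\eta^+_\delta)''=\rho_\delta$, so that $\eta^+_\delta$ vanishes on $(-\infty,0]$, $0\le(\eta^+_\delta)'\le1$, $|\eta^+_\delta(r)-r^+|\le\delta$, $\operatorname{supp}(\eta^+_\delta)''\subset[0,\delta]$, $\int_{\mathbb{R}}|(\eta^+_\delta)''|\le2$ and $\Vert(\eta^+_\delta)''\Vert_\infty\le2\delta^{-1}$. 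With the doubled test function $\phi_{\theta,\varsigma}(t,x,s,y)=\varrho_\varsigma(x-y)\varphi(\tfrac{t+s}{2})\rho_\theta(t-s)$ for non-negative $\varphi\in C_c^\infty((0,T))$, I apply the entropy inequality for $u_n$ with $\bigl(\eta^+_\delta(\cdot-z),\phi_{\theta,\varsigma}(\cdot,\cdot,s,y)\bigr)$, substitute $z=\tilde u_n(s,y)$ by convolution, integrate over $(s,y)$ and take expectations; symmetrically I apply the entropy inequality for $\tilde u_n$ with $\eta(r)=\eta^+_\delta(z-r)$ and $\phi(s,y)=\phi_{\theta,\varsigma}(t,x,s,y)$, substitute $z=u_n(t,x)$, integrate over $(t,x)$ and take expectations. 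The stochastic integral coming from the $u_n$-side vanishes in expectation since its integrand is supported in $\{t>s\}$; the one from the $\tilde u_n$-side equals $\mathbb{E}\int_{t,x}H(t,x,u_n(t,x)|\tilde u_n,\phi_{\theta,\varsigma},(\eta^+_\delta)')$ and is bounded by $C\theta^{1-\mu}+\mathcal{B}(u_n,\tilde u_n|\phi_{\theta,\varsigma},(\eta^+_\delta)')$ through the $(\star)$-property of $u_n$, with $\mu<1$ as in Definition~\ref{def:star property}. Adding the two inequalities and letting $\theta\downarrow0$ via Lemma~\ref{lem:theta to 0}, one reaches an inequality of the form $-\mathbb{E}\int_{t,x,y}\eta^+_\delta(u_n-\tilde u_n)\partial_t\phi_\varsigma\le N_1'+\sum_{\ell=2}^{7}N_\ell$, where $N_2,\dots,N_7$ are the exact analogues of the quantities in the proof of Proposition~\ref{prop:uniqueentropy} with $\eta_\delta$ replaced by $\eta^+_\delta$, and $N_1'=\mathbb{E}\int_{t,x,y}(\eta^+_\delta)'(u_n-\tilde u_n)\bigl(f(t,x,u_n)-\tilde f(t,y,\tilde u_n)\bigr)\phi_\varsigma$.

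The new point is $N_1'$. Writing $f(t,x,u_n)-\tilde f(t,y,\tilde u_n)=[f(t,x,u_n)-\tilde f(t,x,u_n)]+[\tilde f(t,x,u_n)-\tilde f(t,y,u_n)]+[\tilde f(t,y,u_n)-\tilde f(t,y,\tilde u_n)]$, the first bracket is non-positive by hypothesis and is multiplied by $(\eta^+_\delta)'\ge0$, hence discarded; the second is bounded by $K\varsigma^\kappa$ on $\{\varrho_\varsigma(x-y)\ne0\}$ by the Hölder bound on $\tilde f$; the third is bounded by $K|u_n-\tilde u_n|$ and, since $(\eta^+_\delta)'(u_n-\tilde u_n)$ vanishes off $\{u_n>\tilde u_n\}$, contributes a term $\le C\,\mathbb{E}\int_{t,x,y}(u_n-\tilde u_n)^+\varrho_\varsigma(x-y)\varphi(t)$ up to $O(\delta)$. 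The terms $N_2,\dots,N_7$ are handled verbatim as in the proof of Proposition~\ref{prop:uniqueentropy} and in \cite{dareiotis2020nonlinear,du2024entropy} — those estimates invoke only the sign of $(\eta^+_\delta)''$ and the bounds on $\operatorname{supp}(\eta^+_\delta)''$, $\int|(\eta^+_\delta)''|$ and $\Vert(\eta^+_\delta)''\Vert_\infty$ recorded above — and, using the boundedness of $\Phi_n'$ and $\sqrt{\Phi_n'}$, yield a bound $C(n)\bigl(\varsigma+\delta\varsigma^{-1}+\varsigma^2\delta^{-1}+\varsigma^{-2}\delta^{2\alpha}\bigr)\mathbb{E}\bigl[1+\Vert u_n\Vert_{L_{m+1}(Q_T)}^{m+1}+\Vert\tilde u_n\Vert_{L_{m+1}(Q_T)}^{m+1}\bigr]$ together with a term $C(n)\,\mathbb{E}\int_{t,x,y}(u_n-\tilde u_n)^+\bigl(\varrho_\varsigma+\varsigma|\nabla\varrho_\varsigma|+\varsigma^2|\nabla^2\varrho_\varsigma|\bigr)(x-y)\varphi(t)$.

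Choosing $\delta$ as an appropriate power of $\varsigma$ so that the first factor tends to $0$, then letting $\varsigma\downarrow0$ and using continuity of translations in $L_1$ together with $|\eta^+_\delta(r)-r^+|\le\delta$, I obtain $-\mathbb{E}\int_{t,x}(u_n-\tilde u_n)^+\partial_t\varphi\le C\,\mathbb{E}\int_{t,x}(u_n-\tilde u_n)^+\varphi$ for every non-negative $\varphi\in C_c^\infty((0,T))$. Running the Lebesgue-point and Grönwall argument from the end of the proof of Proposition~\ref{prop:uniqueentropy}, and using Lemma~\ref{lem:appro initial} to send the lower endpoint to $0$ — where the initial term vanishes since $u_n(0,\cdot)=\tilde u_n(0,\cdot)=\xi_n$, so $(u_n(0)-\tilde u_n(0))^+=0$ — forces $\mathbb{E}\int_x(u_n(\tau,x)-\tilde u_n(\tau,x))^+=0$ for a.e.\ $\tau\in(0,T)$, i.e.\ $u_n\le\tilde u_n$ almost surely on $\Omega\times Q_T$. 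The only substantive obstacle, as in Proposition~\ref{prop:uniqueentropy}, is the control of the stochastic double-integral; it is supplied here for free by the $(\star)$-property of the $L_2$-solution $u_n$. Verifying that passing from the symmetric $\eta_\delta$ to the one-sided $\eta^+_\delta$ does not spoil the Kruzhkov estimates for the second-order and noise terms is routine, since those estimates use only the properties of $(\eta^+_\delta)''$ listed above.
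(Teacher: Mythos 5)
Your proposal is correct and follows essentially the paper's argument: doubling of variables with the one-sided regularization $(\eta^+_\delta)''=\rho_\delta$, the $(\star)$-property of $u_n$ to control the stochastic double integral, discarding the non-positive bracket produced by $f\le\tilde f$, and closing with the Lebesgue-point/Gr\"onwall step. The only nit is that the function fed into the $(\star)$-property from the $\tilde u_n$-side, coming from $\eta(r)=\eta^+_\delta(z-r)$, is $h(r)=-(\eta^+_\delta)'(-r)$ rather than $(\eta^+_\delta)'$ itself, as the paper records.
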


\begin{proof}
The proof is similar to the ones of Proposition \ref{prop:uniqueentropy}.
The different lies in the selection of $\eta_{\delta}$. For each
$\delta>0$, we define the function $\eta_{\delta}\in C^{2}(\mathbb{R})$
by
\[
\eta_{\delta}(0)=\eta_{\delta}^{\prime}(0)=0,\quad\eta_{\delta}^{\prime\prime}(r)=\rho_{\delta}(r).
\]
Thus, we have
\[
|\eta_{\delta}(r)-r^{+}|\leq\delta,\quad\mathrm{supp}\,\eta_{\delta}^{\prime\prime}\subset[0,\delta],\quad\int_{\mathbb{R}}|\eta_{\delta}^{\prime\prime}(r)|\mathrm{d}r\leq2,\quad|\eta_{\delta}^{\prime\prime}|\leq2\delta^{-1}.
\]
Then, fix $(z,s,y)\in\mathbb{R}\times Q_{T}$. We use the entropy
inequality of $u_{n}$ with $\eta(r):=\eta_{\delta}(r-z)$ and $\phi(t,x):=\phi_{\theta,\varsigma}(t,x,s,y)$.
By taking $z=\tilde{u}_{n}(s,y)$, integrating over $(s,y)\in Q_{T}$, and taking expectations and $\theta\downarrow0$, we have the
estimate from the aspect of $u_{n}$. Note that the stochastic integral
is zero as the one in (\ref{eq:entropy doubling}).

Similarly, for each $(z,t,x)\in\mathbb{R}\times Q_{T}$, we apply
the entropy inequality of $\tilde{u}_{n}$ with $\eta(r):=\eta_{\delta}(z-r)$
and $\phi(s,y):=\phi_{\theta,\varsigma}(t,x,s,y)$. Then, substituting
$z=u_{n}(t,x)$, integrating over $(t,x)\in Q_{T}$ and taking expectations,
with the $(\star)$-property of $u_{n}$ where $h(r):=-\eta_{\delta}^{\prime}(-r)$
and $g(x,y):=\varrho_{\varsigma}(x-y)$, we finally take $\theta\downarrow0$
to obtain the estimate from the aspect of $\tilde{u}_{n}$. Adding
these two estimates together, we have
\begin{equation}
-\mathbb{E}\int_{t,x,y}\eta_{\delta}(u_{n}-\tilde{u}_{n})\partial_{t}\phi_{\varsigma}\leq\sum_{l=1}^{7}N_{l},\label{eq:inequalitywitht-1}
\end{equation}
where
\begin{align*}
N_{1} & :=\mathbb{E}\int_{t,x,y}\eta_{\delta}^{\prime}(u_{n}-\tilde{u}_{n})\big(f(t,x,u_{n})-\tilde{f}(t,y,\tilde{u}_{n})\big)\phi_{\varsigma},\\
N_{2} & :=\mathbb{E}\int_{t,x,y}\llbracket\Phi_{n}^{\prime}\eta_{\delta}^{\prime}(\cdot-\tilde{u}_{n})\rrbracket(u_{n})\Delta_{x}\phi_{\varsigma}-\mathbb{E}\int_{t,x,s,y}\llbracket\Phi_{n}^{\prime}\eta_{\delta}^{\prime}(u_{n}-\cdot)\rrbracket(\tilde{u}_{n})\Delta_{y}\phi_{\varsigma},\\
N_{3} & :=\mathbb{E}\int_{t,x,y}\llbracket a^{ij}\eta_{\delta}^{\prime}(\cdot-\tilde{u}_{n})\rrbracket(x,u_{n})\partial_{x_{i}x_{j}}\phi_{\varsigma}-\mathbb{E}\int_{t,x,y}\llbracket a^{ij}\eta_{\delta}^{\prime}(u_{n}-\cdot)\rrbracket(y,\tilde{u}_{n})\partial_{y_{i}y_{j}}\phi_{\varsigma}\\
 & \quad+\mathbb{E}\int_{t,x,y}\llbracket a_{x_{j}}^{ij}\eta_{\delta}^{\prime}(\cdot-\tilde{u}_{n})\rrbracket(x,u_{n})\partial_{x_{i}}\phi_{\varsigma}-\mathbb{E}\int_{t,x,y}\llbracket a_{y_{j}}^{ij}\eta_{\delta}^{\prime}(u_{n}-\cdot)\rrbracket(y,\tilde{u}_{n})\partial_{y_{i}}\phi_{\varsigma}\\
 & \quad-2\mathbb{E}\int_{t,x,y}\eta_{\delta}^{\prime}(u_{n}-\tilde{u}_{n})b^{i}(x,u_{n})\partial_{x_{i}}\phi_{\varsigma}+2\mathbb{E}\int_{t,x,y}\eta_{\delta}^{\prime}(u_{n}-\tilde{u}_{n})b^{i}(y,\tilde{u}_{n})\partial_{y_{i}}\phi_{\varsigma},\\
N_{4} & :=\mathbb{E}\int_{t,x,y}\llbracket b_{r}^{i}\eta_{\delta}^{\prime}(\cdot-\tilde{u}_{n})\rrbracket(x,u_{n})\partial_{x_{i}}\phi_{\varsigma}-\mathbb{E}\int_{t,x,y}\llbracket b_{r}^{i}\eta_{\delta}^{\prime}(u_{n}-\cdot)\rrbracket(y,\tilde{u}_{n})\partial_{y_{i}}\phi_{\varsigma}\\
 & \quad-\mathbb{E}\int_{t,x,y}\eta_{\delta}^{\prime}(u_{n}-\tilde{u}_{n})b_{x_{i}}^{i}(x,u_{n})\phi_{\varsigma}+\mathbb{E}\int_{t,x,y}\eta_{\delta}^{\prime}(u_{n}-\tilde{u}_{n})b_{y_{i}}^{i}(y,\tilde{u}_{n})\phi_{\varsigma}\\
 & \quad+\mathbb{E}\int_{t,x,y}\llbracket b_{rx_{i}}^{i}\eta_{\delta}^{\prime}(\cdot-\tilde{u}_{n})\rrbracket(x,u_{n})\phi_{\varsigma}-\mathbb{E}\int_{t,x,y}\llbracket b_{ry_{i}}^{i}\eta_{\delta}^{\prime}(u_{n}-\cdot)\rrbracket(y,\tilde{u}_{n})\phi_{\varsigma},\\
N_{5} & :=\mathbb{E}\int_{t,x,y}\frac{1}{2}\eta_{\delta}^{\prime\prime}(u_{n}-\tilde{u}_{n})\sum_{k=1}^{\infty}\Big(|\sigma_{x_{i}}^{ik}(x,u_{n})|^{2}+|\sigma_{y_{i}}^{ik}(y,\tilde{u}_{n})|^{2}\Big)\phi_{\varsigma},\\
N_{6} & :=\mathbb{E}\int_{t,x,y}-\eta_{\delta}^{\prime\prime}(u_{n}-\tilde{u}_{n})\Big(|\nabla_{x}\llbracket\sqrt{\Phi_{n}^{\prime}}\rrbracket(u_{n})|^{2}+|\nabla_{y}\llbracket\sqrt{\Phi_{n}^{\prime}}\rrbracket(\tilde{u}_{n})|^{2}\Big)\phi_{\varsigma},\\
N_{7} & :=-\mathbb{E}\int_{t,x,y}\partial_{y_{i}x_{j}}\phi_{\varsigma}\int_{\tilde{u}_{n}}^{u_{n}}\int_{r}^{\tilde{u}_{n}}\eta_{\delta}^{\prime\prime}(r-\tilde{r})\sigma_{r}^{jk}(x,r)\sigma_{r}^{ik}(y,\tilde{r})\mathrm{d}\tilde{r}\mathrm{d}r\\
 & \quad-\mathbb{E}\int_{t,x,y}\partial_{y_{i}}\phi_{\varsigma}\int_{\tilde{u}_{n}}^{u_{n}}\int_{r}^{\tilde{u}_{n}}\eta_{\delta}^{\prime\prime}(r-\tilde{r})\sigma_{rx_{j}}^{jk}(x,r)\sigma_{r}^{ik}(y,\tilde{r})\mathrm{d}\tilde{r}\mathrm{d}r\\
 & \quad+\mathbb{E}\int_{t,x,y}\partial_{y_{i}}\phi_{\varsigma}\int_{u_{n}}^{\tilde{u}_{n}}\eta_{\delta}^{\prime\prime}(u_{n}-\tilde{r})\sigma_{x_{j}}^{jk}(x,u_{n})\sigma_{r}^{ik}(y,\tilde{r})\mathrm{d}\tilde{r}\\
 & \quad-\mathbb{E}\int_{t,x,y}\partial_{x_{j}}\phi_{\varsigma}\int_{\tilde{u}_{n}}^{u_{n}}\int_{r}^{\tilde{u}_{n}}\eta_{\delta}^{\prime\prime}(r-\tilde{r})\sigma_{r}^{jk}(x,r)\sigma_{ry_{i}}^{ik}(y,\tilde{r})\mathrm{d}\tilde{r}\mathrm{d}r\\
 & \quad-\mathbb{E}\int_{t,x,y}\phi_{\varsigma}\int_{\tilde{u}_{n}}^{u_{n}}\int_{r}^{\tilde{u}_{n}}\eta_{\delta}^{\prime\prime}(r-\tilde{r})\sigma_{rx_{j}}^{jk}(x,r)\sigma_{ry_{i}}^{ik}(y,\tilde{r})\mathrm{d}\tilde{r}\mathrm{d}r\\
 & \quad+\mathbb{E}\int_{t,x,y}\phi_{\varsigma}\int_{u_{n}}^{\tilde{u}_{n}}\eta_{\delta}^{\prime\prime}(u_{n}-\tilde{r})\sigma_{x_{j}}^{jk}(x,u_{n})\sigma_{ry_{i}}^{ik}(y,\tilde{r})\mathrm{d}\tilde{r}\\
 & \quad-\mathbb{E}\int_{t,x,y}\partial_{x_{j}}\phi_{\varsigma}\int_{\tilde{u}_{n}}^{u_{n}}\eta_{\delta}^{\prime\prime}(r-\tilde{u}_{n})\sigma_{r}^{jk}(x,r)\sigma_{y_{i}}^{ik}(y,\tilde{u}_{n})\mathrm{d}r\\
 & \quad+\mathbb{E}\int_{t,x,y}\phi_{\varsigma}\int_{\tilde{u}_{n}}^{u_{n}}\eta_{\delta}^{\prime\prime}(r-\tilde{u}_{n})\sigma_{rx_{j}}^{jk}(x,r)\sigma_{y_{i}}^{ik}(y,\tilde{u}_{n})\mathrm{d}r\\
 & \quad-\mathbb{E}\int_{t,x,y}\phi_{\varsigma}\eta_{\delta}^{\prime\prime}(u_{n}-\tilde{u}_{n})\sigma_{x_{j}}^{jk}(x,u_{n})\sigma_{y_{i}}^{ik}(y,\tilde{u}_{n}).
\end{align*}
where $u_{n}=u_{n}(t,x)$ and $\tilde{u}_{n}=\tilde{u}_{n}(t,y)$.
To estimate term $N_{1}$, using Assumption \ref{assu:assumption for sigma}
and $f\leq\tilde{f}$, we have
\begin{align*}
N_{1} & =\mathbb{E}\int_{t,x,y}\eta_{\delta}^{\prime}(u_{n}-\tilde{u}_{n})\Big(f(t,x,u_{n})-\tilde{f}(t,x,u_{n})\Big)\phi_{\varsigma}\\
 & \quad+\mathbb{E}\int_{t,x,y}\eta_{\delta}^{\prime}(u_{n}-\tilde{u}_{n})\Big(\tilde{f}(t,x,u_{n})-\tilde{f}(t,y,\tilde{u}_{n})\Big)\phi_{\varsigma}\\
 & \leq C\varsigma^{\kappa}+C\mathbb{E}\int_{t,x,y}(u_{n}-\tilde{u}_{n})^{+}\varrho_{\varsigma}(x-y)\varphi(t).
\end{align*}
Other terms can be estimated as the proof of Proposition \ref{prop:uniqueentropy}.
Then, we obtain 
\begin{align*}
 & -\mathbb{E}\int_{t,x,y}(u_{n}-\tilde{u}_{n})^{+}\varrho_{\varsigma}(x-y)\partial_{t}\varphi(t)\\
 & \leq C\big(\varsigma^{\kappa}+\delta\varsigma^{-1}+\varsigma^{-2}\delta^{2\alpha}+\varsigma^{-2}\delta^{2}+\varsigma^{2}\delta^{-1}\big)\cdot\mathbb{E}\Big[1+\Vert u_{n}\Vert_{L_{m+1}(Q_{T})}^{m+1}+\Vert\tilde{u}_{n}\Vert_{L_{m+1}(Q_{T})}^{m+1}\Big]\\
 & \quad+C\mathbb{E}\int_{t,x,y}\varphi(t)(u_{n}-\tilde{u}_{n})^{+}\Big(\varsigma^{2}\sum_{i,j}|\partial_{y_{i}x_{j}}\varrho_{\varsigma}(x-y)|+\varsigma|\partial_{x_{i}}\varrho_{\varsigma}(x-y)|+\varrho_{\varsigma}(x-y)\Big).
\end{align*}
We choose $\vartheta\in(m^{-1}\lor2^{-1},1)$ and $\alpha\in(1/(2\vartheta),1\land(m/2))$.
Let $\delta=\varsigma^{2\vartheta}$ and $\varsigma\downarrow0$.
We have $\mathfrak{C}(\varsigma,\delta)\downarrow0$. Note that
$\varsigma^{2}\sum_{i,j}|\partial_{y_{i}x_{j}}\varrho_{\varsigma}|$
and $\varsigma|\partial_{x_{i}}\varrho_{\varsigma}|$ are approximations
of the identity up to a constant. With the continuity of translations
in $L_{1}$, we have
\[
\mathbb{E}\int_{t,x}(u_{n}(\tau,x)-\tilde{u}_{n}(\tau,x))^{+}\partial_{t}\varphi(t)\leq C\mathbb{E}\int_{t,x}(u_{n}(t,x)-\tilde{u}_{n}(t,x))^{+}\varphi(t).
\]
Following the proof of Proposition \ref{prop:uniqueentropy} and using
Gr\"onwall's inequality, we have
\[
\mathbb{E}\int_{x}(u_{n}(\tau,x)-\tilde{u}_{n}(\tau,x))^{+}=0,\quad\text{a.s.}\ \tau\in[0,T],
\]
which proves this lemma. 
\end{proof}

\section{Existence\label{sec:Existence}}

We first introduce an existence result on $\Pi(\Phi,f+P_{\epsilon}(\cdot,\psi),\xi)$,
the solution of which is a almost surely limit of the subsequence
of $\{u_{n,\epsilon}\}_{n\in\mathbb{N}}$ when $n\to\infty$. Then,
taking the limit $\epsilon\downarrow0$, with the the monotonicity
of $u_{\epsilon}$ in $\epsilon$, we acquire the existence of the
entropy solution $(u,\nu)$. 
\begin{lem}
\label{lem:exist for u_=00005Cepsilon}Fix $\epsilon>0$. Let Assumption
\ref{assu:assumption for phi} hold. For any $\xi\in L_{4}(\Omega,\mathcal{F}_{0};L_{2}(\mathbb{T}^{d}))\cap L_{m+1}(\Omega,\mathcal{F}_{0};L_{m+1}(\mathbb{T}^{d}))$,  $\psi\in C_x^{\kappa}(\bar{Q}_{T})$ satisfying (\ref{eq:assumption on xi}),
and $n\in\mathbb{N}$, the problem $\Pi(\Phi_{n},f+P_{\epsilon}(\cdot,\psi),\xi_{n})$
has a unique $L_{2}$-solution $u_{n,\epsilon}$. Moreover, by taking
subsequence, we can assume that when $n\rightarrow\infty$, $u_{n,\epsilon}$
almost surely converge to $u_{\epsilon}$ which is an entropy solution
of $\Pi(\Phi,f+P_{\epsilon}(\cdot,\psi),\xi)$. The function $u_{\epsilon}$
has the $(\star)$-property and satisfies 
\begin{equation}
\begin{aligned}\mathbb{E}\sup_{t\leq T}\,\Vert u_{\epsilon}(t)\Vert_{L_{2}(\mathbb{T}^{d})}^{2}+\mathbb{E}\Vert\nabla\llbracket\sqrt{\Phi^{\prime}}\rrbracket(u_{\epsilon})\Vert_{L_{2}(Q_{T})}^{2} & +\mathbb{E}\Vert\epsilon^{-1}(u_{\epsilon}-\psi)^{-}\Vert_{L_{1}(Q_{T})}^{(m+1)/2}\\
 & \leq C\Big(1+\mathbb{E}\Vert\xi\Vert_{L_{2}(\mathbb{T}^{d})}^{m+1}\Big),
\end{aligned}
\label{eq:priori estimate1_u_=00005Cepsilon}
\end{equation}
and
\begin{align}
 & \mathbb{E}\sup_{t\leq T}\,\Vert u_{\epsilon}(t)\Vert_{L_{m+1}(\mathbb{T}^{d})}^{m+1}\leq C\Big(1+\Vert\xi\Vert_{L_{m+1}(\Omega\times\mathbb{T}^{d})}^{m+1}\Big).\label{eq:priori estimate2_u_=00005Cepsilon}
\end{align}
\end{lem}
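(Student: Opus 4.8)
The plan is to run the vanishing-viscosity plus penalization scheme of \cite{dareiotis2019entropy,du2024entropy}, treating the penalty as part of the reaction. \emph{Well-posedness of $u_{n,\epsilon}$.} For fixed $n$ and $\epsilon$ the modified reaction $\tilde f:=f+P_\epsilon(\cdot,\psi)$, $\tilde f(t,x,r)=f(t,x,r)+\epsilon^{-1}(r-\psi(t,x))^-$, satisfies Assumption~\ref{assu:assumption for sigma} with a constant depending on $\epsilon$, $K$ and $\Vert\psi\Vert_{C^\kappa_x(\bar Q_T)}$: it is globally Lipschitz in $r$ with constant $\epsilon^{-1}$, and, since $\psi\in C^\kappa_x(\bar Q_T)$, Hölder-$\kappa$ in $x$ uniformly in $(t,r)$. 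Moreover $\Phi_n$ is uniformly elliptic, $\sqrt{\Phi_n'}\ge 2/n$, by Lemma~\ref{lem:Assumptio for coef}. Hence, exactly as in \cite{dareiotis2020nonlinear,du2024entropy}, the Galerkin method together with the a priori bounds of Proposition~\ref{prop:estimate for u_n,epsilon} produces a unique $L_2$-solution $u_{n,\epsilon}$ of $\Pi(\Phi_n,\tilde f,\xi_n)$; applying It\^o's formula to $\eta(u_{n,\epsilon})$ for $\eta\in\mathcal E$ (as recalled in Section~\ref{sec:Comparison-principle}) shows $u_{n,\epsilon}$ is also an entropy solution of that problem.

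\emph{Compactness and the limit $u_\epsilon$.} By Proposition~\ref{prop:estimate for u_n,epsilon} and $|\xi_n|\le|\xi|$, the family $\{u_{n,\epsilon}\}_n$ is bounded in $L_{m+1}(\Omega_T\times\mathbb T^d)$ and in $L_{m+1}(\Omega;L_\infty([0,T];L_{m+1}))$, while $\{\nabla\Phi_n(u_{n,\epsilon})\}_n$ and $\{\nabla\llbracket\sqrt{\Phi_n'}\rrbracket(u_{n,\epsilon})\}_n$ are bounded in $L_2(\Omega_T;L_2(\mathbb T^d))$, and $\{\epsilon^{-1}(u_{n,\epsilon}-\psi)^-\}_n$ is bounded in $L_2(\Omega_T;L_2(\mathbb T^d))$. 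These spatial bounds, combined with the time regularity read off the equation (the drift lies in a fixed negative-order Sobolev space, the martingale part is controlled by Burkholder--Davis--Gundy), place us in the setting of \cite{dareiotis2019entropy,du2024entropy}: a stochastic compactness argument (Skorokhod representation, or the Gy\"ongy--Krylov criterion for convergence in probability) yields, along a subsequence, $u_{n,\epsilon}\to u_\epsilon$ almost surely on $\Omega\times Q_T$ and in $L_1(Q_T)$, together with $\nabla\Phi_n(u_{n,\epsilon})\rightharpoonup\nabla\Phi(u_\epsilon)$ and $\nabla\llbracket\sqrt{\Phi_n'}\rrbracket(u_{n,\epsilon})\rightharpoonup\nabla\llbracket\sqrt{\Phi'}\rrbracket(u_\epsilon)$ weakly in $L_2$, where $\sqrt{\Phi_n'}\to\sqrt{\Phi'}$ uniformly on compacts (Lemma~\ref{lem:Assumptio for coef}) is used to identify the limits. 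As $\epsilon$ is fixed, $P_\epsilon(\cdot,\psi)$ is just a Nemytskii operator of $u_{n,\epsilon}$, bounded in $L_2(\Omega_T)$, hence converges a.s.\ and in $L_1$ and creates no new difficulty.

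\emph{Passing to the limit in the entropy inequality and verifying the $(\star)$-property and the estimates.} Insert $u_{n,\epsilon}$ into \eqref{eq:entropy formula-1} (with $\Phi_n$, $\tilde f$, $\xi_n$, and $\nu=0$, since there is no obstacle) and let $n\to\infty$ term by term: the terms polynomial in $u$ by a.s.\ convergence and uniform integrability from the $L_{m+1}$ bound (dominated convergence); the diffusion terms $\llbracket\Phi_n'\eta'\rrbracket(u_{n,\epsilon})$, $\llbracket a^{ij}\eta'\rrbracket$, etc.\ by the uniform convergence of $\sqrt{\Phi_n'}$ and continuity; the dissipation term $-\int_{Q_T}\eta''(u_{n,\epsilon})|\nabla\llbracket\sqrt{\Phi_n'}\rrbracket(u_{n,\epsilon})|^2\phi$, which appears on the right-hand side with $\eta''\ge0$, $\phi\ge0$, by weak lower semicontinuity of the $L_2$-norm applied to $\sqrt{\eta''(u_{n,\epsilon})\phi}\,\nabla\llbracket\sqrt{\Phi_n'}\rrbracket(u_{n,\epsilon})$ (whose weak limit is $\sqrt{\eta''(u_\epsilon)\phi}\,\nabla\llbracket\sqrt{\Phi'}\rrbracket(u_\epsilon)$, using a.s.\ convergence of $\eta''(u_{n,\epsilon})$); the stochastic integral by the It\^o isometry, its integrand converging in $L_2(\Omega_T;L_2)$; and the penalty term $\epsilon^{-1}(u_{n,\epsilon}-\psi)^-$ like the $f$ term. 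This gives the entropy inequality for $u_\epsilon$ as entropy solution of $\Pi(\Phi,\tilde f,\xi)$. Since each $u_{n,\epsilon}$ has the $(\star)$-property uniformly in $n$ (because $\xi\in L_4(\Omega;L_2)$) by Proposition~\ref{prop:star property}, and $\{u_{n,\epsilon}\}_n$ is bounded in $L_{m+1}(\Omega_T\times\mathbb T^d)$ and converges a.s.\ to $u_\epsilon$, Lemma~\ref{lem:appro for star} shows $u_\epsilon$ has the $(\star)$-property. Finally, \eqref{eq:priori estimate1_u_=00005Cepsilon}--\eqref{eq:priori estimate2_u_=00005Cepsilon} follow by taking $p=m+1$ in \eqref{eq:priori estimate1}--\eqref{eq:priori estimate2} and letting $n\to\infty$: Fatou's lemma handles $\mathbb E\sup_t\Vert u_\epsilon(t)\Vert_{L_2}^2$, $\mathbb E\sup_t\Vert u_\epsilon(t)\Vert_{L_{m+1}}^{m+1}$ and $\mathbb E\Vert\epsilon^{-1}(u_\epsilon-\psi)^-\Vert_{L_1(Q_T)}^{(m+1)/2}$, weak lower semicontinuity handles $\mathbb E\Vert\nabla\llbracket\sqrt{\Phi'}\rrbracket(u_\epsilon)\Vert_{L_2(Q_T)}^2$, and $\Vert\xi_n\Vert_{L_2}\le\Vert\xi\Vert_{L_2}$ controls the right-hand side.

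\emph{Main obstacle.} The crux is the compactness step: extracting an a.s.\ convergent subsequence and, across the degeneracy that vanishes as $n\to\infty$ but is genuine for each $n$, identifying $\lim_n\Phi_n(u_{n,\epsilon})=\Phi(u_\epsilon)$ and the weak limits of the gradients. This technical heart is carried out essentially as in \cite{dareiotis2019entropy,du2024entropy}; the only additional point requiring care is the correct-sign weak lower semicontinuity of the entropy-dissipation term in the limit passage.
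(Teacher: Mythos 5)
Your proposal is correct and follows essentially the route the paper itself sketches (the paper explicitly omits the detailed proof, citing the Galerkin construction and limit passage of \cite{dareiotis2020nonlinear,du2024entropy,liu2024obstacle}, the Hölder/Lipschitz treatment of $f+P_\epsilon(\cdot,\psi)$, the a~priori bounds of Proposition~\ref{prop:estimate for u_n,epsilon} together with a.s.\ convergence and uniform integrability, and Proposition~\ref{prop:star property} with Lemma~\ref{lem:appro for star} for the $(\star)$-property). The only nitpick is the final line: to get the $L_2$-squared terms on the left of \eqref{eq:priori estimate1_u_=00005Cepsilon} you should invoke Proposition~\ref{prop:estimate for u_n,epsilon} with $p=2$ (or note $a^2\le 1+a^{m+1}$) rather than $p=m+1$ directly, and for the a.s.\ convergence on the original space Gy\"ongy--Krylov (rather than Skorokhod, which changes the space) is the appropriate device once uniqueness of the limit is known.
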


The proof of existence in Lemma \ref{lem:exist for u_=00005Cepsilon}
follows a standard procedure, wherein we prove the existence of the
$L_{2}$-solution $u_{n,\epsilon}$ using the Galerkin method, and
then taking $n\to\infty$ (cf. \cite{dareiotis2020nonlinear,du2024entropy,liu2024obstacle}).
Further treatment of the inhomogeneous function $f+P_{\epsilon}(\cdot,\psi)$
is required in our case, and this is easily proved due to the H\"older
continuity in $x$ and Lipschitz continuity in $r$. Moreover,
a priori estimates (\ref{eq:priori estimate1_u_=00005Cepsilon}) and
(\ref{eq:priori estimate2_u_=00005Cepsilon}) result from Proposition
\ref{prop:estimate for u_n,epsilon}, along with the almost sure convergence
of $u_{n,\epsilon}$ and uniform integrability of $\{|u_{n,\epsilon}|^{q}\}_{n\in\mathbb{N}}$
for any $0<q<m+1$. The $(\star)$-property of $u_{\epsilon}$ can
be obtained using Proposition \ref{prop:star property} and Lemma
\ref{lem:appro for star}. Therefore, we omit the proof.
\begin{rem}
The $(\star)$-property of $u_{\epsilon}$ is uniform on $\epsilon$,
if $\xi\in L_{4}(\Omega,\mathcal{F}_{0};L_{2}(\mathbb{T}^{d}))$.

We are now in a positive to conclude the proof of Theorem \ref{thm:existence_main thm}.
\end{rem}

\begin{proof}[Proof of Theorem \ref{thm:existence_main thm}]
First, we assume additionally $\xi\in L_{4}(\Omega,\mathcal{F}_{0};L_{2}(\mathbb{T}^{d}))$.
Let $\{\epsilon_{l}\}_{i\in\mathbb{N}}$ be a monotone sequence decreasing
to $0$. Using Lemma \ref{lem:exist for u_=00005Cepsilon}, the penalized
equation $\Pi(\Phi,f+P_{\epsilon_{l}}(\cdot,\psi),\xi)$ has an entropy
solution $u_{\epsilon_{l}}$. From Proposition \ref{prop:comparison},
the functions $u_{\epsilon_{l}}$ almost surely increase as $l\rightarrow\infty$.
Set the pointwise limit to be $u$ which takes value in $[-\infty,\infty]$.
With the estimates (\ref{eq:priori estimate2_u_=00005Cepsilon}) and
Fatous lemma, we have $u\in L_{m+1}(\Omega\times Q_{T})$, which implies
$|u|<\infty$ almost sure on $\Omega\times Q_{T}$. Furthermore, this
convergence is strong in $L_{m+1}(\Omega\times Q_{T})$ based on the
dominated convergence theorem, and we have
\[
\mathbb{E}\sup_{t\leq T}\,\Vert u(t)\Vert_{L_{m+1}(\mathbb{T}^{d})}^{m+1}\leq C\Big(1+\Vert\xi\Vert_{L_{m+1}(\Omega\times\mathbb{T}^{d})}^{m+1}\Big).
\]
Furthermore, the $(\star)$-property of $u$ is a direct consequence
from Proposition \ref{prop:star property} and Lemma \ref{lem:appro for star}.

Define $\nu_{l}:=P_{\epsilon_{l}}(u_{\epsilon_{l}},\psi)$. We verify the existence and properties of $\nu$ through three steps:

\textit{Step 1}. There is a subsequence of $\{\nu_{l}\}_{l\in\mathbb{N}}$ that is weakly convergent in the space $L_{(m+1)/2}(\Omega;H_{q}^{-1}(Q_{T}))$ with some $q\in(1,1/d+1)$. 
This is actually a consequence of the following uniform estimate:
\begin{equation}\label{eq:H-1}
\sup_{l\in\mathbb{N}}\Vert\nu_{l}\Vert_{L_{(m+1)/2}(\Omega;H_{q}^{-1}(Q_{T}))}\leq C\sup_{l\in\mathbb{N}}\Vert\nu_{l}\Vert_{L_{(m+1)/2}(\Omega;L_{1}(Q_{T}))}<\infty,
\end{equation}
which follows from the Sobolev embedding and duality.
Indeed, when $p>d+1$, the
space $H_{p,0}^{1}(Q_{T})$ is continuously embedding into the space
$C_{0}^{\alpha}(\bar{Q}_{T})$, which means for all $g\in H^1_{p,0}(Q_T):=\{u\in H^1_{p}(Q_T):u(T,\cdot)=0\}$, we have
\begin{equation*}
\Vert g\Vert_{L_{\infty}(Q_{T})}\leq\Vert g\Vert_{C^{\alpha}(Q_{T})}\leq C\Vert g\Vert_{H_{p}^{1}(Q_{T})}.
\end{equation*}
Then, for all $h\in L_1(Q_T)$, we have
\begin{align*}
\Big|\int_{Q_T}g h \mathrm{d}x\mathrm{d}t\Big|\leq \Vert g\Vert_{L_\infty(Q_T)}\Vert h\Vert_{L_1(Q_T)}
\leq C\Vert g\Vert_{H^1_p(Q_T)} \Vert h\Vert_{L_1(Q_T)}.
\end{align*}
Choosing $q$ such that $1/p+1/q=1$ we have
\[
\begin{aligned}
\Vert h \Vert_{H^{-1}_q(Q_T)}\leq C \Vert h\Vert_{L_1(Q_T)}.
\end{aligned}
\]
This confirms the claim \eqref{eq:H-1}.

Let $\nu\in L_{(m+1)/2}(\Omega;H_{q}^{-1}(Q_{T}))$
be the weak limit of a subsequence of $\{\nu_{l}\}_{l\in\mathbb{N}}$ (also denoted as $\{\nu_{l}\}_{l\in\mathbb{N}}$ by abuse of notation).
Since $\nu_{l}\geq0$ as a function in the space $L_{(m+1)/2}(\Omega;L_{1}(Q_{T}))$,
we know that $\nu$ is a.s. a nonnegative distribution, and therefore it is actually a.s. a locally finite Radon measure from a well-known result \cite[Page 84]{strichartz2003guide}.

\textit{Step 2}. For each $\phi\in H^1_{p,0}(Q_T)$, there is a full probability subset $\Omega_{\phi}\subset \Omega$ such that for all $\omega\in \Omega_\phi$,
\begin{equation}\label{eq:aslimit}
\lim_{l\to\infty}\int_{Q_T}\nu_l(\omega)\phi\mathrm{d}x\mathrm{d}t=\int_{Q_T}\nu(\omega)\phi\mathrm{d}x\mathrm{d}t.
\end{equation}
To show this, using the weak convergence of $\{\nu_{l}\}_{l\in\mathbb{N}}$, we have for all $\phi\in H^1_{p,0}(Q_T)$ and $Z\in L_{(m+1)/(m-1)}(\Omega)$, 
\[
\mathbb{E}\Big[Z\int_{Q_T}\nu_{l}\phi\mathrm{d}x\mathrm{d}t\Big]\xrightarrow{l\to\infty}\mathbb{E}\Big[Z\int_{Q_T}\nu\phi\mathrm{d}x\mathrm{d}t\Big],
\]
which means $\int_{Q_T}\nu_{l}\phi\,\mathrm{d}x\mathrm{d}t$ converges to $\int_{Q_T}\nu\phi\,\mathrm{d}x\mathrm{d}t$ weakly in $L_{(m+1)/2}(\Omega)$.

On the other hand, recalling that $\nu_{l}=P_{\epsilon_{l}}(u_{\epsilon_{l}},\psi)$, and $u_{\epsilon}$ is the entropy solution (so it is also a weak solution)
of $\Pi(\Phi,f+P_{\epsilon}(\cdot,\psi),\xi)$, one has that for all $\phi\in H^1_{p,0}(Q_T)$,
\begin{equation}
\begin{aligned} & -\int_{Q_T}\nu_{l} \phi \mathrm{d}x\mathrm{d}t\\
 & =\int_{Q_T}u_{\epsilon_{l}}\partial_{t}\phi\mathrm{d}x\mathrm{d}t+\int_{\mathbb{T}^{d}}\xi\phi(0)\mathrm{d}x+\int_{Q_T}\Phi(u_{\epsilon_{l}})\Delta\phi\mathrm{d}x\mathrm{d}t\\
 & \quad+\int_{Q_T}\llbracket a^{ij}\rrbracket(x,u_{\epsilon_{l}})\partial_{x_{i}x_{j}}\phi\mathrm{d}x\mathrm{d}t\\
 & \quad+\int_{Q_T}\Big(\llbracket a_{x_{j}}^{ij}+b_{r}^{i}\rrbracket(x,u_{\epsilon_{l}})-2b^{i}(x,u_{\epsilon_{l}})\Big)\partial_{x_{i}}\phi\mathrm{d}x\mathrm{d}t\\
 & \quad+\int_{Q_T}\Big(-b_{x_{i}}^{i}(x,u_{\epsilon_{l}})+\llbracket b_{rx_{i}}^{i}\rrbracket(x,u_{\epsilon_{l}})+f(t,x,u_{\epsilon_{l}})\Big)\phi\mathrm{d}x\mathrm{d}t\\
 & \quad+\int_{Q_T}\Big(\phi\sigma_{x_{i}}^{ik}(x,u_{\epsilon_{l}})-\llbracket\sigma_{rx_{i}}^{ik}\rrbracket(x,u_{\epsilon_{l}})\phi-\llbracket\sigma_{r}^{ik}\rrbracket(x,u_{\epsilon_{l}})\partial_{x_{i}}\phi\Big)\mathrm{d}x\mathrm{d}W_{t}^{k}.
\end{aligned}
\label{eq:entropy_equality}
\end{equation}
Thanks to the almost sure convergence of $u_{\epsilon_l}$ (to $u$), 
the right-hand side of (\ref{eq:entropy_equality}) is convergent almost surely.
Therefore, $\int_{Q_T}\nu_{l}\phi\,\mathrm{d}x\mathrm{d}t$ is convergent almost surely, and the limit must coincide with its weak limit $\int_{Q_T}\nu\phi\mathrm{d}x\mathrm{d}t$.

\textit{Step 3}.  We show that $\nu$ is almost surely a finite Radon measure on $Q_T$. Specifically, there exists a random variable $\beta\in L_{(m+1)/2}(\Omega)$ such that
\begin{equation}\label{eq:beta}
\Big|\int_{Q_T}\nu\phi\mathrm{d}x\mathrm{d}t\Big|\leq \beta\Vert\phi\Vert_{C({Q}_T)}\quad \text{a.s.}
\end{equation}
for any $\phi\in C(\bar{Q}_T)$ with $\phi(T,\cdot)=0$. This verifies the condition (ii) in Definition \ref{def:entropy solution with ob}.

Since we do not require $\phi(0,\cdot)=0$, we need to do an even continuation of $\nu$ and $\nu_l$:
define $\tilde{\nu}_l(t,\cdot):=\nu_l(|t|,\cdot)$ and
$\tilde{\nu}(t,\cdot):=\nu(|t|,\cdot)$ for $t\in(-T,T)$, 
and $\tilde{Q}_T:=(-T,T)\times\mathbb{T}^d$. 

Let $\{\phi_{k'}\}_{k'\in\mathbb{N}}\subset H^1_{p,0}(\tilde{Q}_T)$ be a countable dense subset of $C_0(\tilde{Q}_T):=\{u\in C(\tilde{Q}_T):u(T,\cdot)=u(-T,\cdot)=0\}$. 
Define the nonnegative random variables $\beta_l:=\tilde{\nu}_l(\tilde{Q}_T)\in L_{(m+1)/2}(\Omega)$. Since $\Vert\beta_l\Vert_{L_{(m+1)/2}(\Omega)}$ is uniformly bounded for $l\in\mathbb{N}$, we know that there is a subsequene of $\beta_l$ (still denoted by itself) weakly converging to a $\beta\in L_{(m+1)/2}(\Omega)$ as $l\to\infty$. 
It follows from Step 2 that there is a full probability subset $\Omega_1\subset \Omega$ such that
\[
\Big|\int_{Q_T}\nu_l(\omega)\phi_{k'}\mathrm{d}x\mathrm{d}t\Big|\leq \beta_l(\omega)\Vert\phi_{k'} \Vert_{C({Q}_T)}
\]
for all  $\phi_{k'}$ and $\omega\in\Omega_1$. 
This along with the Banach--Saks theorem yields \eqref{eq:beta}.
\medskip

Finally, we verify that $(u,\nu)$ is an entropy solution to the obstacle
problem $\Pi_{\psi}(\Phi,f,\xi)$. 
Actually, it remains to verify the condition (iii) in Definition \ref{def:entropy solution with ob}.
Recall the test functions $\eta$ and $\phi$ in Definition \ref{def:entropy solution with ob}. 
From the definition of the entropy
solution $u_{\epsilon_{l}}$, for all $B\in\mathcal{F}$, we have 
\begin{equation}
\begin{aligned} & -\mathbb{E}\bigg[\mathbf{1}_{B}\bigg(\int_{Q_T}\eta(u_{\epsilon_l})\partial_{t}\phi\mathrm{d}x\mathrm{d}t+\int_{Q_T}\eta^{\prime}(\psi)\nu_{\epsilon_l}\phi\mathrm{d}x\mathrm{d}t\bigg)\bigg]\\
 & \leq\mathbb{E}\Bigg[\mathbf{1}_{B}\bigg[\int_{\mathbb{T}^{d}}\eta(\xi)\phi(0)\mathrm{d}x+\int_{Q_T}\llbracket\Phi^{\prime}\eta^{\prime}\rrbracket(u_{\epsilon_l})\Delta\phi\mathrm{d}x\mathrm{d}t\\
 & \quad+\int_{Q_T}\llbracket a^{ij}\eta^{\prime}\rrbracket(u_{\epsilon_l})\partial_{x_{i}x_{j}}\phi\mathrm{d}x\mathrm{d}t\\
 & \quad+\int_{Q_T}\Big(\llbracket a_{x_{j}}^{ij}\eta^{\prime}+b_{r}^{i}\eta^{\prime}\rrbracket(x,u_{\epsilon_l})-2\eta^{\prime}(u_{\epsilon_l})b^{i}(x,u_{\epsilon_l})\Big)\partial_{x_{i}}\phi\mathrm{d}x\mathrm{d}t\\
 & \quad+\int_{Q_T}\Big(-\eta^{\prime}(u)b_{x_{i}}^{i}(x,u_{\epsilon_l})+\llbracket b_{rx_{i}}^{i}\eta^{\prime}\rrbracket(x,u_{\epsilon_l})+\eta^{\prime}(u_{\epsilon_l})f(t,x,u_{\epsilon_l})\Big)\phi\mathrm{d}x\mathrm{d}t\\
 & \quad+\int_{Q_T}\Big(\frac{1}{2}\eta^{\prime\prime}(u_{\epsilon_l})\sum_{k=1}^{\infty}|\sigma_{x_{i}}^{ik}(x,u_{\epsilon_l})|^{2}-\eta^{\prime\prime}(u_{\epsilon_l})|\nabla\llbracket\sqrt{\Phi^{\prime}}\rrbracket(u_{\epsilon_l})|^{2}\Big)\phi\mathrm{d}x\mathrm{d}t\\
 & \quad+\int_{Q_T}\Big(\eta^{\prime}(u_{\epsilon_l})\phi\sigma_{x_{i}}^{ik}(x,u_{\epsilon_l})-\llbracket\sigma_{rx_{i}}^{ik}\eta^{\prime}\rrbracket(x,u_{\epsilon_l})\phi-\llbracket\sigma_{r}^{ik}\eta^{\prime}\rrbracket(x,u_{\epsilon_l})\partial_{x_{i}}\phi\Big)\mathrm{d}x\mathrm{d}W_{t}^{k}\bigg]\Bigg].
\end{aligned}
\label{eq:Itofornepsilon}
\end{equation}
For any $\eta^{\prime\prime}\in C(\mathbb{R})$, using
\[
|\llbracket\sqrt{\Phi^{\prime}}\eta^{\prime\prime}\rrbracket(r)|\leq C\Vert \eta^{\prime\prime}\Vert_{L_{\infty}}|r|^{(m+1)/2},\quad\forall r\in\mathbb{R},
\]
Proposition \ref{prop:estimate for u_n,epsilon}, and $\partial_{x_{i}}\llbracket\sqrt{\Phi^{\prime}}\eta^{\prime\prime}\rrbracket(u_{\epsilon_l})=\eta^{\prime\prime}(u_{\epsilon_l})\partial_{x_{i}}\llbracket\sqrt{\Phi^{\prime}}\rrbracket(u_{\epsilon_l})$, we have
\[
\sup_{\epsilon_l}\,\mathbb{E}\int_{t}\Vert\llbracket\sqrt{\Phi^{\prime}}\eta^{\prime\prime}\rrbracket(u_{\epsilon_l})\Vert_{H^{1}(\mathbb{T}^{d})}^{2}<\infty.
\]
By taking a non-relabelled subsequence, we have that $\llbracket\sqrt{\Phi^{\prime}}\eta^{\prime\prime}\rrbracket(u_{\epsilon_l})$
converges weakly to some $v$ in $L_{2}(\Omega_{T};H^{1}(\mathbb{T}^{d}))$.
Then, with the pointwise convergence and uniform integrability of
$u_{\epsilon_l}$, we have $v=\llbracket\sqrt{\Phi^{\prime}}\eta^{\prime\prime}\rrbracket(u)$, which implies
\begin{align*}
 & \mathbb{E}\Bigg[\mathbf{1}_{B}\int_{Q_T}\eta^{\prime\prime}(u)|\nabla\llbracket\sqrt{\Phi^{\prime}}\rrbracket(u)|^{2}\phi\mathrm{d}x\mathrm{d}t\Bigg]\\
 & \leq\liminf_{l\rightarrow\infty}\mathbb{E}\Bigg[\mathbf{1}_{B}\int_{Q_T}\eta^{\prime\prime}(u_{\epsilon_l})|\nabla\llbracket\sqrt{\Phi^{\prime}}\rrbracket(u_{\epsilon_l})|^{2}\phi\mathrm{d}x\mathrm{d}t\Bigg].
\end{align*}
Therefore, taking inferior limit on (\ref{eq:Itofornepsilon}) and
using Assumptions \ref{assu:assumption for phi}-\ref{assu:assumption for sigma}
and the convergence of $u_{\epsilon_l}$, we acquire the condition (iii) in Definition \ref{def:entropy solution with ob}. 

For general $\xi\in L_{m+1}(\Omega\times\mathbb{T}^{d})$, we define
$\xi_{n}:=(-n)\lor(\xi\land n)$. From above, the problem $\Pi_{\psi}(\Phi,f,\xi_{n})$
has an entropy solution $(u_{n},\nu_{n})$, and $u_{n}$ has the $(\star)$-property.
Using Proposition \ref{prop:uniqueentropy}, we have
\[
\underset{t\in[0,T]}{\mathrm{ess\ sup}}\,\Vert u_{n}(t,\cdot)-u_{\tilde{n}}(t,\cdot)\Vert_{L_{1}(\Omega\times\mathbb{T}^{d})}\leq C\Vert\xi_{n}-\xi_{\tilde{n}}\Vert_{L_{1}(\Omega\times\mathbb{T}^{d})}.
\]
As the above proof, when $n,\tilde{n}\to\infty$, by taking subsequence, the
solution $(u_{n},\nu_{n})$ has an limit $(u,\nu)$, which is an entropy
solution to $\Pi_{\psi}(\Phi,f,\xi)$. This complete the proof.
\end{proof}

\section{The porous medium equation\label{sec:Deterministic}}
As an application, our result can apply to the obstacle problem of deterministic porous medium equations.
Let us consider
\begin{equation}
\begin{aligned} \label{eq:deteministic PME}
\partial_t u&=\Delta(|u|^{m-1}u)+f(t,x,u)+\nu,\\
u&\geq \psi,\quad
u(0,\cdot)=\xi,
\end{aligned}
\end{equation}
where $m>1$.
This obstacle problem without the reaction term $f$ has been studied in the framework of variational inequality by many works such as \cite{alt1983quasilinear,kinnunen2008definition,bogelein2015obstacle}, 
which mainly focus on lower obstacles and non-negative solutions.
The uniqueness of solutions are usually absent in those works.

The entropy formulation of the problem \eqref{eq:deteministic PME} is the following.
\begin{defn}\label{def:entropy solution of PME}
An entropy solution to \eqref{eq:deteministic PME} is a pair $(u,\nu)$ such that
\begin{itemize}
\item[(i)]  $u\ge\psi$
for almost all $(t,x)\in Q_{T}$ and
\[
u\in L_{m+1}(Q_T),\quad|u|^{\frac{m-1}{2}}u\in L_{2}(0,T;H^{1}(\mathbb{T}^{d}));
\]
\item[(ii)] $\nu$ is a finite Radon measure on $Q_{T}$;
\item[(iii)] for all $(\eta,\varphi,\varrho)\in\mathcal{E}\times C_{c}^{\infty}[0,T)\times C^{\infty}(\mathbb{T}^{d})$
and $\phi:=\varphi\varrho\geq0$, it holds almost surely that
\begin{equation}
\begin{aligned}  &- \int_{Q_{T}}\eta(u)\partial_{t}\phi\,\mathrm{d}x\mathrm{d}t-\int_{Q_{T}}\eta^{\prime}(\psi)\phi\nu(\mathrm{d}x\mathrm{d}t)\\
 & \leq\int_{\mathbb{T}^{d}}\eta(\xi)\phi(0)\mathrm{d}x+m\int_{Q_{T}}\bigg(\int_0^{u(t,x)} |r|^{m-1}\eta^\prime(r)\mathrm{d}r\bigg)\Delta\phi\,\mathrm{d}x\mathrm{d}t\\
 & \quad-\frac{4m}{(m+1)^2}\int_{Q_{T}}\eta^{\prime\prime}(u)\big|\nabla(|u|^{\frac{m-1}{2}}u)\big|^{2}\phi\,\mathrm{d}x\mathrm{d}t
 +\int_{Q_{T}}\eta^{\prime}(u)f(t,x,u)\phi\,\mathrm{d}x\mathrm{d}t.
\end{aligned}
\label{eq:entropy formula for deterministic PME}
\end{equation}
\end{itemize}
\end{defn}

In view of Theorems \ref{thm:existence_main thm} and \ref{thm:unque_main thm}, we have the following well-posedness result.
\begin{thm}\label{thm:well-posedness of PME}
Given $\psi\in C^{\kappa}_x(\bar{Q}_{T})$ and $f\in C(\bar{Q}_{T}\times\mathbb{R})$ satisfying \eqref{eq:assumption for f},
there exists an entropy solution $(u,\nu)$ to \eqref{eq:deteministic PME}
for any initial data $\xi\in L_{m+1}(\mathbb{T}^{d})$ with $\xi\ge\psi(0,\cdot)$.
If $\psi\in C^{2}_x(\bar{Q}_T)$ in addition, and $(\tilde{u},\tilde{\nu})$
is the entropy solution to \eqref{eq:deteministic PME} with the initial data $\tilde{\xi}\in L_{m+1}(\mathbb{T}^{d})$, then
\[
\underset{t\in[0,T]}{\mathrm{ess\ sup}}\,\Vert u(t)-\tilde{u}(t)\Vert_{L_{1}(\mathbb{T}^{d})}\leq C\Vert\xi-\tilde{\xi}\Vert_{L_{1}(\mathbb{T}^{d})},
\]
where the constant $C$ depends only on $K$, $d$, and $T$. 
\end{thm}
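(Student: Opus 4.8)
The plan is to obtain Theorem~\ref{thm:well-posedness of PME} as a specialisation of Theorems~\ref{thm:existence_main thm} and~\ref{thm:unque_main thm}, choosing $\Phi(r)=|r|^{m-1}r$ and switching off the noise, $\sigma^{k}\equiv 0$ for all $k$. With these choices $a^{ij}\equiv b^{i}\equiv 0$, the It\^o form \eqref{eq:ito} becomes $\mathrm{d}u=[\Delta\Phi(u)+f(t,x,u)]\mathrm{d}t$, and the stochastic integral in \eqref{eq:entropy formula-1} vanishes identically. First I would fix, once and for all, a complete filtered probability space carrying the Wiener processes $\{W^{k}\}$ (now present only as a formality) and regard the deterministic data $\xi\in L_{m+1}(\mathbb{T}^{d})$, $\psi$ and $f$ as $\omega$-independent. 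Then I would check the structural hypotheses: $\Phi(r)=|r|^{m-1}r$ satisfies Assumption~\ref{assu:assumption for phi} --- its displayed inequalities involve only $\sqrt{\Phi'}(r)=\sqrt m\,|r|^{(m-1)/2}$ and $\llbracket\sqrt{\Phi'}\rrbracket$ and are verified by a direct computation, while the $C^{2}$-regularity at the origin, which is the only delicate point (it fails when $m$ is small), is never actually used: the construction in Sections~\ref{sec:Approximation}--\ref{sec:Existence} runs through the smooth approximants $\Phi_{n}$ of Lemma~\ref{lem:Assumptio for coef}, and the $L_{1}$-estimate of Section~\ref{sec:-stability} only invokes those inequalities. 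Assumption~\ref{assu:assumption for sigma} holds because $\sum_{k}\Vert\sigma^{k}\Vert_{C^{3}}^{2}=0\le K$ and $f$ satisfies \eqref{eq:assumption for f} by hypothesis, and the compatibility condition \eqref{eq:assumption on xi} is exactly the assumed $\xi\ge\psi(0,\cdot)$ (and $\tilde\xi\ge\psi(0,\cdot)$ in the second part).

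Second, I would identify the two entropy formulations. With $\sigma\equiv a\equiv b\equiv 0$, the entropy inequality \eqref{eq:entropy formula-1} of Definition~\ref{def:entropy solution with ob} loses its stochastic, $a$- and $b$-terms and collapses to \eqref{eq:entropy formula for deterministic PME}, because $\llbracket\Phi'\eta'\rrbracket(u)=m\int_{0}^{u}|r|^{m-1}\eta'(r)\,\mathrm{d}r$, $\llbracket\sqrt{\Phi'}\rrbracket(r)=\tfrac{2\sqrt m}{m+1}|r|^{(m-1)/2}r$, and hence $|\nabla\llbracket\sqrt{\Phi'}\rrbracket(u)|^{2}=\tfrac{4m}{(m+1)^{2}}|\nabla(|u|^{(m-1)/2}u)|^{2}$; correspondingly the regularity requirement $\llbracket\sqrt{\Phi'}\rrbracket(u)\in L_{2}(\Omega_{T};H^{1})$ becomes $|u|^{(m-1)/2}u\in L_{2}(0,T;H^{1})$. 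Since, for $\omega$-independent data, a ``predictable random field'' is just a measurable function on $\bar Q_{T}$ and a ``predictable Radon measure with $\mathbb{E}[\nu(Q_{T})]<\infty$'' is just a finite Radon measure, a deterministic pair $(u,\nu)$ is an entropy solution of $\Pi_{\psi}(\Phi,f,\xi)$ in the sense of Definition~\ref{def:entropy solution with ob} if and only if it is an entropy solution of \eqref{eq:deteministic PME} in the sense of Definition~\ref{def:entropy solution of PME}.

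For existence I would apply Theorem~\ref{thm:existence_main thm} to obtain an entropy solution $(u,\nu)$ of $\Pi_{\psi}(\Phi,f,\xi)$ and then argue that it may be taken $\omega$-independent: each approximating problem $\Pi(\Phi_{n},f+P_{\epsilon}(\cdot,\psi),\xi_{n})$ is a genuinely deterministic PDE, so by uniqueness its $L_{2}$-solution $u_{n,\epsilon}$ equals $\mathbb{P}$-almost surely the deterministic solution; the penalty terms $\nu_{n,\epsilon}=P_{\epsilon}(u_{n,\epsilon},\psi)$ and all the limits in $n$ and $\epsilon$ carried out in Section~\ref{sec:Existence} then stay deterministic, so $(u,\nu)$ is deterministic and, by the previous paragraph, solves \eqref{eq:deteministic PME}. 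For the stability estimate, with $\psi\in C^{2}_{x}(\bar Q_{T})$ I would view the given solutions $(u,\nu)$ and $(\tilde u,\tilde\nu)$ of \eqref{eq:deteministic PME} as $\omega$-independent entropy solutions of $\Pi_{\psi}(\Phi,f,\cdot)$ and invoke Theorem~\ref{thm:unque_main thm}, which yields $\underset{t\in[0,T]}{\mathrm{ess\ sup}}\,\mathbb{E}\Vert u(t)-\tilde u(t)\Vert_{L_{1}(\mathbb{T}^{d})}\le C\,\mathbb{E}\Vert\xi-\tilde\xi\Vert_{L_{1}(\mathbb{T}^{d})}$, i.e.\ the asserted bound once the trivial expectations are dropped; taking $\tilde\xi=\xi$ gives uniqueness. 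The one step that is not pure bookkeeping is the $\omega$-independence claim in the existence part, and I expect it to be the main (if mild) obstacle; it can alternatively be sidestepped by noting that uniqueness already holds within the class of deterministic entropy solutions and that the penalization scheme manifestly produces one.
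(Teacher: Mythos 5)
Your proposal is correct and takes essentially the same route as the paper: Theorem~\ref{thm:well-posedness of PME} is stated right after Definition~\ref{def:entropy solution of PME} with no separate argument, just the remark that it follows from Theorems~\ref{thm:existence_main thm} and~\ref{thm:unque_main thm} by taking $\Phi(r)=|r|^{m-1}r$ and $\sigma^{k}\equiv0$, which is exactly your specialisation. Your extra care in checking that \eqref{eq:entropy formula-1} collapses to \eqref{eq:entropy formula for deterministic PME}, and in noting that the penalisation scheme produces $\omega$-independent approximants (so that the existence result really does land in the deterministic class of Definition~\ref{def:entropy solution of PME} even when $\psi$ is only $C^{\kappa}_{x}$ and the uniqueness theorem is unavailable), are useful clarifications that the paper leaves implicit.
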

Next, we demonstrate the relation between our entropy solution and the variational solution. 
For clarity, we focus on the case $f=0$, and the following argument still works for non-zero $f$. 
Define
\begin{align*}
K^{\prime}_{\psi}:=\big{\{} v\in C([0,T];L_{m+1}(\mathbb{T}^d));\ &|v|^{m-1}v\in L_2(0,T;H^1(\mathbb{T}^d)),\\
&\ \partial_t(|v|^{m-1}v)\in L^{\frac{m+1}{m}}(Q_T),\text{ and}\ v\geq\psi\ \text{a.e. on }Q_T \big{\}}.
\end{align*}

\begin{prop}\label{prop:connection with variational solution}
Let $\psi\in C^{2}_x(\bar{Q}_{T})$ and $\xi\in L_{m+1}(\mathbb{T}^{d})$ satisfying (\ref{eq:assumption on xi}). 
The entropy solution to \eqref{eq:deteministic PME} satisfies the variational inequality \begin{align}\label{eq:weak variational inequality}
\langle\!\langle\partial_t u,(|v|^{m-1}v-|u|^{m-1}u)\tilde{\phi}\rangle\!\rangle+\int_{Q_{T}}\nabla\big(|u|^{m-1}u\big)\cdot\nabla[(|v|^{m-1}v-|u|^{m-1}u)\tilde{\phi}]\,\mathrm{d}x\mathrm{d}t\geq0,
\end{align}
for all $v\in K^\prime_\psi$ and $\tilde\phi\in H^{1}_{\infty}([0,T];\mathbb{R}_+)$ with $\tilde{\phi}(T)=0$, where
\begin{align*}  
&\langle\!\langle\partial_t u_\epsilon,(|v|^{m-1}v-|u_\epsilon|^{m-1}u_\epsilon)\tilde{\phi}\rangle\!\rangle:=
\int_{Q_T}\partial_t\tilde{\phi}\Big[\frac{1}{m+1}|u_\epsilon|^{m+1}-u_\epsilon |v|^{m-1}v\Big] \\
&\qquad\qquad-\tilde{\phi} u_\epsilon \partial_t(|v|^{m-1}v)\mathrm{d}x\mathrm{d}t+\int_{Q_T}\tilde{\phi}(0)\Big[\frac{1}{m+1}|\xi|^{m+1}-\xi |v(0)|^{m-1}v(0)\Big]\mathrm{d}x.
\end{align*}
\end{prop}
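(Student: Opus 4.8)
The plan is to obtain \eqref{eq:weak variational inequality} by passing to the limit in the variational inequality satisfied by the penalization scheme. In the deterministic, noise‑free setting (taking $f=0$ as in the statement, the general case being analogous) the entropy solution of \eqref{eq:deteministic PME} is, by the proof of Theorem~\ref{thm:existence_main thm}, the a.s.\ monotone limit $u=\lim_{\epsilon\downarrow0}u_\epsilon$ in $L_{m+1}(Q_T)$, where $u_\epsilon$ is the (unique, by Proposition~\ref{prop:comparison}) weak solution of $\partial_t u_\epsilon=\Delta(|u_\epsilon|^{m-1}u_\epsilon)+\nu_\epsilon$ with $\nu_\epsilon:=\epsilon^{-1}(u_\epsilon-\psi)^-\geq0$ and $u_\epsilon(0,\cdot)=\xi$. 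By Lemma~\ref{lem:more-regularity} together with Proposition~\ref{prop:estimate for u_n,epsilon} (after passing $n\to\infty$), $|u_\epsilon|^{m-1}u_\epsilon\in L_2(0,T;H^1(\mathbb{T}^d))$ with a bound uniform in $\epsilon$; since moreover $u_\epsilon\to u$ a.e.\ and strongly in $L_{m+1}(Q_T)$, one gets $|u_\epsilon|^{m-1}u_\epsilon\to|u|^{m-1}u$ in $L_1(Q_T)$ and $\nabla(|u_\epsilon|^{m-1}u_\epsilon)\rightharpoonup\nabla(|u|^{m-1}u)$ weakly in $L_2(Q_T)$, the limit being pinned down by the a.e.\ convergence.

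Next I would fix $v\in K'_\psi$ and $\tilde\phi\in H^1_\infty([0,T];\mathbb{R}_+)$ with $\tilde\phi(T)=0$, and test the penalized equation with $w_\epsilon:=(|v|^{m-1}v-|u_\epsilon|^{m-1}u_\epsilon)\tilde\phi$, which is admissible thanks to the regularity of $u_\epsilon$ and of $v\in K'_\psi$. Integrating by parts in space on the torus and in time — using $\tilde\phi(T)=0$, $u_\epsilon(0,\cdot)=\xi$, the Alt--Luckhaus chain rule $\partial_t(\tfrac{1}{m+1}|u_\epsilon|^{m+1})=|u_\epsilon|^{m-1}u_\epsilon\,\partial_t u_\epsilon$, and $\partial_t(|v|^{m-1}v)\in L^{\frac{m+1}{m}}(Q_T)$ — the parabolic term $\int_{Q_T}\partial_t u_\epsilon\,w_\epsilon$ reduces exactly to $\langle\!\langle\partial_t u_\epsilon,(|v|^{m-1}v-|u_\epsilon|^{m-1}u_\epsilon)\tilde\phi\rangle\!\rangle$ defined in the statement (with $u$ replaced by $u_\epsilon$), so that
\[
\langle\!\langle\partial_t u_\epsilon,(|v|^{m-1}v-|u_\epsilon|^{m-1}u_\epsilon)\tilde\phi\rangle\!\rangle+\int_{Q_T}\nabla(|u_\epsilon|^{m-1}u_\epsilon)\cdot\nabla\big[(|v|^{m-1}v-|u_\epsilon|^{m-1}u_\epsilon)\tilde\phi\big]\,\mathrm{d}x\mathrm{d}t=\int_{Q_T}\nu_\epsilon\,w_\epsilon\,\mathrm{d}x\mathrm{d}t .
\]
The right‑hand side is non‑negative: $\nu_\epsilon\geq0$ is supported on $\{u_\epsilon<\psi\}\subseteq\{u_\epsilon<v\}$ (since $v\geq\psi$), where $|v|^{m-1}v-|u_\epsilon|^{m-1}u_\epsilon>0$ because $r\mapsto|r|^{m-1}r$ is increasing, and $\tilde\phi\geq0$.

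Finally I would let $\epsilon\downarrow0$. Since $\tilde\phi=\tilde\phi(t)$ we have $\nabla_x\tilde\phi=0$, so the gradient term splits as $\int_{Q_T}\tilde\phi\,\nabla(|u_\epsilon|^{m-1}u_\epsilon)\cdot\nabla(|v|^{m-1}v)-\int_{Q_T}\tilde\phi\,|\nabla(|u_\epsilon|^{m-1}u_\epsilon)|^2$; the first piece converges by the weak $L_2$‑convergence of $\nabla(|u_\epsilon|^{m-1}u_\epsilon)$ tested against the fixed $L_2$‑function $\tilde\phi\,\nabla(|v|^{m-1}v)$, while weak lower semicontinuity of the $\tilde\phi$‑weighted ($\tilde\phi\geq0$) $L_2$‑seminorm gives $\liminf_\epsilon\int_{Q_T}\tilde\phi|\nabla(|u_\epsilon|^{m-1}u_\epsilon)|^2\geq\int_{Q_T}\tilde\phi|\nabla(|u|^{m-1}u)|^2$; hence $\limsup_{\epsilon\downarrow0}$ of the whole gradient term is $\leq\int_{Q_T}\nabla(|u|^{m-1}u)\cdot\nabla[(|v|^{m-1}v-|u|^{m-1}u)\tilde\phi]$. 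On the other hand each summand defining $\langle\!\langle\partial_t u_\epsilon,\cdot\rangle\!\rangle$ is continuous under the convergences $u_\epsilon\to u$ in $L_{m+1}(Q_T)$ and $|u_\epsilon|^{m+1}\to|u|^{m+1}$ in $L_1(Q_T)$ (using $\partial_t\tilde\phi\in L_\infty(0,T)$ and $|v|^{m-1}v,\partial_t(|v|^{m-1}v)\in L^{\frac{m+1}{m}}(Q_T)$), so $\langle\!\langle\partial_t u_\epsilon,(|v|^{m-1}v-|u_\epsilon|^{m-1}u_\epsilon)\tilde\phi\rangle\!\rangle\to\langle\!\langle\partial_t u,(|v|^{m-1}v-|u|^{m-1}u)\tilde\phi\rangle\!\rangle$. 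Since the sum of the two $\epsilon$‑level terms equals the non‑negative quantity $\int_{Q_T}\nu_\epsilon w_\epsilon$, taking $\limsup_{\epsilon\downarrow0}$ (the $\langle\!\langle\cdot\rangle\!\rangle$‑term converging and contributing its limit, the gradient term contributing an upper bound) yields \eqref{eq:weak variational inequality}. I expect the weak lower semicontinuity step for the $\tilde\phi$‑weighted Dirichlet energy of $|u_\epsilon|^{m-1}u_\epsilon$ — which relies on the uniform $L_2$‑bound on $\nabla(|u_\epsilon|^{m-1}u_\epsilon)$ from Lemma~\ref{lem:more-regularity} — to be the crux, with the admissibility of $w_\epsilon$ as a test function and the Alt--Luckhaus identification of the parabolic term being the remaining technical points where the regularity of the penalized solutions is used.
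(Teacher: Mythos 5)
Your proposal is correct and follows essentially the same route as the paper: derive the variational identity for the penalized solution $u_\epsilon$, test against $(|v|^{m-1}v-|u_\epsilon|^{m-1}u_\epsilon)\tilde\phi$ (made admissible via time mollification / Alt--Luckhaus), exploit $v\geq\psi$ to drop the nonnegative penalty term, and pass to the limit $\epsilon\downarrow0$ using the uniform $L_2$-bound on $\nabla(|u_\epsilon|^{m-1}u_\epsilon)$ from Lemma~\ref{lem:more-regularity} together with weak lower semicontinuity of the weighted Dirichlet energy. Your explicit use of $\nabla_x\tilde\phi=0$ and the $\tilde\phi$-weighted semicontinuity argument is a slightly cleaner bookkeeping of the same limit passage the paper performs.
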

The solution characterized by the variational inequality \eqref{eq:weak variational inequality} is called a weak solution to the obstacle problem \eqref{eq:deteministic PME} (cf.~\cite{bogelein2015obstacle}). 
The proof of Proposition \ref{prop:connection with variational solution} 
is based on the following lemma that reveals more regularity of the entropy solution $u$. 
For simplicity, we consider
$$\Phi(r):=|r|^{m-1}r.$$
\begin{lem}\label{lem:more-regularity}
Under the assumption of Proposition \ref{prop:connection with variational solution}, for each $\epsilon>0$, the entropy solution $u_\epsilon$ of penalized equation $\Pi(\Phi,P_{\epsilon}(\cdot,\psi),\xi)$ satisfies 
\begin{equation*}
\Vert\Phi(u_{\epsilon})\Vert_{L_2(0,T;H^1(\mathbb{T}^d))}\leq C(1+\Vert\xi\Vert_{L_{m+1}(\mathbb{T}^d)}),
\end{equation*}
where the constant $C$ is independent with $\epsilon$. 
Consequently, for the entropy solution $u$ to the obstacle problem $\Pi_\psi(\Phi,0,\xi)$, we have $\Phi(u)\in L_2(0,T;H^1(\mathbb{T}^d))$.
\end{lem}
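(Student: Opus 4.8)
The plan is to obtain the estimate for $u_\epsilon$ by an It\^o–type energy identity for a suitable convex functional, then pass to the limit $\epsilon\downarrow 0$ using the a priori bounds already established. More precisely, I would apply It\^o's formula (as in the proof of Proposition~\ref{prop:estimate for u_n,epsilon}, using \cite[Lemma 2]{dareiotis2015boundedness} or \cite{krylov2013relatively}) to the functional $v\mapsto\int_x \int_0^{v}\Phi(r)\,\mathrm{d}r$ evaluated at $u_\epsilon$. Since $f=0$ here, the equation is $\partial_t u_\epsilon=\Delta\Phi(u_\epsilon)+\epsilon^{-1}(u_\epsilon-\psi)^-$, and there is no Stratonovich correction term (the deterministic case), so the computation simplifies considerably compared with Proposition~\ref{prop:estimate for u_n,epsilon}. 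The leading term produces exactly $-\int_0^t\!\int_x|\nabla\Phi(u_\epsilon)|^2$, and the penalty term contributes $\int_0^t\!\int_x \epsilon^{-1}(u_\epsilon-\psi)^-\,\Phi(u_\epsilon)$.

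The key step is to control this penalty contribution. On the set $\{u_\epsilon\ge\psi\}$ the term vanishes, so we only integrate over $\{u_\epsilon<\psi\}$. There, by assumption $\psi\in C^2_x(\bar Q_T)$ and $\xi\ge\psi(0,\cdot)$, we may assume $\psi$ is bounded, say $\psi\le M$; on $\{u_\epsilon<\psi\}$ we then have $u_\epsilon<M$, hence $\Phi(u_\epsilon)=|u_\epsilon|^{m-1}u_\epsilon\le \Phi(M)$, a constant, while $\epsilon^{-1}(u_\epsilon-\psi)^-\ge0$. Therefore
\[
\int_0^t\!\int_x \epsilon^{-1}(u_\epsilon-\psi)^-\,\Phi(u_\epsilon)\,\mathrm{d}x\mathrm{d}s \le \Phi(M)\int_0^t\!\int_x \epsilon^{-1}(u_\epsilon-\psi)^-\,\mathrm{d}x\mathrm{d}s,
\]
and the right-hand side is bounded by $C(1+\|\xi\|_{L_{m+1}(\mathbb{T}^d)}^{m+1})$ uniformly in $\epsilon$ by the last term on the left of estimate \eqref{eq:priori estimate1_u_=00005Cepsilon} in Lemma~\ref{lem:exist for u_=00005Cepsilon} (or the analogous deterministic bound). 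Combining this with the nonnegativity of $\int_x\int_0^{u_\epsilon(t)}\Phi(r)\,\mathrm{d}r$ and the bound $\int_x\int_0^{\xi}\Phi(r)\,\mathrm{d}r\le C\|\xi\|_{L_{m+1}}^{m+1}$, we get
\[
\|\Phi(u_\epsilon)\|_{L_2(0,T;H^1(\mathbb{T}^d))}^2 = \|\nabla\Phi(u_\epsilon)\|_{L_2(Q_T)}^2 + \|\Phi(u_\epsilon)\|_{L_2(Q_T)}^2 \le C\big(1+\|\xi\|_{L_{m+1}(\mathbb{T}^d)}^{m+1}\big),
\]
where the $L_2(Q_T)$ part of the norm is controlled via $|\Phi(u_\epsilon)|^2=|u_\epsilon|^{2m}$ together with the uniform bound on $\mathbb{E}\sup_{t\le T}\|u_\epsilon(t)\|_{L_{m+1}}^{m+1}$ and interpolation, or more simply by noting $\|\Phi(u_\epsilon)\|_{L_{(m+1)/m}(Q_T)}\le\|u_\epsilon\|_{L_{m+1}(Q_T)}^m$ and upgrading via the gradient bound and Poincaré/Sobolev on the torus.

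The main obstacle I anticipate is the careful justification of the It\^o/chain-rule identity for the functional $\int_0^{u_\epsilon}\Phi(r)\,\mathrm{d}r$ at the low regularity available: one only knows $u_\epsilon,\Phi(u_\epsilon)\in L_2(0,T;H^1(\mathbb{T}^d))$ and $\partial_t u_\epsilon$ in a negative-order space, so the identity must be obtained by a mollification argument (mollify the equation in $x$, apply the classical chain rule to $u_\epsilon^{(\gamma)}$, and pass $\gamma\to0$), exactly as is done for the term $\mathbb{E}\|\nabla\Phi_n(u_{n,\epsilon})\|_{L_2(Q_T)}^2$ in the proof of Proposition~\ref{prop:estimate for u_n,epsilon}. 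Once the $\epsilon$-uniform bound is in hand, the final assertion for the obstacle-problem solution $u$ follows immediately: since $u_\epsilon\uparrow u$ strongly in $L_{m+1}(Q_T)$ (hence $\Phi(u_\epsilon)\to\Phi(u)$ in $L_{(m+1)/m}(Q_T)$, and along a subsequence a.e.), the sequence $\Phi(u_\epsilon)$ is bounded in the Hilbert space $L_2(0,T;H^1(\mathbb{T}^d))$, so it has a weakly convergent subsequence whose limit must coincide with $\Phi(u)$ by uniqueness of limits; weak lower semicontinuity of the norm then gives $\Phi(u)\in L_2(0,T;H^1(\mathbb{T}^d))$ with the same bound. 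This completes the proof.
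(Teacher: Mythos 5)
Your overall plan (energy identity for $\int_x\int_0^{u}\Phi(r)\,\mathrm{d}r$, crude bound on the penalty contribution by $\Phi(M)\epsilon^{-1}\|(u-\psi)^-\|_{L_1}$, then weak compactness as $\epsilon\downarrow 0$) is in the right spirit, and the last paragraph (pass to the limit of $\Phi(u_\epsilon)$ via weak compactness and identification of the limit) is essentially what the paper does for the final step. But there is a real gap in the main step, and the paper routes around it differently.

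The gap is circularity in the claimed a priori regularity. You write that ``one only knows $u_\epsilon,\Phi(u_\epsilon)\in L_2(0,T;H^1(\mathbb{T}^d))$'' for the entropy solution $u_\epsilon$ of the penalized equation; but this is precisely what the lemma is trying to prove, not something available beforehand. At the $\epsilon$-level, $u_\epsilon$ is only known to be an \emph{entropy} solution (Lemma~\ref{lem:exist for u_=00005Cepsilon}), with the regularity $u_\epsilon\in L_{m+1}$ and $\llbracket\sqrt{\Phi^{\prime}}\rrbracket(u_\epsilon)\in L_2(0,T;H^1)$; neither $u_\epsilon\in L_2(0,T;H^1)$ nor $\nabla\Phi(u_\epsilon)\in L_2$ is available. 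Since $\Phi^{\prime}(0)=0$ (degenerate), you cannot convert $\nabla\llbracket\sqrt{\Phi^{\prime}}\rrbracket(u_\epsilon)\in L_2$ into $\nabla\Phi(u_\epsilon)\in L_2$ without exactly the bound you are trying to establish. So the ``mollify and apply the chain rule'' step has nothing to start from; the scheme you cite in Proposition~\ref{prop:estimate for u_n,epsilon} works there only because $u_{n,\epsilon}$ is an $L_2$-\emph{solution} with $u_{n,\epsilon},\Phi_n(u_{n,\epsilon})\in L_2(\Omega_T;H^1)$ by definition and $\Phi_n$ is smooth with bounded derivatives.

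The paper avoids this by never touching the $\epsilon$-level directly. It quotes the already-proved uniform bound $\|\nabla\Phi_n(u_{n,\epsilon})\|_{L_2(Q_T)}\le C(1+\|\xi\|_{L_{m+1}}^{m+1})$ from Proposition~\ref{prop:estimate for u_n,epsilon} (which is precisely your It\^o computation, but performed legitimately at the $n$-level), upgrades it to an $L_2(0,T;H^1)$ bound via Poincar\'e and the $L_{m+1}$ bound on $u_{n,\epsilon}$, extracts a weak $L_2(0,T;H^1)$ limit $w_\epsilon$ of $\Phi_n(u_{n,\epsilon})$ as $n\to\infty$, and identifies $w_\epsilon=\Phi(u_\epsilon)$ by proving $\Phi_n(u_{n,\epsilon})\to\Phi(u_\epsilon)$ in $L_{(m+2)/(m+1)}(Q_T)$ (a.e.\ convergence plus uniform integrability of $|u_{n,\epsilon}|^q$, $q<m+1$, combined with an explicit pointwise bound on $|\Phi_n-\Phi|$ from Lemma~\ref{lem:Assumptio for coef}). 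Your penalty estimate and the final $\epsilon\downarrow 0$ argument then carry over unchanged. So the fix is to replace your direct It\^o step at the $\epsilon$-level by the paper's two-stage passage ($n\to\infty$ with weak compactness, then $\epsilon\downarrow 0$).
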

\begin{proof}
Since the entropy solution $u_\epsilon$ is approximated by the solution $u_{n,\epsilon}$, we first consider the approximating penalized equation $\Pi(\Phi_n,P_\epsilon(\cdot,\psi),\xi_n)$,  where $\Phi_n$ is the approximation of $\Phi$ defined in Lemma \ref{lem:Assumptio for coef}.  From a priori estimate \eqref{eq:priori estimate2} and Poincar\'e inequality, we have
\begin{align*}
\Vert\Phi_n(u_{n,\epsilon})\Vert_{L_2(Q_T)}&\leq \Vert\nabla\Phi_n(u_{n,\epsilon})\Vert_{L_2(Q_T)}+C\Vert\Phi_n(u_{n,\epsilon})\Vert_{L_1(Q_T)}\\
&\leq C+\Vert\nabla\Phi_n(u_{n,\epsilon})\Vert_{L_2(Q_T)} +C \Vert u_{n,\epsilon}\Vert_{L_{m+1}(Q_T)}^{m+1}\leq C + C\Vert\xi\Vert_{L_{m+1}(\mathbb{T}^d)}^{m+1},
\end{align*}
which means $\{\Phi_n(u_{n,\epsilon})\}_{n\in\mathbb{N},\epsilon>0}$ is uniformly bounded in $L_2(0,T;H^{1}(\mathbb{T}^d))$. For fixed $\epsilon$ and a subsequence, we have $\Phi_n(u_{n,\epsilon})$ weakly converges to some $w_\epsilon\in L_2(0,T;H^{1}(\mathbb{T}^d))$ when $n\rightarrow \infty$. Since it is also a weak convergence in $L_{(m+2)/{(m+1)}}(Q_T)$, in order to verify that $w_\epsilon=\Phi(u_{\epsilon})$, we only need to prove that $\Phi_n(u_{n,\epsilon})$ weakly converges to $\Phi(u_{\epsilon})$ in $L_{(m+2)/(m+1)}(Q_T)$.

Note that by taking subsequence, $u_{n,\epsilon}$ almost surely converges $u_{\epsilon}$. Using the fact that $\{|u_{n,\epsilon}|^q\}_{n\in\mathbb{N}}$ is uniformly integrable on $\Omega_T\times\mathbb{T}^d$ for all $0\leq q< {m+1}$, we have
\begin{equation}\label{eq:converge of Phi when nto inf}
\lim_{n\rightarrow\infty} \big\Vert \Phi(u_{n,\epsilon}) - \Phi(u_{\epsilon})\big\Vert_{L_{(m+2)/(m+1)}(Q_T)}=0.
\end{equation}
Moreover, since
\begin{align*}
&|\Phi_n(u_{n,\epsilon})-\Phi(u_{n,\epsilon})|\\
&\leq \Big|\int_0^{u_{n,\epsilon}}\big(\sqrt{\Phi_n^\prime(r)}-\sqrt{\Phi^\prime(r)}\big)\big(\sqrt{\Phi_n^\prime(r)}+\sqrt{\Phi^\prime(r)}\big)\mathrm{d}r\Big|\\
&\leq C\Big|\int_0^{|u_{n,\epsilon}|}\big|\sqrt{\Phi_n^\prime(r)}-\sqrt{\Phi^\prime(r)}\big|\big(1+|r|^{\frac{m-1}{2}}\big)\mathrm{d}r\Big|\\
&\leq C\Big|\int_0^{n\land|u_{n,\epsilon}|}n^{-2}\big(1+|r|^{m-1}\big)\mathrm{d}r\Big|^{\frac{1}{2}}\cdot\Big|\int_0^{|u_{n,\epsilon}|}(1+|r|^{m-1})\mathrm{d}r\Big|^{\frac{1}{2}}\\
&\quad+C\mathbf{1}_{|u_{n,\epsilon}|>n}\big(1+|u_{n,\epsilon}|^{m}\big)\\
&\leq C (n^{-1}+\mathbf{1}_{|u_{n,\epsilon}|>n})(1+|u_{n,\epsilon}|^m),
\end{align*}
we have
\begin{align}
&\Vert\Phi_n(u_{n,\epsilon})-\Phi(u_{n,\epsilon})\Vert_{L_{(m+2)/(m+1)}(Q_T)}\label{eq:converge of Phi when nto inf2}\\
&\leq C \Vert (n^{-1}+\mathbf{1}_{|u_{n,\epsilon}|>n}) (1+|u_{n,\epsilon}|^m)\Vert_{L_{(m+2)/(m+1)}(Q_T)}\stackrel{n\to\infty}{\longrightarrow} 0.\nonumber
\end{align} 
Combining \eqref{eq:converge of Phi when nto inf} and \eqref{eq:converge of Phi when nto inf2}, we have $\Phi_n(u_{n,\epsilon})$ strongly converges to $\Phi(u_{\epsilon})$ in $L_{(m+2)/(m+1)}(Q_T)$, which indicates $w_\epsilon=\Phi(u_\epsilon)$. Then, by taking subsequence, $\Phi_n(u_{n,\epsilon})$ weakly converges to $\Phi(u_\epsilon)$ in $L_2(0,T;H^1(\mathbb{T}^d))$. With a priori estimate \eqref{eq:priori estimate2}, the sequence $\{\Phi(u_{\epsilon})\}_{\epsilon>0}$ is uniformly bounded in $L_2(0,T;H^1(\mathbb{T}^d))$. When taking $\epsilon\downarrow 0$, with the above procedure, we also have $\Phi(u)\in L_2(0,T;H^1(\mathbb{T}^d))$.
\end{proof}
\begin{proof}[Proof of Proposition \ref{prop:connection with variational solution}]
We first verify that $u_\epsilon$ of the penalized equation satisfies the variational inequality \cite[(2.7)]{bogelein2015obstacle}. Taking $\eta(r)=r$ and $\eta(r)=-r$ in the entropy inequality of $u_\epsilon$ respectively to obtain that
\begin{equation*}
\begin{aligned}  & -\int_{Q_{T}}u_\epsilon\partial_{t}\phi\,\mathrm{d}x\mathrm{d}t-\int_{Q_{T}}\phi \epsilon^{-1}(u_\epsilon-\psi)^-\mathrm{d}x\mathrm{d}t\\
 & =\int_{\mathbb{T}^{d}}\xi\phi(0)\mathrm{d}x-\int_{Q_{T}}\nabla\big(|u_\epsilon|^{m-1}u_\epsilon\big)\cdot\nabla\phi\,\mathrm{d}x\mathrm{d}t.
\end{aligned}
\end{equation*}
Since $|u_\epsilon|^{m-1}u_\epsilon \in L_2 (0,T;H^1(\mathbb{T}^d))$, we can extend the test function $\phi$ to the ones in $L^2(0,T;H^1(\mathbb{T}^d))$ satisfying $\partial_t(\phi^m)\in L^{\frac{m+1}{m}}(Q_T)$ and $\phi(T,\cdot)\equiv 0$. Using time mollifications of $|u_\epsilon|^{m-1}u_\epsilon$ to taking the  test function $\phi=(|v|^{m-1}v-|u_\epsilon|^{m-1}u_\epsilon)\tilde{\phi}$, we have
\begin{equation*}
\begin{aligned}  
&\int_{Q_T}\partial_t\tilde{\phi}\Big[\frac{1}{m+1}|u_\epsilon|^{m+1}-u_\epsilon |v|^{m-1}v\Big]-\tilde{\phi} u_\epsilon \partial_t(|v|^{m-1}v)\mathrm{d}x\mathrm{d}t \\
&+\int_{Q_T}\tilde{\phi}(0)\Big[\frac{1}{m+1}|\xi|^{m+1}-\xi |v(0)|^{m-1}v(0)\Big]\mathrm{d}x\\
&=\langle\partial_t u_\epsilon,(|v|^{m-1}v-|u_\epsilon|^{m-1}u_\epsilon)\tilde{\phi}\rangle_\xi\\
&=\int_{Q_{T}}(|v|^{m-1}v-|u_\epsilon|^{m-1}u_\epsilon)\tilde{\phi} \epsilon^{-1}(u_\epsilon-\psi)^-\mathrm{d}x\mathrm{d}t\\
 &\quad-\int_{Q_{T}}\nabla\big(|u_\epsilon|^{m-1}u_\epsilon\big)\cdot\nabla[(|v|^{m-1}v-|u_\epsilon|^{m-1}u_\epsilon)\tilde{\phi}]\,\mathrm{d}x\mathrm{d}t
\end{aligned}
\end{equation*}
holds for all $v\in K^\prime_\psi$ and $\tilde\phi\in W^{1,\infty}([0,T];\mathbb{R}_+)$ with $\tilde{\phi}(T)=0$.
Due to $v\in K_\psi$, we have $v\geq \psi$, which indicates
\begin{align}
&\langle\!\langle\partial_t u_\epsilon,(|v|^{m-1}v-|u_\epsilon|^{m-1}u_\epsilon)\tilde{\phi}\rangle\!\rangle\nonumber\\
&+\int_{Q_{T}}\nabla\big(|u_\epsilon|^{m-1}u_\epsilon\big)\cdot\nabla[(|v|^{m-1}v-|u_\epsilon|^{m-1}u_\epsilon)\tilde{\phi}]\,\mathrm{d}x\mathrm{d}t\geq0.\label{eq:variational for penalty equation}
\end{align}
Since $|u_\epsilon|^{m-1}u_\epsilon\in L_2(0,T;H^1(\mathbb{T}^d))$, using a priori estimate \eqref{eq:priori estimate2} and noting that $u_{n,\epsilon}$ almost surely converges to $u_{\epsilon}$ in Lemma \ref{lem:exist for u_=00005Cepsilon}, we have $\nabla(|u_\epsilon|^{m-1}u_\epsilon)$ is uniformly bounded in $L_2(Q_T)$. Similarly, using the fact $|u|^{m-1}u\in L_2(0,T;H^1(\mathbb{T}^d))$, with the almost surely convergence of $u_{\epsilon}$ in the proof of Theorem \ref{thm:existence_main thm}, there exists a subsequence $u_{\tilde{\epsilon}}$ such that $\nabla(|u_{\tilde{\epsilon}}|^{m-1}u_{\tilde{\epsilon}})$ weakly converges to $\nabla(|u|^{m-1}u)$ in $L_2(Q_T)$. Then, we have
\begin{equation*}
\underset{\tilde{\epsilon}\downarrow 0}{\underline{\lim}}\int_{Q_T}|\nabla(|u_{\tilde{\epsilon}}|^{m-1}u_{\tilde{\epsilon}})|^2\mathrm{d}x\mathrm{d}t\geq \int_{Q_T}|\nabla(|u|^{m-1}u)|^2\mathrm{d}x\mathrm{d}t.
\end{equation*}
In \eqref{eq:variational for penalty equation}, we take the limit $\tilde{\epsilon}\downarrow 0$. Based on the monotone convergence of $u_{\tilde\epsilon}$ and $u\in L_{m+1}(Q_T)$, we have \eqref{eq:weak variational inequality} holds for all $v\in K^{\prime}_\psi$ and $\tilde\phi\in H^{1}_{\infty}([0,T];\mathbb{R}_+)$ with $\tilde{\phi}(T)=0$, which completes the proof.
\end{proof}

On the other hand, it seems difficult to prove that the variational solution is an entropy solution in general. 
Nevertheless, when the obstacle function is strictly positive, 
one may show that the the strong variational solution defined in \cite[Definition 2.2]{bogelein2015obstacle} is also an entropy solution.
Let us give a brief discussion as follows.

First of all, we recall the definition of the strong solution (cf.~\cite{bogelein2015obstacle}):
letting
\[
K_{\psi}:=\{u\in C(0,T;L_{{m+1}}(\mathbb{T}^d)): u^m\in L_2(0,T;H^1(\mathbb{T}^d)),\ u\geq\psi\ \text{a.e. on}\ Q_T\}.
\]
A functions $u\in K_{\psi}$ is a strong variational solution to \eqref{eq:deteministic PME} (with $f\equiv 0$) if $\partial_t u\in L_2(0,T;H^{-1}(\mathbb{T}^d))$ and
\begin{equation}
\int_{t,x}\partial_{t} u \cdot(v^m-u^m){\phi}+\int_{t,x}\nabla u^m\cdot\nabla[(v^m-u^m){\phi}]\geq0,\label{eq:bogelein_varia_inequality}
\end{equation}
holds for all $v\in K_{\psi}$ and every cut-off function ${\phi}\in H^1_\infty([0,T],\mathbb{R}_{\geq0})$ with $\tilde{\phi}(T)=0$.

Assume that the obstacle $\psi(t,x)\ge\lambda>0$ for all $(t,x)\in Q_T$.
Firstly, we show that a strong variational solution $u$ to~\eqref{eq:deteministic PME}
satisfies the following inequality
\begin{equation}
\int_{t,x}\partial_{t}u\cdot(\eta^{\prime}(\psi)-\eta^{\prime}(u)){\phi}+\int_{t,x}\nabla u^m\cdot\nabla[(\eta^{\prime}(\psi)-\eta^{\prime}(u)){\phi}]\ge 0, \label{eq:entropy varia_inequality}
\end{equation}  
for any ${\phi}\in H^1_\infty([0,T],\mathbb{R}_{\geq0})$ with ${\phi}(T)=0$.
Indeed, to verify this from~\eqref{eq:bogelein_varia_inequality}, one can take 
\[
v^m := u^m+\rho^{-1}[\eta^{\prime}({\psi})-\eta^{\prime}(u)],
\]
and carefully select the constant $\rho$ such that $v\in K_\psi$.
With $\tilde{\rho}:=\lambda^{-1} m^{-\frac{m}{m-1}}$, one has
\begin{align*}
 v^m & \geq(u^m-\psi^m)+(\rho\tilde{\rho})^{-1}|\eta^{\prime\prime}|_{L_\infty}\Big([(\tilde{\rho}\psi)^m)]^{\frac{1}{m}}-[({\tilde{\rho}u)^m}]^{\frac{1}{m}}\Big)+\psi^m\\
  & \geq(u^m-\psi^m)+\rho^{-1}\tilde{\rho}^{m-1}\|\eta^{\prime\prime}\|_{L_\infty} (\psi^m-u^m )+\psi^m\\
 & =\big(1-\rho^{-1}\tilde{\rho}^{m-1}\|\eta^{\prime\prime}\|_{L_\infty}\big) (u^m-\psi^m)+\psi^m \\
 & \ge \psi^m,
\end{align*}
as long as $\rho\geq\tilde{\rho}^{m-1}|\eta^{\prime\prime}|_{L_\infty}=m^{-m}\lambda^{-(m-1)}|\eta^{\prime\prime}|_{L_\infty}$.

Next, letting $\nu:=\partial_t u - \Delta u^m$, one has
\begin{equation*}
-\int_{t,x}\partial_t u\phi+\int_{t,x}\phi\nu(\mathrm{d}x\mathrm{d}t)-\int_{t,x}\nabla u^m\nabla\phi=0.
\end{equation*}
Replacing the test $\phi$ by $\eta^\prime(\psi) {\phi}$, and combining this with \eqref{eq:entropy varia_inequality}, one has that
\begin{equation*}
-\int_{t,x}\eta^\prime(u)\partial_t u\phi+\int_{t,x}\eta^\prime(\psi)\phi\nu(\mathrm{d}x\mathrm{d}t)-\int_{t,x}\nabla u^m\nabla[\eta^\prime(u)\phi]\geq0,
\end{equation*}
which implies the entropy inequality~\eqref{eq:entropy formula for deterministic PME}.

\noindent\textbf{Acknowledgements} This work is supported
by the National Science and Technology Major Project (2022ZD0116401).
The first author is also supported by the National Natural Science Foundation of China (No.~12222103),
and by Shanghai Institute for Mathematics and Interdisciplinary Sciences (Grand No.~SIMIS-ID-2024-WE),
and by LMNS at Fudan University.

\noindent\textbf{Author Contributions} All authors wrote the main manuscript text and reviewed the manuscript.

\noindent\textbf{Data availability} Not applicable.

\section*{Declarations}\noindent\textbf{Conflict of interest}  On behalf of all authors, the corresponding author states that there is no conflict of interest.

\begin{appendix}
\section{Auxiliary lemma\label{sec:Auxiliary}}

In this section, we introduce a lemma, which is a version of \cite[Lemma 3.2]{dareiotis2019entropy}, but extended to the Radon measure case.

\begin{lem}
\label{lem:appro initial}Let Assumptions \ref{assu:assumption for phi}
and \ref{assu:assumption for sigma} hold. Suppose  $\xi\in L_{m+1}(\Omega,\mathcal{F}_{0};L_{m+1}(\mathbb{T}^{d}))$
and  $\psi\in C^(\bar{Q}_{T})$ satisfying (\ref{eq:assumption on xi}).
Let $\nu$
be a Radon measure on $\bar{Q}_{T}$ for almost all $\omega\in\Omega$ satisfying
$\mathbb{E}\nu(\bar{Q}_{T})<\infty$. If $u\in L_{m+1}(\Omega\times Q_{T})$
and $\nu$ satisfy the entropy inequality
(\ref{eq:entropy formula-1}), we have
\[
\lim_{\tau\downarrow0}\frac{1}{\tau}\mathbb{E}\int_{0}^{\tau}\int_{\mathbb{T}^{d}}|u(t,x)-\xi(x)|^{2}\mathrm{d}x\mathrm{d}t=0.
\]
\end{lem}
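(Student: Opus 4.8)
\emph{Strategy.} The plan is to expand
$\|u(t,\cdot)-\xi\|_{L_2(\mathbb{T}^d)}^2=\|u(t,\cdot)\|_{L_2}^2-2\langle u(t,\cdot),\xi\rangle_{L_2}+\|\xi\|_{L_2}^2$
and to control the three time-averaged contributions separately, using the entropy inequality \eqref{eq:entropy formula-1} with a truncated quadratic convex entropy for the first term and with the linear entropies $\eta(r)=\pm r$ for the second. Throughout I would test against the spatially homogeneous cut-off $\phi(t,x)=\varphi_\tau(t)\varrho(x)$, where $\varrho\in C^\infty(\mathbb{T}^d)$ and $\varphi_\tau\in C_c^\infty([0,T))$ (after a routine smoothing) is equal to $1$ at $t=0$, affine on $[0,\tau]$, $\varphi_\tau(\tau)=0$ and $\varphi_\tau\equiv0$ on $[\tau,T)$, so that $-\int_{Q_T}\eta(u)\partial_t\phi=\tfrac1\tau\int_0^\tau\int_x\eta(u)\,\varrho$ and $\phi(0)=\varrho$.

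\emph{Quadratic term.} For $R>\|\psi\|_{L_\infty(\bar Q_T)}$ fix $\eta_R\in\mathcal{E}$ with $\eta_R(r)=r^2$ on $[-R,R]$, $0\le\eta_R''\le2$ and $\mathrm{supp}\,\eta_R''\subset[-R-1,R+1]$, so that $\eta_R(r)\uparrow r^2$, $|\eta_R'|\le2(R+1)$ and $\eta_R'(\psi)=2\psi$. Apply \eqref{eq:entropy formula-1} with $\big(\eta_R,\varphi_\tau\cdot1\big)$ and take expectations: since the test function is constant in $x$, every term carrying a spatial derivative of $\phi$ disappears; the stochastic integral has zero mean; the dissipation term $-\eta_R''(u)|\nabla\llbracket\sqrt{\Phi'}\rrbracket(u)|^2\le0$ may be discarded; and the remaining first-order terms are, by Assumptions \ref{assu:assumption for phi}--\ref{assu:assumption for sigma}, dominated by a function in $L_1(\Omega_T\times\mathbb{T}^d)$ independent of $R$ (here $u\in L_{m+1}(\Omega\times Q_T)$ is used). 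What survives is the obstacle term $\mathbb{E}\int_{[0,\tau)\times\mathbb{T}^d}\eta_R'(\psi)\varphi_\tau\,\nu$. At fixed $\tau$ let $R\to\infty$ (monotone convergence for $\eta_R(u)$ and $\eta_R(\xi)$, using $\xi\in L_2$; $\eta_R'(\psi)$ is eventually $2\psi$), and then $\tau\downarrow0$: the first-order contributions vanish by dominated convergence, while, splitting $\{0\}\times\mathbb{T}^d$ off $(0,\tau]\times\mathbb{T}^d$ and using $\mathbb{E}\nu(\bar Q_T)<\infty$, the obstacle term tends to $2\,\mathbb{E}\int_{\{0\}\times\mathbb{T}^d}\psi(0,\cdot)\,\nu$. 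This yields
\[
\limsup_{\tau\downarrow0}\ \frac1\tau\,\mathbb{E}\int_0^\tau\|u(t,\cdot)\|_{L_2(\mathbb{T}^d)}^2\,\mathrm{d}t\ \le\ \mathbb{E}\|\xi\|_{L_2(\mathbb{T}^d)}^2+2\,\mathbb{E}\int_{\{0\}\times\mathbb{T}^d}\psi(0,\cdot)\,\nu(\mathrm{d}x\mathrm{d}t).
\]

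\emph{Cross term and conclusion.} Applying \eqref{eq:entropy formula-1} with $\eta(r)=r$ and with $\eta(r)=-r$ and adding (as in the proof of Proposition~\ref{prop:Uniquenessfornu}, this turns the inequality into an identity) against the test function $\varphi_\tau\varrho$, the same reasoning — now keeping the bounded spatial-derivative terms, which again appear as an $L_1$-function times $\varphi_\tau$ — gives, for each fixed $\varrho\in C^\infty(\mathbb{T}^d)$,
\[
\lim_{\tau\downarrow0}\ \frac1\tau\,\mathbb{E}\int_0^\tau\langle u(t,\cdot),\varrho\rangle_{L_2(\mathbb{T}^d)}\,\mathrm{d}t\ =\ \mathbb{E}\langle\xi,\varrho\rangle_{L_2(\mathbb{T}^d)}+\mathbb{E}\int_{\{0\}\times\mathbb{T}^d}\varrho\,\nu(\mathrm{d}x\mathrm{d}t).
\]
Now write $\|u-\xi\|_{L_2}^2=\|u-\varrho\|_{L_2}^2-\|\xi-\varrho\|_{L_2}^2-2\langle u-\xi,\xi-\varrho\rangle_{L_2}$, estimate the last inner product by Young's inequality, and combine the two displays (with $\varrho$ smooth) to get, for every $\epsilon>0$,
\[
(1-\epsilon)\limsup_{\tau\downarrow0}\frac1\tau\,\mathbb{E}\int_0^\tau\|u(t,\cdot)-\xi\|_{L_2(\mathbb{T}^d)}^2\mathrm{d}t\le(1+\epsilon^{-1})\,\mathbb{E}\|\xi-\varrho\|_{L_2(\mathbb{T}^d)}^2+2\,\mathbb{E}\int_{\{0\}\times\mathbb{T}^d}\big(\psi(0,\cdot)-\varrho\big)\,\nu(\mathrm{d}x\mathrm{d}t).
\]
Letting $\varrho\to\xi$ in $L_{m+1}(\mathbb{T}^d)$, the first term on the right vanishes and the obstacle term passes to $2\,\mathbb{E}\int_{\{0\}\times\mathbb{T}^d}(\psi(0,\cdot)-\xi)\,\nu\le0$ by the compatibility condition \eqref{eq:assumption on xi} together with $\nu\ge0$; then $\epsilon\downarrow0$ proves the claim, the reverse inequality being trivial.

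\emph{Main obstacle.} The delicate point is the compensation measure $\nu$ near the initial time: one must show that the mass $\nu$ places on $\{0\}\times\mathbb{T}^d$ enters only through the single combination above, where both the finiteness $\mathbb{E}\nu(\bar Q_T)<\infty$ (which legitimizes the limit $\tau\downarrow0$) and the sign condition $\xi\ge\psi(0,\cdot)$ (which makes the residual nonpositive) are indispensable. A secondary technical matter is justifying the passage $\eta_R\to(\cdot)^2$ and $\varrho\to\xi$, which relies only on $u\in L_{m+1}(\Omega\times Q_T)\subset L_2$ and $\xi\in L_{m+1}(\mathbb{T}^d)$ to dominate the relevant terms; none of the spatial-derivative or stochastic terms contribute in the limit, precisely because the chosen test functions are $x$-homogeneous (resp. the stochastic integrals are centered).
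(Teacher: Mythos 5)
Your route is genuinely different from the paper's: you avoid the Kruzhkov doubling in the spatial variable and instead test with $x$-independent cutoffs, expand $\|u-\xi\|_{L_2}^2$ algebraically, and control the quadratic and cross contributions separately via a truncated quadratic entropy and $\eta(r)=\pm r$. Up to the final passage this is a clean and sound alternative; in particular, the quadratic step (uniform $L_1$-domination of the first-order terms by $C(1+|u|^2)$, nonnegativity of the discarded dissipation term, centring of the stochastic integral) and the $\eta=\pm r$ identity for the cross term both check out.

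The gap is in the very last limit. You end with
\[
(1-\epsilon)\limsup_{\tau\downarrow0}\tfrac1\tau\mathbb{E}\!\int_0^\tau\|u-\xi\|_{L_2}^2
\;\le\;(1+\epsilon^{-1})\,\mathbb{E}\|\xi-\varrho\|_{L_2}^2
\;+\;2\,\mathbb{E}\!\int_{\{0\}\times\mathbb{T}^d}\bigl(\psi(0,\cdot)-\varrho\bigr)\,\nu,
\]
and claim that letting $\varrho\to\xi$ in $L_{m+1}(\mathbb{T}^d)$ sends the last integral to $2\,\mathbb{E}\int_{\{0\}}(\psi(0,\cdot)-\xi)\,\nu\le0$. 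This does not follow: the restriction of $\nu$ to $\{0\}\times\mathbb{T}^d$ is only a finite Radon measure, with no absolute continuity with respect to Lebesgue, so $L_{m+1}$-convergence $\varrho\to\xi$ gives no control on $\int_{\{0\}}\varrho\,\nu$. Worse, $\int_{\{0\}}\xi\,\nu$ is not even well defined when $\xi$ is merely an $L_{m+1}$-class and $\nu|_{\{0\}}$ is singular. The argument can be repaired, but it needs a further idea, not a mere limit: choose the approximating $\varrho$ so that $\varrho\ge\psi(0,\cdot)$ \emph{pointwise} (e.g.\ take $\varrho=(\max(\psi(0,\cdot),\xi*\rho_\gamma))*\rho_\gamma+\omega_\psi(\gamma)$, using continuity of $\psi$ and $\xi\ge\psi(0,\cdot)$), which makes the obstacle term $\le 0$ before any limit is taken, leaving only $(1+\epsilon^{-1})\mathbb{E}\|\xi-\varrho\|_{L_2}^2\to0$.

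For contrast, the paper sidesteps this difficulty by design. It doubles the spatial variable with a mollifier $\varrho_\varsigma(x-y)$ and substitutes $z=\xi(y)$, so that $\xi$ appears only under a Lebesgue integral in $y$, never integrated against $\nu$. Monotonicity of $\eta_\delta'$, the compatibility $\xi(y)\ge\psi(0,y)$, and uniform continuity of $\psi$ then reduce the obstacle term to $\eta_\delta'\big(\psi(t,x)-\psi(0,y)\big)\le C\big(\varpi(h)+\varpi(\varsigma)\big)$, which is killed in the iterated limits $h,\varsigma\downarrow0$ using only $\mathbb{E}\nu(Q_T)<\infty$. That construction never isolates $\nu|_{\{0\}}$, which is precisely what your proof implicitly requires.
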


\begin{proof}
For $\varsigma>0$ and $\varrho_{\varsigma}:=\rho_{\varsigma}^{\otimes d}$,
we have
\begin{align*}
\frac{1}{h}\mathbb{E}\int_{0}^{h}\int_{\mathbb{T}^{d}}|u(t,x)-\xi(x)|^{2}\mathrm{d}x\mathrm{d}t & \leq2\mathbb{E}\int_{\mathbb{T}^{d}}|\xi(y)-\xi(x)|^{2}\varrho_{\varsigma}(x-y)\mathrm{d}x\\
 & \quad+\frac{2}{h}\mathbb{E}\int_{0}^{h}\int_{\mathbb{T}^{d}}|u(t,x)-\xi(y)|^{2}\varrho_{\varsigma}(x-y)\mathrm{d}x\mathrm{d}t.
\end{align*}
We first estimate the second term on the right-hand side for $h\in[0,T]$.
Take a decreasing non-negative function $\gamma\in C^{\infty}([0,T])$,
such that
\[
\gamma(0)=2,\quad\gamma\leq2\mathbf{1}_{[0,2h]},\quad\partial_{t}\gamma\leq-\frac{1}{h}\mathbf{1}_{[0,h]}.
\]
For each $\delta>0$, take $\eta_{\delta}\in C^{2}(\mathbb{R})$ by
\[
\eta_{\delta}(0)=\eta_{\delta}^{\prime}(0)=0,\quad\eta_{\delta}^{\prime\prime}(r)\coloneqq2\mathbf{1}_{[0,\delta^{-1})}(|r|)+(-|r|+\delta^{-1}+2)\mathbf{1}_{[\delta^{-1},\delta^{-1}+2]}(|r|).
\]
Notice that $\eta_{\delta}(r)\rightarrow r^{2}$ as $\delta\downarrow0$.
Fix $(y,z)\in\mathbb{T}^{d}\times\mathbb{R}$. Using the entropy inequality
(\ref{eq:entropy formula-1}) with $\phi(t,x)=\gamma(t)\varrho_{\varsigma}(x-y)$
and $\eta(r)=\eta_{\delta}(r-z)$, we obtain
\begin{align*}
 & -\int_{Q_T}\eta_{\delta}(u-z)\partial_{t}\gamma(t)\varrho_{\varsigma}(x-y)\mathrm{d}x\mathrm{d}t\\
 & \leq2\int_{\mathbb{T}^{d}}\eta_{\delta}(\xi-z)\varrho_{\varsigma}(x-y)\mathrm{d}x\\
 & \quad+C\int_{Q_T}(1+|u|^{m+1}+|z|^{m+1})\Big(|\partial_{x_{i}x_{j}}\varrho_{\varsigma}(x-y)|+|\partial_{x_{i}}\varrho_{\varsigma}(x-y)|+\varrho_{\varsigma}(x-y)\Big)\\
 &\quad\quad\cdot\gamma(t)\mathrm{d}x\mathrm{d}t\\
 & \quad+\int_{Q_T}\eta_{\delta}^{\prime}(\psi-z)\varrho_{\varsigma}(x-y)\gamma(t)\nu(\mathrm{d}x\mathrm{d}t)\\
 & \quad+\int_{Q_T}\Big(\eta_{\delta}^{\prime}(u-z)\varrho_{\varsigma}(x-y)\sigma_{x_{i}}^{ik}(x,u)\\
 & \quad-\llbracket\sigma_{rx_{i}}^{ik}\eta_{\delta}^{\prime}(\cdot-z)\rrbracket(x,u)\varrho_{\varsigma}(x-y)-\llbracket\sigma_{r}^{ik}\eta_{\delta}^{\prime}(\cdot-z)\rrbracket(x,u)\partial_{x_{i}}\varrho_{\varsigma}(x-y)\Big)\gamma(t)\mathrm{d}x\mathrm{d}W_{t}^{k}.
\end{align*}
Note that all the terms are continuous in $z\in\mathbb{R}$. By substituting
$z=\xi(y)$, integrating over $y\in\mathbb{T}^{d}$, and taking expectations,
with the bounds on $\gamma$, we have
\begin{align*}
 & \frac{1}{h}\mathbb{E}\int_{0}^h\int_{\mathbb{T}^{d}}\int_{\mathbb{T}^{d}}\eta_{\delta}(u-\xi(y))\varrho_{\varsigma}(x-y)\mathrm{d}y\mathrm{d}x\mathrm{d}t\\
 & \leq2\mathbb{E}\int_{\mathbb{T}^{d}}\int_{\mathbb{T}^{d}}\eta_{\delta}(\xi(x)-\xi(y))\varrho_{\varsigma}(x-y)\mathrm{d}y\mathrm{d}x\\
 & \quad+\frac{C}{\varsigma^{2}}\mathbb{E}\int_{0}^{2h}\int_{\mathbb{T}^{d}}(1+|u(t,x)|^{m+1}+|\xi(x)|^{m+1})\mathrm{d}x\mathrm{d}t\\
 & \quad+\mathbb{E}\int_{\mathbb{T}^{d}}\int_{Q_T}\eta_{\delta}^{\prime}(\psi(t,x)-\xi(y))\varrho_{\varsigma}(x-y)\gamma(t)\nu(\mathrm{d}x\mathrm{d}t)\mathrm{d}y.
\end{align*}
With the monotonicity of $\eta_{\delta}^{\prime}$, (\ref{eq:assumption on xi}), 
and $\psi\in C(\bar{Q}_T)$, we have
\begin{align*}
 & \mathbb{E}\int_{\mathbb{T}^{d}}\int_{Q_T}\eta_{\delta}^{\prime}(\psi(t,x)-\xi(y))\varrho_{\varsigma}(x-y)\gamma(t)\nu(\mathrm{d}x\mathrm{d}t)\mathrm{d}y\\
 & \leq\mathbb{E}\int_{\mathbb{T}^{d}}\int_{Q_T}\eta_{\delta}^{\prime}(\psi(t,x)-\psi(0,y))\varrho_{\varsigma}(x-y)\gamma(t)\nu(\mathrm{d}x\mathrm{d}t)\mathrm{d}y\\
 & \leq\mathbb{E}\int_{\mathbb{T}^{d}}\int_{Q_T}\eta_{\delta}^{\prime}(\varpi(h)+\varpi(\varsigma))\varrho_{\varsigma}(x-y)\gamma(t)\nu(\mathrm{d}x\mathrm{d}t)\mathrm{d}y\\
 & \leq C(\varpi(h)+\varpi(\varsigma))\mathbb{E}\int_{\mathbb{T}^{d}}\int_{Q_T}\varrho_{\varsigma}(x-y)\gamma(t)\nu(\mathrm{d}x\mathrm{d}t)\mathrm{d}y\\
 & \leq C(\varpi(h)+\varpi(\varsigma))\mathbb{E}\int_{\mathbb{T}^{d}}\int_{Q_T}\varrho_{\varsigma}(x-y)\nu(\mathrm{d}x\mathrm{d}t)\mathrm{d}y\\
 & =C(\varpi(h)+\varpi(\varsigma))\mathbb{E}\nu(Q_{T}),
\end{align*}
where $\varpi$ is the modulus of continuity of $\psi$. Then, taking $\delta\downarrow0$ and then $h\downarrow0$,
we have
\begin{align*}
 & \limsup_{h\downarrow0}\frac{1}{h}\mathbb{E}\int_{0}^h\int_{\mathbb{T}^{d}}\int_{\mathbb{T}^{d}}|u-\xi(y)|^{2}\varrho_{\varsigma}(x-y)\mathrm{d}y\mathrm{d}x\mathrm{d}t\\
 & \leq2\mathbb{E}\int_{\mathbb{T}^{d}}\int_{\mathbb{T}^{d}}|\xi(x)-\xi(y)|^{2}\varrho_{\varsigma}(x-y)\mathrm{d}y\mathrm{d}x+C\varpi(\varsigma)\mathbb{E}\nu(Q_{T}).
\end{align*}
Then, we have
\begin{align*}
\lim_{h\downarrow0}\frac{1}{h}\mathbb{E}\int_{0}^{h}\int_{\mathbb{T}^{d}}|u(t,x)-\xi(x)|^{2}\mathrm{d}x\mathrm{d}t & \leq4\mathbb{E}\int_{\mathbb{T}^{d}}|\xi(y)-\xi(x)|^{2}\varrho_{\varsigma}(x-y)\mathrm{d}x\\
 & \quad+C\varpi(\varsigma)\mathbb{E}\nu(Q_{T}).
\end{align*}
With the continuity of translations in $L_{2}(\mathbb{T}^{d})$, the
right-hand side goes to $0$ as $\varsigma\downarrow0$.
\end{proof}
When $\nu\equiv0$, we have the following corollary.
\begin{cor}
\label{cor:appro initial_without ob}Let Assumptions \ref{assu:assumption for phi}
and \ref{assu:assumption for sigma} hold. If $\xi\in L_{m+1}(\Omega,\mathcal{F}_{0};L_{m+1}(\mathbb{T}^{d}))$,
and $u$ is an entropy solution of $\Pi(\Phi,f,\xi)$, we have 
\[
\lim_{\tau\downarrow0}\frac{1}{\tau}\mathbb{E}\int_{0}^{\tau}\int_{\mathbb{T}^{d}}|u(t,x)-\xi(x)|^{2}\mathrm{d}x\mathrm{d}t=0.
\]
\end{cor}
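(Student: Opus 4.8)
The plan is to obtain Corollary~\ref{cor:appro initial_without ob} as the special case $\nu\equiv0$ of Lemma~\ref{lem:appro initial}, or more precisely of its proof, since the statement of that lemma is phrased for the obstacle problem.

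First I would observe that if $u$ is an entropy solution of the Cauchy problem $\Pi(\Phi,f,\xi)$, then $u$ paired with the zero measure $\nu\equiv0$ satisfies the entropy inequality~\eqref{eq:entropy formula-1}: the term $-\int_{Q_T}\eta'(\psi)\phi\,\nu(\mathrm{d}x\mathrm{d}t)$ disappears, and what remains is exactly the entropy inequality for $\Pi(\Phi,f,\xi)$ in the sense of \cite{dareiotis2020nonlinear,du2024entropy}. In particular $u\in L_{m+1}(\Omega\times Q_T)$ and $\llbracket\sqrt{\Phi'}\rrbracket(u)\in L_2(\Omega_T;H^1(\mathbb{T}^d))$, so every ingredient used in the proof of Lemma~\ref{lem:appro initial} is available. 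The only places in that proof where the obstacle hypotheses $\psi\in C(\bar Q_T)$ and~\eqref{eq:assumption on xi} enter is in estimating the term $\int_{\mathbb{T}^d}\int_{Q_T}\eta_\delta'(\psi(t,x)-\xi(y))\varrho_\varsigma(x-y)\gamma(t)\,\nu(\mathrm{d}x\mathrm{d}t)\,\mathrm{d}y$; for $\nu\equiv0$ this term is absent, so those hypotheses may be dropped and the weaker assumptions of the corollary suffice.

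It then remains to re-run the argument of Lemma~\ref{lem:appro initial} with $\nu\equiv0$. Concretely: for $h\in(0,T]$ pick a decreasing $\gamma\in C^\infty([0,T])$ with $\gamma(0)=2$, $\gamma\le 2\mathbf{1}_{[0,2h]}$ and $\partial_t\gamma\le-h^{-1}\mathbf{1}_{[0,h]}$, and the family $\eta_\delta\in C^2(\mathbb{R})$ approximating $r\mapsto r^2$ as in that proof; apply~\eqref{eq:entropy formula-1} (with $\nu=0$) using $\phi(t,x)=\gamma(t)\varrho_\varsigma(x-y)$ and $\eta(r)=\eta_\delta(r-z)$, substitute $z=\xi(y)$, integrate over $y\in\mathbb{T}^d$ and take expectations. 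The stochastic integral has zero mean, and by Assumptions~\ref{assu:assumption for phi}--\ref{assu:assumption for sigma} the Laplacian, second-order, first-order and zeroth-order terms are all dominated by $C\varsigma^{-2}\,\mathbb{E}\int_0^{2h}\int_{\mathbb{T}^d}(1+|u|^{m+1}+|\xi|^{m+1})\,\mathrm{d}x\mathrm{d}t$. Letting $\delta\downarrow0$ and then $h\downarrow0$ makes this quantity vanish for each fixed $\varsigma$ (since $u,\xi\in L_{m+1}(\Omega\times Q_T)$), so that $\limsup_{h\downarrow0}\frac{1}{h}\mathbb{E}\int_0^h\!\int_{\mathbb{T}^d}\!\int_{\mathbb{T}^d}|u-\xi(y)|^2\varrho_\varsigma(x-y)\le 2\,\mathbb{E}\int_{\mathbb{T}^d}\!\int_{\mathbb{T}^d}|\xi(x)-\xi(y)|^2\varrho_\varsigma(x-y)$. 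Combining this with $|u-\xi(x)|^2\le 2|u-\xi(y)|^2+2|\xi(x)-\xi(y)|^2$ and letting $\varsigma\downarrow0$, using continuity of translations in $L_2(\mathbb{T}^d)$, yields the conclusion.

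I do not expect a genuine obstacle here: the content is entirely contained in Lemma~\ref{lem:appro initial}. The one point to check with care is that removing the compensation measure really does eliminate any need for the regularity of $\psi$ and for~\eqref{eq:assumption on xi}, and that each of the remaining terms — in particular the second-order flux terms, which carry no sign and admit no cancellation — is still controlled by an expression of the form $C(\varsigma)\,\mathbb{E}\int_0^{2h}(\cdots)$ that tends to $0$ as $h\downarrow0$.
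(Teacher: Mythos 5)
Your proposal matches the paper's approach exactly: the paper gives no separate proof of the corollary but simply remarks that it is the case $\nu\equiv0$ of Lemma~\ref{lem:appro initial}, which is precisely your observation. Your additional checks — that the hypotheses $\psi\in C(\bar Q_T)$ and~\eqref{eq:assumption on xi} enter only through the $\nu$-term and hence can be dropped, and that the remaining terms are still controlled by $C(\varsigma)\,\mathbb{E}\int_0^{2h}(1+|u|^{m+1}+|\xi|^{m+1})\to0$ as $h\downarrow0$ — are correct and faithfully reproduce the proof of the lemma.
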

\end{appendix}

\bibliographystyle{plain}
\bibliography{bi}

\end{document}